 \newtheorem{theorem}{Theorem}[section]
 \newtheorem{corollary}[theorem]{Corollary}
 \newtheorem{lemma}[theorem]{Lemma}
 \numberwithin{equation}{section}
\newcommand{\C}{\mathbb{C}}
\newcommand{\N}{\mathbb{N}}
\newcommand{\R}{\mathbb{R}}
\newcommand{\cA}{\mathcal{A}}
\newcommand{\cB}{\mathcal{B}}
\newcommand{\cF}{\mathcal{F}}
\newcommand{\cK}{\mathcal{K}}
\newcommand{\cM}{\mathcal{M}}
\newcommand{\cS}{\mathcal{S}}
\newcommand{\fA}{\mathfrak{A}}
\newcommand{\fB}{\mathfrak{B}}
\newcommand{\fC}{\mathfrak{C}}
\newcommand{\fM}{\mathfrak{M}}
\newcommand{\fS}{\mathfrak{S}}
\newcommand{\alg}{\operatorname{alg}}
\newcommand{\spe}{\operatorname{sp}}
\newcommand{\slim}{\operatornamewithlimits{s-\!\lim}_{n\to\infty}}
\newcommand{\eps}{\varepsilon}
\newcommand{\dR}{{\bf\dot{\R}}}
\begin{document}
\title[Calkin images of Fourier convolution operators]%
{{Calkin images of Fourier convolution\\
operators with slowly oscillating symbols}}
%%%----------------------------------------------------------------------------
\author{C. A. Fernandes} %, A. Yu. Karlovich}
\address{%
Centro de Matem\'atica e Aplica\c{c}\~oes,\\
Departamento de Matem\'a\-tica, \\
Faculdade de Ci\^encias e Tecnologia,\\
Universidade Nova de Lisboa,\\
Quinta da Torre, \\
2829--516 Caparica, Portugal}
\email{caf@fct.unl.pt} %, oyk@fct.unl.pt}
%%%----------------------------------------------------------------------------
\author{A. Yu. Karlovich}
\address{%
Centro de Matem\'atica e Aplica\c{c}\~oes,\\
Departamento de Matem\'a\-tica, \\
Faculdade de Ci\^encias e Tecnologia,\\
Universidade Nova de Lisboa,\\
Quinta da Torre, \\
2829--516 Caparica, Portugal}
\email{oyk@fct.unl.pt}
%%%----------------------------------------------------------------------------
\author{Yu. I. Karlovich}
\address{%
Centro de Investigaci\'on en Ciencias,\\
Instituto de Investigaci\'on en Ciencias B\'asicas y Aplicadas,\\
Universidad Aut\'onoma del Estado de Morelos,\\
Av. Universidad 1001, Col. Chamilpa,\\
C.P. 62209 Cuernavaca, Morelos, M\'exico}
\email{karlovich@uaem.mx}
%%%----------------------------------------------------------------------------
\thanks{%
This work was partially supported by the Funda\c{c}\~ao para a Ci\^encia e a
Tecnologia (Portu\-guese Foundation for Science and Technology) through the
project UID/MAT/00297/2019 (Centro de Matem\'atica e Aplica\c{c}\~oes). The
third author was also supported by the SEP-CONACYT Project A1-S-8793
(M\'exico).}
%%%----------------------------------------------------------------------------
\begin{abstract}
Let $\Phi$ be a $C^*$-subalgebra of $L^\infty(\R)$
{
and $SO_{X(\R)}^\diamond$ be the Banach algebra of slowly
oscillating Fourier multipliers on a Banach function space $X(\R)$.}
 We show that the intersection of the Calkin
image of the algebra generated by the operators of multiplication $aI$ by
functions $a\in\Phi$ and the Calkin image of the algebra generated by the
Fourier convolution operators $W^0(b)$ with symbols in
{$SO_{X(\R)}^\diamond$}
coincides with the Calkin image of the algebra
generated by the operators of multiplication by constants.
\end{abstract}
%%%----------------------------------------------------------------------------
\keywords{%
Fourier convolution operator,
Fourier multiplier,
multiplication operator,
slowly oscillating function,
Calkin algebra,
Calkin image.}
\subjclass{Primary 47G10, Secondary 42A45, 46E30}
\maketitle
%%%----------------------------------------------------------------------------
\section{Introduction}
Let $\cF:L^2(\R)\to L^2(\R)$ denote the Fourier transform
\[
(\cF f)(x):=\widehat{f}(x):=\int_\R f(t)e^{itx}\,dt,
\quad
x\in\R,
\]
and let $\cF^{-1}:L^2(\R)\to L^2(\R)$ be the inverse of $\cF$,
\[
(\cF^{-1}g)(t)=\frac{1}{2\pi}\int_\R g(x)e^{-itx}\,d x,
\quad
t\in\R.
\]
It is well known that the Fourier convolution operator
%%%
\begin{equation}\label{eq:convolution-operator}
W^0(a):=\cF^{-1}a\cF
\end{equation}
%%%
is bounded on the space $L^2(\R)$ for every $a\in L^\infty(\R)$.

Let $X(\R)$ be a Banach function space and $X'(\R)$ be its associate
space. Their technical definitions are postponed to
Section~\ref{sec:BFS}. The class of Banach function spaces is
very large. It includes Lebesgue, Orlicz, Lorentz spaces, variable
Lebesgue spaces and their weighted analogues (see, e.g., \cite{BS88,CF13}).
Let $\cB(X(\R))$ denote the Banach algebra of all bounded linear operators
acting on $X(\R)$, let $\cK(X(\R))$ be the closed two-sided ideal
of all compact operators in $\cB(X(\R))$, and let
$\cB^\pi(X(\R))=\cB(X(\R))/\cK(X(\R))$ be the Calkin algebra of the
cosets $A^\pi:=A+\cK(X(\R))$, where $A\in\cB(X(\R))$.

If $X(\R)$ is separable, then $L^2(\R)\cap X(\R)$
is dense in $X(\R)$ (see Lemma~\ref{le:density-Cc-infty} below). A function
$a\in L^\infty(\R)$ is called a Fourier multiplier on $X(\R)$ if the
convolution operator $W^0(a)$ defined by \eqref{eq:convolution-operator}
maps $L^2(\R)\cap X(\R)$ into $X(\R)$ and extends to a bounded linear
operator on $X(\R)$. The function $a$ is called the symbol of the Fourier
convolution operator $W^0(a)$. The set $\cM_{X(\R)}$ of all Fourier
multipliers on  $X(\R)$ is a unital normed algebra under pointwise
operations and the norm
\[
\left\|a\right\|_{\cM_{X(\R)}}:=\left\|W^0(a)\right\|_{\cB(X(\R))}.
\]

For a unital $C^*$-subalgeb\-ra $\Phi$ of the algebra $L^\infty(\R)$, we
consider the quotient algebra $\mathcal{MO}^\pi(\Phi)$ consisting of the
cosets
\[
[aI]^\pi:=aI+\cK(X(\R))
\]
of multiplication operators by functions in $\Phi$:
\[
\mathcal{MO}^\pi(\Phi)
:=
\{[aI]^\pi: \ a\in\Phi\}
=
\{aI+\cK(X(\R)):\ a\in\Phi\}.
\]
For a unital Banach subalgebra $\Psi$ of the algebra $\cM_{X(\R)}$, we also
consider the  quotient algebra $\mathcal{CO}^\pi(\Psi)$ consisting of the
cosets
\[
[W^0(b)]^\pi:=W^0(b)+\cK(X(\R))
\]
of convolution operators with symbols
in the algebra $\Psi$:
\[
\mathcal{CO}^\pi(\Psi)
:=
\{[W^0(b)]^\pi: \ b\in\Psi\}
=
\{W^0(b)+\cK(X(\R)) :\ b\in\Psi\}.
\]

It is easy to see that $\mathcal{MO}^\pi(\Phi)$ and
$\mathcal{CO}^\pi(\Psi)$ are commutative unital
Banach subalgebras of the Calkin algebra $\cB^\pi(X(\R))$.
It is natural to refer to the algebras $\mathcal{MO}^\pi(\Phi)$
and $\mathcal{CO}^\pi(\Psi)$ as the Calkin images of the algebras
\[
\mathcal{MO}(\Phi)=\{aI:a\in\Phi\}\subset\cB(X(\R)),
\
\mathcal{CO}(\Psi)=\{W^0(b):b\in\Psi\}\subset\cB(X(\R)),
\]
respectively. The algebras $\mathcal{MO}(\Phi)$ and
$\mathcal{CO}(\Psi)$ are building blocks of the algebra of convolution
type operators
\[
\cA(\Phi,\Psi;X(\R))=
\alg_{\cB(X(\R))}\big\{aI,W^0(b):\ a\in\Phi,\ b\in\Psi\big\},
\]
the smallest closed subalgebra of $\cB(X(\R))$ that contains the
algebras $\mathcal{MO}(\Phi)$ and $\mathcal{CO}(\Psi)$.

Let $SO^\diamond$ be the $C^*$-algebra of slowly oscillating functions
and $SO_{X(\R)}^\diamond$ be the Banach algebra of all slowly oscillating
Fourier multipliers on the space $X(\R)$, which are defined below in
Sections~\ref{sec:SO-functions}--\ref{sec:SO-multipliers}.
The third author proved in \cite[Lemma~4.3]{K17} in the case
of Lebesgue spaces $L^p(\R,w)$, $1<p<\infty$, with Muckenhoupt weights
$w\in A_p(\R)$ that
%%%
\begin{equation}\label{eq:old-equality}
\mathcal{MO}^\pi(SO^\diamond)\cap\mathcal{CO}^\pi(SO_{L^p(\R,w)}^\diamond)
=
\mathcal{MO}^\pi(\C),
\end{equation}
%%%
where
%%%
\begin{equation}\label{eq:multiplications-by-constants}
\mathcal{MO}^\pi(\C):=\{[cI]^\pi:\ c\in\C\}.
\end{equation}
%%%
This result allowed him to describe the maximal ideal space
of the commutative Banach algebra
\[
\cA^\pi(SO^\diamond,SO^\diamond_{L^p(\R,w)};L^p(\R,w))
=\cA(SO^\diamond,SO^\diamond_{L^p(\R,w)};L^p(\R,w))/\cK(L^p(\R,w))
\]
(see \cite[Theorem~3.1]{K17}). In turn, this description plays a crucial role
in the study of {the Fredholmness} of operators in more general algebras
of convolution type operators with piecewise slowly oscillating data
on weighted Lebesgue space $L^p(\R,w)$
(see \cite{K17,KILH12,KILH13a}).

Recall that the {(non-centered)} Hardy-Littlewood maximal function $\cM f$
of a function $f\in L_{\rm loc}^1(\R)$ is defined by
\[
(\cM f)(x):=\sup_{I\ni x}\frac{1}{|I|}\int_I|f(y)|\,dy,
\]
where the supremum is taken over all intervals $I\subset\R$ of finite length
containing $x$. The Hardy-Littlewood maximal operator $\cM$ defined by the rule
$f\mapsto \cM f$ is a sublinear operator.

The aim of this paper is to extend \eqref{eq:old-equality} to the case of
separable Banach function spaces such that the Hardy-Littlewood maximal
operator $\cM$ is bounded on $X(\R)$ and on its associate space $X'(\R)$
{and to}
the case of arbitrary algebras of functions $\Phi\subset L^\infty(\R)$
in place of {$SO^\diamond$.}

The following statement extends \cite[Lemma~4.3]{K17}.
%%%----------------------------------------------------------------------------
\begin{theorem}[Main result]
\label{th:intersection-quotient-algebras}
Let $X(\R)$ be a separable Banach function space such that the Hardy-Littlewood
maximal operator $\cM$ is bounded on the space $X(\R)$ and on its associate
space $X'(\R)$. If $\Phi$ is a unital $C^*$-subalgebra of
{$L^\infty(\R)$, then}
%%%
\begin{equation}\label{eq:intersection-quotient-algebras-1}
\mathcal{MO}^\pi(\Phi)
\cap
\mathcal{CO}^\pi{(SO_{X(\R)}^\diamond)}
=\mathcal{MO}^\pi(\C),
\end{equation}
%%%
where $\mathcal{MO}^\pi(\C)$ is defined by
\eqref{eq:multiplications-by-constants}.
\end{theorem}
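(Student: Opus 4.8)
The plan is to dispose of the trivial inclusion first and then concentrate on the substantive one. For $\mathcal{MO}^\pi(\C)\subseteq\mathcal{MO}^\pi(\Phi)\cap\mathcal{CO}^\pi(SO_{X(\R)}^\diamond)$ I would simply observe that every constant $c$ belongs to the unital algebra $\Phi$ and that $cI=W^0(c)$ with the constant symbol $c$ lying in $SO_{X(\R)}^\diamond$; hence $[cI]^\pi$ sits in both Calkin images. All the work lies in the reverse inclusion, so I would start from $a\in\Phi$ and $b\in SO_{X(\R)}^\diamond$ with
\[
aI-W^0(b)=K\in\cK(X(\R)),
\]
and aim to prove that $a$ agrees almost everywhere with a constant $c\in\C$. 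Once this is done, the same identity reads $cI-W^0(b)=K$, so the common coset equals $[cI]^\pi$ and therefore lies in $\mathcal{MO}^\pi(\C)$.

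The device I would use is conjugation by the modulation operators $(M_\lambda f)(x):=e^{i\lambda x}f(x)$. Since the norm of a Banach function space depends only on the modulus of a function, each $M_\lambda$ is an invertible isometry of $X(\R)$ with $M_\lambda^{-1}=M_{-\lambda}$; moreover $M_{-\lambda}aIM_\lambda=aI$, while on the Fourier side $M_\lambda$ induces the frequency shift $\xi\mapsto\xi+\lambda$, so that $M_{-\lambda}W^0(b)M_\lambda=W^0\big(b(\cdot-\lambda)\big)$. I would test the displayed identity on a fixed $f$ from the dense class with $\widehat{f}\in C_c^\infty(\R)$ and let $\lambda\to+\infty$. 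The packets $M_\lambda f$ tend to $0$ weakly: for $g\in X'(\R)$ one has $fg\in L^1(\R)$ by the H\"older inequality for Banach function spaces, whence $\langle M_\lambda f,g\rangle=\widehat{fg}(\lambda)\to0$ by the Riemann--Lebesgue lemma; as $X(\R)$ is separable, its dual coincides with the associate space $X'(\R)$, so this really is weak convergence and compactness of $K$ forces $\|KM_\lambda f\|_{X(\R)}\to0$.

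The heart of the argument, and the step I expect to be the main obstacle, is to show that on these high-frequency packets $W^0(b)$ acts asymptotically as a scalar:
\[
\big\|W^0(b)M_\lambda f-b(-\lambda)M_\lambda f\big\|_{X(\R)}\to0
\qquad(\lambda\to+\infty).
\]
By the conjugation formula and the isometry of $M_\lambda$, the left-hand side equals $\big\|W^0\big(b(\cdot-\lambda)-b(-\lambda)\big)f\big\|_{X(\R)}$, and the symbol $\xi\mapsto b(\xi-\lambda)-b(-\lambda)$ vanishes at $\xi=0$. Because $\widehat{f}$ is supported in a fixed interval $[-R,R]$ and $b$ is slowly oscillating, this symbol together with its derivative tends to $0$ uniformly on $[-R,R]$ as $\lambda\to+\infty$. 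To turn smallness of the symbol into smallness of the operator norm I would invoke the Fourier multiplier estimate available on $X(\R)$ precisely because $\cM$ is bounded on $X(\R)$ and on $X'(\R)$. This is where slow oscillation and the maximal-function hypotheses are simultaneously indispensable, and controlling the multiplier norm of a smooth, compactly supported, uniformly small symbol is the delicate point.

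Finally I would combine the three limits. Writing $aIM_\lambda f=M_\lambda(af)$ and using the isometry of $M_\lambda$, the identity $aIM_\lambda f-W^0(b)M_\lambda f=KM_\lambda f$ yields $\|(a-b(-\lambda))f\|_{X(\R)}\to0$. Choosing a sequence $\lambda_n\to+\infty$ along which the bounded quantity $b(-\lambda_n)$ converges to some $c\in\C$, I obtain $\|(a-c)f\|_{X(\R)}=0$, so $a=c$ almost everywhere on the support of $f$. Letting $f$ run through the dense class shows $a=c$ almost everywhere on $\R$, which is exactly what is needed to place the common coset in $\mathcal{MO}^\pi(\C)$ and finish the proof.
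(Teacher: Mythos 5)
Your skeleton is exactly the paper's: the trivial inclusion, then from $aI=W^0(b)+K$ conjugate by the modulations $e_\lambda$ and pass to strong limits, with compactness killing $K$ and $W^0(b)$ becoming asymptotically scalar on high-frequency packets. This is the limit-operator method of Section~\ref{sec:proofs}; your Riemann--Lebesgue/weak-null argument is a correct self-contained proof of Lemma~\ref{le:LO-compact}, and replacing the paper's functional $\xi\in M_\infty(SO^\diamond)$ by a Bolzano--Weierstrass subsequential limit of the scalars $b(-\lambda_n)$ is legitimate, since the main theorem needs only one sequence, not the full statement of Theorem~\ref{th:LO-convolution-SO} parametrized over the fiber.

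The genuine gap sits in the step you yourself flag as delicate, and it is twofold. First, you manipulate $b$ as though it had pointwise values and a derivative, but a general $b\in SO^\diamond_{X(\R)}$ is by definition only an $\cM_{X(\R)}$-norm limit of finite sums $b_m=\sum_{\lambda\in F_m}b_{m,\lambda}$ with $b_{m,\lambda}\in SO_\lambda^3$; it need not be differentiable, so the assertion that ``this symbol together with its derivative tends to $0$ uniformly on $[-R,R]$'' is not meaningful for $b$ itself. The missing reduction, carried out in the proof of Theorem~\ref{th:LO-convolution-SO}, is a triangle inequality in $\cM_{X(\R)}$ exploiting translation invariance of the multiplier norm together with the embedding \eqref{eq:continuous-embedding} of Theorem~\ref{th:continuous-embedding} (to control $|b(-\lambda)-b_m(-\lambda)|$ by $\|b-b_m\|_{\cM_{X(\R)}}$), which reduces everything to the smooth generators $b_m$. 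Second, even for smooth symbols, uniform smallness of the localized symbol must be converted into smallness of its multiplier norm by a quantitative estimate that you invoke but do not supply: the paper uses Theorem~\ref{th:boundedness-convolution-SO}, i.e.\ $\|W^0(g)\|_{\cB(X(\R))}\le c_X\|g\|_{SO^3_\infty}$, and verifies via \eqref{eq:LO-convolution-SO-13}--\eqref{eq:LO-convolution-SO-19} that all terms $D_\infty^j\big(\big[b_m(\cdot+h_n)-b_m(h_n)\big]\psi\big)$ vanish; alternatively, the Stechkin-type inequality of Theorem~\ref{th:Stechkin} suffices, since for a smooth symbol $g$ supported in a fixed interval $[-R',R']$ one has $\|g\|_V\le\|g\|_{L^\infty(\R)}+2R'\|g'\|_{L^\infty(\R)}$. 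Note also that the uniform decay of the derivative on the moving window is not a consequence of slow oscillation alone: it comes from the global bound $\|D_\lambda b_{m,\lambda}\|_{L^\infty(\R)}<\infty$ built into the $SO_\lambda^3$-norm, via $|b_{m,\lambda}'(t)|\le\|D_\lambda b_{m,\lambda}\|_{L^\infty(\R)}/|t-\lambda|$. With these two ingredients supplied --- all available in the paper as Theorems~\ref{th:continuous-embedding}, \ref{th:Stechkin} and \ref{th:boundedness-convolution-SO} --- your plan closes and essentially coincides with the paper's proof.
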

%%%----------------------------------------------------------------------------
This result is one more step towards the study of Fredholm properties of
convolution type operators with discontinuous data on Banach function spaces
more general than weighted Lebesgue spaces initiated in the authors works
\cite{FKK-AFA,FKK-FS12,FKK-ISAAC19}.

One can expect, by analogy with the case of weighted Lebesgue spaces,
that, for instance,
$\cK(X(\R))\subset\cA(SO^\diamond,SO_{X(\R)}^\diamond;X(\R))$
and that the quotient algebra
\[
\cA^\pi(SO^\diamond,SO_{X(\R)}^\diamond;X(\R))
=
\cA(SO^\diamond,SO_{X(\R)}^\diamond;X(\R))/\cK(X(\R))
\]
is commutative. It seems, however, that the proofs of both hypotheses will
require tools, which are not available in the setting of general Banach
function spaces. We plan to return to these questions in a forthcoming
work, restricting ourselves to particular Banach function spaces, like
rearrangement-invariant spaces with Muckenhoupt weights or variable Lebesgue
spaces, where interpolation theorems are available.

The paper is organized as follows.
In Section~\ref{sec:preliminaries}, we collect necessary facts on Banach
function spaces and Fourier multipliers on them. Further, we recall the
definition of the $C^*$-algebra $SO^\diamond$ of slowly oscillating functions
and introduce the Banach algebra of slowly oscillating Fourier multipliers
$SO_{X(\R)}^\diamond$ on a Banach function spaces $X(\R)$.
In Section~\ref{sec:maximal-ideal-space}, we discuss the structure of the
maximal ideal
spaces $M(SO^\diamond)$ and $M(SO_{X(\R)}^\diamond)$ of the $C^*$-algebra
$SO^\diamond$ of slowly oscillating functions and the Banach algebra
$SO_{X(\R)}^\diamond$ of slowly oscillating Fourier multipliers
on a Banach function space $X(\R)$.
In particular, we show that the fibers $M_t(SO^\diamond)$
of $M(SO^\diamond)$ over the points $t\in\dR:=\R\cup\{\infty\}$
can be identified with the fibers $M_t(SO_t)$, where $SO_t$ is the
$C^*$-algebra of all bounded continuous functions on
$\dR\setminus\{t\}$ that slowly oscillate at the point $t$.
An analogous result is also obtained for the fibers of the maximal
ideal spaces of algebras of slowly oscillating
Fourier multipliers on a Banach function space $X(\R)$.
In Section~\ref{sec:maximal-ideal-spaces-Calkin-images}, we show that
the maximal ideal spaces of the algebras $\mathcal{MO}^\pi(\Phi)$
and $\mathcal{CO}^\pi(\Psi)$ are homeomorphic to the maximal ideal
spaces of the algebras $\Phi$ and $\Psi$, respectively,
where $\Phi$ is a unital $C^*$-subalgebra of $L^\infty(\R)$ and
$\Psi$ is a unital Banach subalgebra of $\cM_{X(\R)}$.
In Section~\ref{sec:proofs}, we recall the definition of a limit operator
(see \cite{RRS04} for a general theory of limit operators), as well as,
{a known fact about limit operators of compact operators
acting} on Banach function spaces. Further, we calculate the limit operators
of the Fourier convolution operator $W^0(b)$ with a slowly oscillating symbol
$b\in SO_{X(\R)}^\diamond$. Finally, gathering the above mentioned results
on limit operators, we prove Theorem~\ref{th:intersection-quotient-algebras}.
%%%----------------------------------------------------------------------------
\section{Preliminaries}\label{sec:preliminaries}
\subsection{Banach function spaces}\label{sec:BFS}
The set of all Lebesgue measurable complex-valued functions on $\R$ is denoted
by $\fM(\R)$. Let $\fM^+(\R)$ be the subset of functions in $\fM(\R)$ whose
values lie  in $[0,\infty]$. The Lebesgue measure of a measurable set
$E\subset\R$ is denoted by $|E|$ and its characteristic function is denoted
by $\chi_E$. Following \cite[Chap.~1, Definition~1.1]{BS88}, a mapping
$\rho:\fM^+(\R)\to [0,\infty]$ is called a Banach function norm if,
for all functions $f,g, f_n \ (n\in\N)$ in $\fM^+(\R)$, for all
constants $a\ge 0$, and for all measurable subsets $E$ of $\R$, the following
properties hold:
\begin{eqnarray*}
{\rm (A1)} & & \rho(f)=0  \Leftrightarrow  f=0\ \mbox{a.e.}, \quad
\rho(af)=a\rho(f), \quad
\rho(f+g) \le \rho(f)+\rho(g),\\
{\rm (A2)} & &0\le g \le f \ \mbox{a.e.} \ \Rightarrow \ \rho(g)
\le \rho(f)
\quad\mbox{(the lattice property)},
\\
{\rm (A3)} & &0\le f_n \uparrow f \ \mbox{a.e.} \ \Rightarrow \
       \rho(f_n) \uparrow \rho(f)\quad\mbox{(the Fatou property)},\\
{\rm (A4)} & & |E|<\infty \Rightarrow \rho(\chi_E) <\infty,\\
{\rm (A5)} & & |E|<\infty \Rightarrow \int_E f(x)\,dx \le C_E\rho(f)
\end{eqnarray*}
%%%%
with $C_E \in (0,\infty)$ which may depend on $E$ and $\rho$ but is
independent of $f$. When functions differing only on a set of measure zero
are identified, the set $X(\R)$ of all functions $f\in\fM(\R)$
for which $\rho(|f|)<\infty$ is called a Banach function space. For each
$f\in X(\R)$,
the norm of $f$ is defined by
\[
\left\|f\right\|_{X(\R)} :=\rho(|f|).
\]
Under the natural linear space operations and under this norm, the set
$X(\R)$ becomes a Banach space (see \cite[Chap.~1, Theorems~1.4 and~1.6]{BS88}).
If $\rho$ is a Banach function norm, its associate norm $\rho'$ is
defined on $\fM^+(\R)$ by
\[
\rho'(g):=\sup\left\{
\int_{\R} f(x)g(x)\,dx \ : \ f\in \fM^+(\R), \ \rho(f) \le 1
\right\}, \quad g\in \fM^+(\R).
\]
It is a Banach function norm itself \cite[Chap.~1, Theorem~2.2]{BS88}.
The Banach function space $X'(\R)$ determined by the Banach function norm
$\rho'$ is called the associate space (K\"othe dual) of $X(\R)$.
The associate space $X'(\R)$ is naturally identified with a subspace
of the (Banach) dual space $[X(\R)]^*$.
%%%----------------------------------------------------------------------------
\subsection{Density of nice functions in separable Banach function spaces}
As usual, let $C_0^\infty(\R)$ denote the set of all infinitely
differentiable functions with compact support.
%%%----------------------------------------------------------------------------
\begin{lemma}[{\cite[Lemma~2.1]{FKK-AFA} and \cite[Lemma~2.12(a)]{KS14}}]
\label{le:density-Cc-infty}
If $X(\R)$ is a separable Banach function space, then the sets $C_0^\infty(\R)$
and $L^2(\R)\cap X(\R)$ are dense in the space $X(\R)$.
\end{lemma}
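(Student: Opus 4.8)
The plan is to reduce the whole statement to approximating an arbitrary $f\in X(\R)$ in $X(\R)$-norm by functions in $C_0^\infty(\R)$, the point being that separability of $X(\R)$ forces the norm to be absolutely continuous, which in turn supplies a dominated convergence principle. First note that every $\varphi\in C_0^\infty(\R)$ is bounded and supported on a set of finite measure, so $|\varphi|\le\|\varphi\|_{L^\infty(\R)}\chi_{\operatorname{supp}\varphi}$ together with (A4) and the lattice property (A2) gives $\varphi\in X(\R)$; trivially $\varphi\in L^2(\R)$ as well. Hence $C_0^\infty(\R)\subset L^2(\R)\cap X(\R)\subset X(\R)$, and it suffices to prove that $C_0^\infty(\R)$ is dense in $X(\R)$: the density of the larger set $L^2(\R)\cap X(\R)$ is then immediate.

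The single input from the theory of Banach function spaces that I would use is the characterization in \cite{BS88} that a Banach function space over $(\R,dx)$ is separable if and only if its norm is absolutely continuous, and the attendant dominated convergence principle: whenever $g_k\to g$ a.e.\ and $|g_k|\le h$ a.e.\ with a fixed majorant $h\in X(\R)$, one has $\|g_k-g\|_{X(\R)}\to 0$. This is the only place where separability is used, and every subsequent approximation will be arranged so as to fit this template.

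Given $f\in X(\R)$ (finite a.e.), I would truncate both in height and in support by setting $f_n:=f\,\chi_{E_n}$, where $E_n:=\{x\in\R:\ |x|\le n\ \text{and}\ |f(x)|\le n\}$. Then $f_n\to f$ pointwise a.e.\ and $|f_n|\le|f|\in X(\R)$, so the dominated convergence principle gives $\|f_n-f\|_{X(\R)}\to 0$. Each $f_n$ is bounded and compactly supported, so the task is reduced to approximating a bounded $g$ with $\operatorname{supp}g\subset[-M,M]$ by elements of $C_0^\infty(\R)$. For this I would mollify: choosing a nonnegative $\varphi\in C_0^\infty(\R)$ supported in $[-1,1]$ with $\int_\R\varphi=1$ and writing $\varphi_\eps(x):=\eps^{-1}\varphi(x/\eps)$, the convolutions $g*\varphi_\eps$ belong to $C_0^\infty(\R)$, satisfy $|g*\varphi_\eps|\le\|g\|_{L^\infty(\R)}$, are supported in $[-M-1,M+1]$ for $0<\eps\le 1$, and converge to $g$ at every Lebesgue point, hence a.e. Taking $\eps\to 0$ along a sequence and dominating by $\|g\|_{L^\infty(\R)}\chi_{[-M-1,M+1]}\in X(\R)$, the dominated convergence principle yields $\|g*\varphi_\eps-g\|_{X(\R)}\to 0$. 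A triangle-inequality (diagonal) argument combining the two steps then produces, for each $f\in X(\R)$ and each prescribed tolerance, an element of $C_0^\infty(\R)$ within that tolerance.

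The main obstacle is not any individual estimate but the fact that one cannot rely on norm estimates for translation or convolution in a general Banach function space $X(\R)$: such operators need not even be bounded on $X(\R)$ (this is precisely why boundedness of the maximal operator is imposed elsewhere in the paper, but not here). The argument must therefore route every approximation through pointwise a.e.\ convergence together with a fixed $X(\R)$-majorant, so that absolute continuity of the norm --- equivalently, separability --- does all of the analytic work.
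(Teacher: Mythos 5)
Your proof is correct. One thing to be aware of: the paper itself contains no proof of this lemma --- it is quoted with attributions to \cite[Lemma~2.1]{FKK-AFA} and \cite[Lemma~2.12(a)]{KS14} --- so the comparison is with those cited arguments. They rest on the same two inputs you isolate, namely that separability of a Banach function space over $(\R,dx)$ is equivalent to absolute continuity of the norm (\cite[Chap.~1, Corollary~5.6]{BS88}, using separability of the Lebesgue measure space), and the dominated convergence principle for norms that are absolutely continuous (\cite[Chap.~1, Proposition~3.6]{BS88}). The difference is in the decomposition: the cited route factors through the density of simple functions in a space with absolutely continuous norm (\cite[Chap.~1, Theorem~3.11]{BS88}); since simple functions supported on sets of finite measure lie in $L^2(\R)$, the density of $L^2(\R)\cap X(\R)$ falls out immediately, and the density of $C_0^\infty(\R)$ is then obtained by smoothing characteristic functions (regularity of Lebesgue measure plus a smooth Urysohn function, again justified by a dominated a.e.-convergent sequence). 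You bypass the simple-function theorem entirely, replacing it by truncation in height and support followed by mollification, with every limit routed through a.e.\ convergence under a fixed $X(\R)$-majorant. Both arguments are sound; yours is slightly more self-contained (it needs only Corollary~5.6 and Proposition~3.6 of \cite{BS88}, at the price of invoking Lebesgue-point convergence of mollifications), whereas the cited version gets the $L^2(\R)\cap X(\R)$ statement essentially for free. Your closing observation is exactly the right diagnosis: since neither translations nor convolutions need be bounded on a general $X(\R)$, the only legitimate mechanism is pointwise convergence dominated by a fixed majorant, which is precisely what separability, via absolute continuity of the norm, supplies.
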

%%%----------------------------------------------------------------------------
Let $\cS(\R)$ be the Schwartz space of rapidly decreasing smooth functions
and let $\cS_0(\R)$ denote the set of functions $f\in\cS(\R)$ such that
their Fourier transforms $\cF f$ have compact support.
%%%----------------------------------------------------------------------------
\begin{theorem}[{\cite[Theorem~4]{FKK-ISAAC19}}]
\label{th:density-S0}
Let $X(\R)$ be a separable Banach function space such that the Hardy-Littlewood
maximal operator $\cM$ is bounded on $X(\R)$. Then the set $\cS_0(\R)$ is dense
in the space $X(\R)$.
\end{theorem}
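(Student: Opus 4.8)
The plan is to prove the density in two stages, bridging the two density results already at hand by a Fourier-side mollification. By Lemma~\ref{le:density-Cc-infty} the set $C_0^\infty(\R)$ is dense in $X(\R)$, so it suffices to approximate an arbitrary $g\in C_0^\infty(\R)$ by elements of $\cS_0(\R)$ in the norm of $X(\R)$. The point to overcome is that the Fourier transform of a nonzero function in $C_0^\infty(\R)$ is entire and hence never compactly supported, so $C_0^\infty(\R)\cap\cS_0(\R)=\{0\}$; the whole issue is therefore to smooth out $\widehat g$ by a cutoff without losing control of the $X(\R)$-norm.

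First I would fix $\varphi\in C_0^\infty(\R)$ with $\varphi(0)=1$, set $\varphi_N(x):=\varphi(x/N)$, and define
\[
g_N:=\cF^{-1}(\varphi_N\,\cF g)=W^0(\varphi_N)g.
\]
Since $\varphi_N\widehat g\in C_0^\infty(\R)$, its inverse Fourier transform $g_N$ lies in $\cS(\R)$ and has a compactly supported Fourier transform, whence $g_N\in\cS_0(\R)$. The convolution theorem identifies $g_N=\psi_N*g$, where $\psi_N:=\cF^{-1}\varphi_N$. A change of variables gives $\psi_N(t)=N\psi(Nt)$ with $\psi:=\cF^{-1}\varphi\in\cS(\R)$, and $\int_\R\psi_N(t)\,dt=(\cF\psi_N)(0)=\varphi_N(0)=1$; thus $\{\psi_N\}$ is an approximate identity generated by a Schwartz function.

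The convergence $g_N\to g$ in $X(\R)$ is where the boundedness of $\cM$ enters, and this is the main obstacle. Because $\psi\in\cS(\R)$ admits an integrable, radially decreasing majorant (e.g. a multiple of $(1+|x|)^{-2}$), the standard pointwise control of approximate identities yields a constant $C>0$ with $\sup_N|\psi_N*g|\le C\,\cM g$ everywhere; combined with $|g|\le\cM g$ this gives the uniform domination $|g_N-g|\le (C+1)\,\cM g$. Since $g\in C_0^\infty(\R)\subset X(\R)$ (by the lattice property~(A2) and (A4)) and $\cM$ is bounded on $X(\R)$, the dominating function $(C+1)\cM g$ belongs to $X(\R)$; in particular each $g_N\in X(\R)$, so that the approximation is meaningful. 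On the other hand, $g$ being continuous with compact support, the approximate-identity property gives $g_N\to g$ pointwise.

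Finally, since $X(\R)$ is separable it has absolutely continuous norm \cite{BS88}, so the dominated convergence theorem is valid in $X(\R)$: from $g_N\to g$ a.e.\ and $|g_N-g|\le(C+1)\cM g\in X(\R)$ I conclude $\|g_N-g\|_{X(\R)}\to 0$. This completes the approximation of $g$, and hence the density of $\cS_0(\R)$ in $X(\R)$. The crux is thus the interplay of three ingredients, namely the maximal-function majorant for mollifiers, the hypothesis that $\cM$ is bounded on $X(\R)$, and the dominated convergence theorem furnished by separability; none of them can be dropped.
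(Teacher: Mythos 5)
Your proposal is correct, and it follows essentially the same route as the proof of this statement, which the paper does not reprove but imports from \cite{FKK-ISAAC19}: reduce to $g\in C_0^\infty(\R)$ by Lemma~\ref{le:density-Cc-infty}, cut off $\widehat g$ by a dilated smooth bump so that $g_N=W^0(\varphi_N)g\in\cS_0(\R)$, dominate $\sup_N|\psi_N*g|$ by a constant times $\cM g\in X(\R)$, and conclude by dominated convergence in $X(\R)$, which is legitimate because separability of a Banach function space over $(\R,dx)$ is equivalent to absolute continuity of the norm \cite{BS88}. All the individual steps you invoke (the mollifier majorization by the, here non-centered and hence larger, maximal function; pointwise convergence of the approximate identity on the continuous compactly supported $g$) are standard and correctly applied, so there is nothing to fix.
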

%%%----------------------------------------------------------------------------
\subsection{Banach algebra \boldmath{$\cM_{X(\R)}$} of Fourier multipliers}
The following result plays an important role in this paper.
%%%----------------------------------------------------------------------------
\begin{theorem}[{\cite[Corollary~4.2]{KS19} and \cite[Theorem~2.4]{FKK-AFA}}]
\label{th:continuous-embedding}
Let $X(\R)$ be a separable Banach function space such that the
Hardy-Littlewood maximal operator $\cM$ is bounded on $X(\R)$ and on its
associate space $X'(\R)$. If $a\in\cM_{X(\R)}$, then
%%%
\begin{equation}\label{eq:continuous-embedding}
\|a\|_{L^\infty(\R)}\le\|a\|_{\cM_{X(\R)}}.
\end{equation}
%%%
The constant $1$ on the right-hand side of \eqref{eq:continuous-embedding}
is best possible.
\end{theorem}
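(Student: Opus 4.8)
The plan is to separate the two assertions. Sharpness of the constant $1$ is immediate: for $a\equiv 1$ one has $W^0(1)=I$, so $\|1\|_{\cM_{X(\R)}}=\|I\|_{\cB(X(\R))}=1=\|1\|_{L^\infty(\R)}$, and equality is attained. For the inequality I would first record the duality identity underlying it. With the conventions for $\cF,\cF^{-1}$, a routine Fubini/Plancherel computation shows that, for $f\in X(\R)\cap L^2(\R)$ and $g\in X'(\R)\cap L^2(\R)$ (where the two definitions of $W^0(a)$ agree),
\[
\int_\R (W^0(a)f)(x)\,g(x)\,dx=\frac{1}{2\pi}\int_\R a(\xi)\,\widehat{f}(\xi)\,\widehat{g}(-\xi)\,d\xi .
\]
Since $X'(\R)$ is norming for $X(\R)$ and H\"older's inequality for Banach function spaces holds with constant $1$, the left-hand side is at most $\|W^0(a)f\|_{X(\R)}\|g\|_{X'(\R)}\le\|a\|_{\cM_{X(\R)}}\|f\|_{X(\R)}\|g\|_{X'(\R)}$, giving a bilinear estimate for the frequency integral on the right.

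Next I would choose concentrating test functions. Fix a Lebesgue point $\xi_0\in\R$ of $a$; such points have full measure, so it suffices to bound $|a(\xi_0)|$. For an interval $I$ set $f=e^{-i\xi_0\cdot}|I|^{-1/2}\chi_I$ and $g=\overline{f}$; both lie in $L^2(\R)$ and, by (A4), in $X(\R)\cap X'(\R)$. Then $\frac{1}{2\pi}\widehat{f}(\xi)\widehat{g}(-\xi)=\frac{1}{2\pi}|\widehat{f}(\xi)|^2\ge0$, and as $|I|\to\infty$ this is the Fej\'er kernel recentred at $\xi_0$: a nonnegative approximate identity of total mass $\|f\|_{L^2(\R)}^2=1$. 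Hence, by the Fej\'er--Lebesgue theorem, the frequency integral above tends to $a(\xi_0)$. Because modulation by $e^{-i\xi_0\cdot}$ is an isometry on $X(\R)$ and on $X'(\R)$, the other factor is $\|f\|_{X(\R)}\|g\|_{X'(\R)}=|I|^{-1}\|\chi_I\|_{X(\R)}\|\chi_I\|_{X'(\R)}$.

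The main obstacle is to keep this last product bounded as $|I|\to\infty$, that is, to reconcile the frequency concentration of $\widehat{f}$ with control of $f$ in the norm of $X(\R)$, which is \emph{not} translation invariant. This is exactly where boundedness of $\cM$ on $X(\R)$ is indispensable: for $\|h\|_{X(\R)}\le1$ one has the pointwise bound $\big(\frac{1}{|I|}\int_I|h|\big)\chi_I\le\cM h$, whence by the lattice property $\big(\frac{1}{|I|}\int_I|h|\big)\|\chi_I\|_{X(\R)}\le\|\cM\|_{\cB(X(\R))}$; taking the supremum over such $h$ yields $\|\chi_I\|_{X(\R)}\|\chi_I\|_{X'(\R)}\le\|\cM\|_{\cB(X(\R))}\,|I|$. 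Writing $C_X:=\|\cM\|_{\cB(X(\R))}$ and passing to the limit in the bilinear estimate gives $|a(\xi_0)|\le C_X\|a\|_{\cM_{X(\R)}}$, and, taking the supremum over Lebesgue points, $\|a\|_{L^\infty(\R)}\le C_X\|a\|_{\cM_{X(\R)}}$.

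Finally, to remove the constant $C_X$ and reach the sharp value $1$, I would bootstrap through powers. Since $\cM_{X(\R)}$ is an algebra and $W^0(a^n)=W^0(a)^n$, applying the last estimate to $a^n$ gives
\[
\|a\|_{L^\infty(\R)}^n=\|a^n\|_{L^\infty(\R)}\le C_X\,\|a^n\|_{\cM_{X(\R)}}\le C_X\,\|a\|_{\cM_{X(\R)}}^n ;
\]
extracting $n$-th roots and letting $n\to\infty$ yields $\|a\|_{L^\infty(\R)}\le\|a\|_{\cM_{X(\R)}}$. The only genuinely non-formal points are the Fej\'er--Lebesgue convergence (a standard approximate-identity argument once $\xi_0$ is a Lebesgue point) and the estimate $\|\chi_I\|_{X(\R)}\|\chi_I\|_{X'(\R)}\lesssim|I|$, which is the step that makes essential use of the hypothesis on the Hardy--Littlewood maximal operator.
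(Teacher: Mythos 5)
Your proof is correct, but note for calibration that the paper itself offers no proof of this theorem: it is imported verbatim from \cite[Corollary~4.2]{KS19} and \cite[Theorem~2.4]{FKK-AFA}, so what you have written is a self-contained argument in the spirit of the cited literature rather than an alternative to an in-paper proof. The steps all check: the Parseval identity $\int_\R (W^0(a)f)g\,dx=\frac{1}{2\pi}\int_\R a\,\widehat f(\xi)\widehat g(-\xi)\,d\xi$ is valid for $f\in X(\R)\cap L^2(\R)$, $g\in X'(\R)\cap L^2(\R)$ with the paper's normalization; with $g=\overline f$ and $f=e^{-i\xi_0\cdot}|I|^{-1/2}\chi_I$ the frequency density $\frac{1}{2\pi}|\widehat f|^2$ is indeed a recentred Fej\'er kernel of total mass $\|f\|_{L^2(\R)}^2=1$ admitting a radially decreasing integrable majorant, so convergence to $a(\xi_0)$ at Lebesgue points is standard; and your maximal-function step is exactly right, since $\big(\frac{1}{|I|}\int_I|h|\big)\chi_I\le\cM h$ pointwise and the definition of the associate norm give $\|\chi_I\|_{X(\R)}\|\chi_I\|_{X'(\R)}\le\|\cM\|_{\cB(X(\R))}|I|$, which is the only place the hypotheses beyond separability are used. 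Two remarks. First, your argument uses only the boundedness of $\cM$ on $X(\R)$, not on $X'(\R)$ (separability enters only through the density of $L^2(\R)\cap X(\R)$, needed for $\cM_{X(\R)}$ to be well defined), so you in fact prove the inequality under formally weaker hypotheses than stated. Second, in the bootstrap you appeal to ``$\cM_{X(\R)}$ is an algebra'' for $\|a^n\|_{\cM_{X(\R)}}\le\|a\|_{\cM_{X(\R)}}^n$; within this paper that Banach-algebra property is a corollary deduced \emph{from} the present theorem, so to avoid even the appearance of circularity you should verify submultiplicativity directly --- which is elementary: for $f\in L^2(\R)\cap X(\R)$ one has $W^0(b)f\in L^2(\R)\cap X(\R)$ (it lies in $X(\R)$ by the multiplier property and in $L^2(\R)$ since $b\in L^\infty(\R)$), hence $W^0(ab)f=W^0(a)W^0(b)f$ and $\|ab\|_{\cM_{X(\R)}}\le\|a\|_{\cM_{X(\R)}}\|b\|_{\cM_{X(\R)}}$ follows by density, with no use of the embedding being proved. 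The sharpness argument via $a\equiv1$, $W^0(1)=I$, is exactly what is needed.
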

%%%----------------------------------------------------------------------------
Inequality \eqref{eq:continuous-embedding} was established earlier in
\cite[Theorem~1]{K15b} with some constant on the right-hand side
that depends on the space $X(\R)$.

Since \eqref{eq:continuous-embedding} is available, an easy adaptation of
the proof of \cite[Proposition~2.5.13]{G14} leads to the following
(we refer to the proof of \cite[Corollary~1]{K15b} for details).
%%%----------------------------------------------------------------------------
\begin{corollary}
Let $X(\R)$ be a separable Banach function space such that the
Hardy-Littlewood maximal operator $\cM$ is bounded on $X(\R)$ and on its
associate space $X'(\R)$. Then the set of Fourier multipliers
$\cM_{X(\R)}$ is a Banach algebra under pointwise operations and the norm
$\|\cdot\|_{\cM_{X(\R)}}$.
\end{corollary}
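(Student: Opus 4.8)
The plan is to establish the two features that promote the normed algebra $\cM_{X(\R)}$ to a Banach algebra: submultiplicativity of $\|\cdot\|_{\cM_{X(\R)}}$ and completeness of $\cM_{X(\R)}$. Submultiplicativity I would obtain from the operator identity $W^0(ab)=W^0(a)W^0(b)$. For $f\in L^2(\R)\cap X(\R)$ one has $W^0(b)f=\cF^{-1}b\cF f\in L^2(\R)\cap X(\R)$ (the factor $b\in L^\infty(\R)$ keeps $b\cF f$ in $L^2(\R)$, while the multiplier property of $b$ keeps the result in $X(\R)$), so that $W^0(a)W^0(b)f=\cF^{-1}ab\cF f=W^0(ab)f$. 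Since $L^2(\R)\cap X(\R)$ is dense in $X(\R)$ by Lemma~\ref{le:density-Cc-infty}, this identity extends to all of $X(\R)$, giving $ab\in\cM_{X(\R)}$ and $\|ab\|_{\cM_{X(\R)}}\le\|a\|_{\cM_{X(\R)}}\|b\|_{\cM_{X(\R)}}$.

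The heart of the matter is completeness, and this is exactly where inequality \eqref{eq:continuous-embedding} is indispensable. Given a Cauchy sequence $(a_n)$ in $\cM_{X(\R)}$, I would first invoke \eqref{eq:continuous-embedding} to see that $(a_n)$ is Cauchy in $L^\infty(\R)$ as well, hence converges in $L^\infty(\R)$ to some $a\in L^\infty(\R)$, the candidate limit. Simultaneously, since $\|a_n-a_m\|_{\cM_{X(\R)}}=\|W^0(a_n)-W^0(a_m)\|_{\cB(X(\R))}$ and $\cB(X(\R))$ is complete, the operators $W^0(a_n)$ converge in the operator norm to some $T\in\cB(X(\R))$.

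The hard part will be identifying $T$ with $W^0(a)$, since the two available convergences take place in different spaces. Fixing $f\in L^2(\R)\cap X(\R)$, on one side $W^0(a_n)f\to Tf$ in $X(\R)$, while on the other $W^0(a_n)f=\cF^{-1}(a_n\cF f)\to W^0(a)f$ in $L^2(\R)$, using $a_n\to a$ in $L^\infty(\R)$ and the boundedness of $\cF^{-1}$ on $L^2(\R)$. To reconcile these I would pass to convergence in measure: axiom (A5) yields $\int_E|g|\,dx\le C_E\|g\|_{X(\R)}$ on every set $E$ of finite measure, so convergence in $X(\R)$ and convergence in $L^2(\R)$ both force convergence in measure on every such $E$; hence $Tf$ and $W^0(a)f$, being limits in measure of the same sequence, coincide almost everywhere. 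Thus $W^0(a)$, initially defined on the dense subspace $L^2(\R)\cap X(\R)$, agrees there with $T$ and hence extends to the bounded operator $T\in\cB(X(\R))$, so by definition $a\in\cM_{X(\R)}$ with $W^0(a)=T$, and $\|a_n-a\|_{\cM_{X(\R)}}=\|W^0(a_n)-T\|_{\cB(X(\R))}\to0$. This yields completeness and hence the corollary.
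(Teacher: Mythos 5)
Your proposal is correct and follows essentially the paper's route: the paper proves this corollary by citing an ``easy adaptation'' of Grafakos's Proposition~2.5.13 (with details in the proof of Corollary~1 of \cite{K15b}), and that adaptation is precisely your argument --- inequality \eqref{eq:continuous-embedding} turns a Cauchy sequence in $\cM_{X(\R)}$ into an $L^\infty(\R)$-convergent one, completeness of $\cB(X(\R))$ supplies the operator limit $T$, and the two limits are identified on the dense subspace $L^2(\R)\cap X(\R)$ by comparing $L^2$- and $X(\R)$-convergence through convergence in measure (your use of axiom (A5) playing the role of the a.e.-subsequence step in the cited proofs). The submultiplicativity part you include is already contained in the statement, made earlier in the paper, that $\cM_{X(\R)}$ is a unital normed algebra, so the genuinely new content is completeness, which you handle correctly.
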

%%%----------------------------------------------------------------------------
\subsection{Stechkin-type inequality}
Let $V(\R)$ be the Banach algebra of all functions $a:\R\to\C$ with finite
total variation
\[
V(a):=\sup\sum_{i=1}^n|a(t_i)-a(t_{i-1})|,
\]
where the supremum is taken over all finite partitions
\[
-\infty<t_0<t_1<\dots<t_n<+\infty
\]
of the real line $\R$ and the norm in $V(\R)$ is given by
\[
\|a\|_{V}=\|a\|_{L^\infty(\R)}+V(a).
\]
%%%----------------------------------------------------------------------------
\begin{theorem}\label{th:Stechkin}
Let $X(\R)$ be a separable Banach function space such that the
Hardy-Littlewood maximal operator $\cM$ is bounded on $X(\R)$ and on its
associate space $X'(\R)$. If $a\in V(\R)$, then the convolution operator
$W^0(a)$ is bounded on the space $X(\R)$ and
%%%
\begin{equation}\label{eq:Stechkin}
\|W^0(a)\|_{\cB(X(\R))}
\le
c_{X}\|a\|_V
\end{equation}
%%%
where $c_{X}$ is a positive constant depending only on $X(\R)$.
\end{theorem}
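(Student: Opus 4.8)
\emph{Proof proposal.} The plan is to follow the classical Stechkin strategy: write a function of bounded variation as a superposition of shifted signum functions whose associated convolution operators are \emph{uniformly} bounded, and then integrate against the variation measure. Throughout I would work on a convenient dense subset of $X(\R)$ and extend by density at the end. The density of $L^2(\R)\cap X(\R)$ (Lemma~\ref{le:density-Cc-infty}) and of $\cS_0(\R)$ (Theorem~\ref{th:density-S0}) make this legitimate, the latter being especially useful because for $f\in\cS_0(\R)$ the Fourier transform $\widehat f$ has compact support.

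First I would isolate the single building block: the operator $W^0(\operatorname{sgn})$ is bounded on $X(\R)$. Indeed, up to a unimodular factor $W^0(\operatorname{sgn})$ coincides with the Cauchy singular integral operator $S_\R$ (equivalently, the Riesz projection $(I\pm S_\R)/2$ has symbol $\chi_{(0,\infty)}$), and under the hypothesis that $\cM$ is bounded on both $X(\R)$ and $X'(\R)$ the operator $S_\R$ is known to be bounded on $X(\R)$. Next I would exploit modulation invariance. Writing $U_t$ for the modulation $f(x)\mapsto e^{itx}f(x)$, which is an isometry of $X(\R)$ since the norm depends only on $|f|$, one has $W^0(\operatorname{sgn}(\cdot-t))=U_t\,W^0(\operatorname{sgn})\,U_{-t}$, so that
\[
\sup_{t\in\R}\big\|W^0(\operatorname{sgn}(\cdot-t))\big\|_{\cB(X(\R))}=\big\|W^0(\operatorname{sgn})\big\|_{\cB(X(\R))}=:C_X<\infty.
\]

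The core step is the integral representation. For $a\in V(\R)$ both one-sided limits $a(\pm\infty)$ exist, and the Stieltjes identity
\[
a(\xi)=\frac{a(+\infty)+a(-\infty)}{2}+\frac12\int_\R\operatorname{sgn}(\xi-t)\,da(t)
\]
holds for almost every $\xi\in\R$. For $f\in\cS_0(\R)$ the compact support of $\widehat f$ localizes the inner integral to a bounded $t$-interval, so multiplying by $\widehat f$, applying $\cF^{-1}$, and interchanging $\cF^{-1}$ with the (now finite) Stieltjes integral --- justified by dominated convergence together with the finiteness of the total variation measure --- yields
\[
W^0(a)f=\frac{a(+\infty)+a(-\infty)}{2}\,f+\frac12\int_\R W^0(\operatorname{sgn}(\cdot-t))f\,da(t),
\]
the integral being understood as a Bochner integral in $X(\R)$. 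Taking $X(\R)$-norms and using the uniform bound $C_X$ gives
\[
\left\|W^0(a)f\right\|_{X(\R)}\le\left(\left|\frac{a(+\infty)+a(-\infty)}{2}\right|+\tfrac12 C_X\,V(a)\right)\|f\|_{X(\R)}\le\left(\|a\|_{L^\infty(\R)}+\tfrac12 C_X\,V(a)\right)\|f\|_{X(\R)},
\]
hence $\|W^0(a)f\|_{X(\R)}\le c_X\|a\|_V\|f\|_{X(\R)}$ with $c_X:=\max\{1,\tfrac12 C_X\}$. Since $\cS_0(\R)$ is dense in $X(\R)$, the operator $W^0(a)$ extends to a bounded operator on $X(\R)$ satisfying \eqref{eq:Stechkin}.

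I expect the principal obstacle to be the rigorous justification of the operator-valued representation on a general Banach function space: establishing that $t\mapsto W^0(\operatorname{sgn}(\cdot-t))f$ is strongly measurable and Bochner integrable in $X(\R)$, and that Fourier inversion may be interchanged with the Stieltjes integral. Reducing to $f\in\cS_0(\R)$, so that only a bounded portion of the variation measure is seen, is precisely what makes this interchange tractable. The other nontrivial input is the boundedness of $S_\R$ on $X(\R)$ deduced from the maximal-operator hypotheses, which is where separability and the two-sided bound on $\cM$ are genuinely used.
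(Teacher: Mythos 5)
Your proposal is correct in outline and is essentially the classical Duduchava--Stechkin argument, but it is a genuinely different route from the paper's, because the paper does not prove Theorem~\ref{th:Stechkin} directly at all: it derives it from \cite[Theorem~4.3]{K15a}. The mechanism behind that citation differs from yours in one key respect: instead of your uniform operator bound $\sup_t\|W^0(\operatorname{sgn}(\cdot-t))\|_{\cB(X(\R))}<\infty$ plus a vector-valued integral, one bounds $(W^0(a)f)(x)$ \emph{pointwise} by $\|a\|_{L^\infty(\R)}|f(x)|+\tfrac12 V(a)(S^\Phi f)(x)$, where $S^\Phi$ is the \emph{maximally modulated} Cauchy singular integral operator $(S^\Phi f)(x)=\sup_{t\in\R}|(S(e_{-t}f))(x)|$, a Carleson-type operator whose boundedness on $X(\R)$ is extracted from the weighted Carleson--Hunt theorem by Rubio de Francia extrapolation (this is where the hypotheses on $\cM$ enter there). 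Taking the supremum inside the Stieltjes integral makes all strong-measurability and interchange issues disappear, at the price of invoking a much deeper harmonic-analytic input; your version needs only the boundedness of $S$ itself (equivalently, of the Riesz projection), which also follows from extrapolation under the same hypotheses, but you must then pay exactly where you said: in justifying the $X(\R)$-valued integral. Two remarks on that payment. First, you can avoid Bochner integration altogether via the generalized Minkowski inequality for Banach function norms, $\big\|\int|g_t|\,d|a|(t)\big\|_{X(\R)}\le\int\|g_t\|_{X(\R)}\,d|a|(t)$, valid in any Banach function space by the Lorentz--Luxemburg duality $\|h\|_{X(\R)}=\sup\{\int_\R|h|k\,dx:\|k\|_{X'(\R)}\le1\}$ (see \cite[Chap.~1]{BS88}) together with Tonelli; this is how the $L^p$ and weighted-$L^p$ proofs in \cite{D79,BKS02} run. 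Second, if you keep the Bochner integral, strong measurability does hold for $f\in\cS_0(\R)$: with $g_t:=W^0(\operatorname{sgn}(\cdot-t))f$ one has $g_t-g_s=-2\cF^{-1}[\chi_{(s,t)}\widehat f\,]$ for $s<t$, which is dominated by $C_f\min\{|t-s|,|x|^{-1}\}\le C_f'/(1+|x|)$; since $\cM\chi_{[0,1]}\in X(\R)$ implies $1/(1+|x|)\in X(\R)$, and separability gives absolute continuity of the norm, dominated convergence yields norm continuity of $t\mapsto g_t$, hence Bochner integrability against the finite measure $|da|$. With either repair your argument is complete, and it even gives the explicit constant $c_X=\max\{1,\tfrac12\|S\|_{\cB(X(\R))}\}$ in \eqref{eq:Stechkin}, which is finer than what the blanket citation records.
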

%%%----------------------------------------------------------------------------
This result follows from \cite[Theorem~4.3]{K15a}.

For Lebesgue spaces $L^p(\R)$, $1<p<\infty$, inequality~\eqref{eq:Stechkin} is
usually called Stechkin's inequality, and the constant $c_{L^p}$ is
calculated explicitly:
%%%
\begin{equation}\label{eq:constant-in-Stechkin}
c_{L^p}=\|S\|_{\cB(L^p(\R))}=\left\{\begin{array}{ccc}
\tan\left(\frac{\pi}{2p}\right) &\mbox{if}& 1<p\le 2,
\\[3mm]
\cot\left(\frac{\pi}{2p}\right) &\mbox{if}& 2\le p<\infty,
\end{array}\right.
\end{equation}
%%%
where $S$ is the Cauchy singular integral operator given by
\[
(Sf)(x):=\frac{1}{\pi i}\lim_{\eps\to 0}\int_{\R\setminus(x-\eps,x+\eps)}
\frac{f(t)}{t-x}\,dt.
\]
We refer to \cite[Theorem~2.11]{D79} for the proof of \eqref{eq:Stechkin}
in the case of Lebesgue spaces $L^p(\R)$ with $c_{L^p}=\|S\|_{\cB(L^p(\R))}$
and to \cite[Chap. 13, Theorem 1.3]{GK92} for the calculation of the norm
of $S$ given in the second equality in \eqref{eq:constant-in-Stechkin}.
For Lebesgue spaces with Muckenhoupt weights $L^p(\R,w)$, the proof of
Theorem~\ref{th:Stechkin} with $c_{L^p(w)}=\|S\|_{\cB(L^p(\R,w))}$ is
contained in \cite[Theorem~17.1]{BKS02}. Further, for variable Lebesgue
spaces $L^{p(\cdot)}(\R)$, Theorem~\ref{th:Stechkin} with
$c_{L^{p(\cdot)}}=\|S\|_{\cB(L^{p(\cdot)}(\R))}$ was obtained in
\cite[Theorem~2]{K15d}.
%%%----------------------------------------------------------------------------
\subsection{Slowly oscillating functions}\label{sec:SO-functions}
Let $\dR=\R\cup\{\infty\}$. For a set $E\subset\dR$ and a function
$f:\dR\to\C$ in $L^\infty(\R)$, let the oscillation of
$f$ over $E$ be defined by
\[
\operatorname{osc}(f,E)
:=
\operatornamewithlimits{ess\,sup}_{s,t\in E}|f(s)-f(t)|.
\]
Following \cite[Section~4]{BFK06} and
\cite[Section~2.1]{KILH12}, \cite[Section~2.1]{KILH13a},
we say that a function $f\in L^\infty(\R)$ is slowly
oscillating at a point $\lambda\in\dR$ if for every $r\in(0,1)$ or,
equivalently, for some $r\in(0,1)$, one has
\[
\begin{array}{lll}
\lim\limits_{x\to 0+}
\operatorname{osc}\big(f,\lambda+([-x,-rx]\cup[rx,x])\big)=0
&\mbox{if}& \lambda\in\R,
\\
\lim\limits_{x\to +\infty}
\operatorname{osc}\big(f,[-x,-rx]\cup[rx,x]\big)=0
&\mbox{if}& \lambda=\infty.
\end{array}
\]
For every $\lambda\in\dR$, let $SO_\lambda$ denote the $C^*$-subalgebra of
$L^\infty(\R)$ defined by
\[
SO_\lambda:=\left\{f\in C_b(\dR\setminus\{\lambda\})\ :\ f
\mbox{ slowly oscillates at }\lambda\right\},
\]
where
$C_b(\dR\setminus\{\lambda\}):=C(\dR\setminus\{\lambda\})\cap L^\infty(\R)$.

Let $SO^\diamond$ be the smallest $C^*$-subalgebra of $L^\infty(\R)$ that
contains all the $C^*$-algebras $SO_\lambda$ with $\lambda\in\dR$.
The functions in $SO^\diamond$ are called slowly oscillating functions.
%%%----------------------------------------------------------------------------
\subsection{Banach \boldmath{$SO_\lambda^3$} of three times continuously
differentiable slowly oscillating functions}
For a point $\lambda\in\dR$, let $C^3(\R\setminus\{\lambda\})$ be the set of
all three times continuously differentiable functions
$a:\R\setminus\{\lambda\}\to\C$.
Following \cite[Section~2.4]{KILH12} and \cite[Section~2.3]{KILH13a}, consider
the commutative Banach algebras
\[
SO_\lambda^3:=\left\{
a\in SO_\lambda\cap C^3(\R\setminus\{\lambda\})\ :\
\lim_{x\to\lambda}(D_\lambda^k a)(x)=0,
\ k=1,2,3
\right\}
\]
equipped with the norm
\[
\|a\|_{SO_\lambda^3}:=
\sum_{j=0}^3\frac{1}{j!}\left\|D_\lambda^ka\right\|_{L^\infty(\R)},
\]
where $(D_\lambda a)(x)=(x-\lambda) a'(x)$ for $\lambda\in\R$ and
$(D_\lambda a)(x)=xa'(x)$ for $\lambda=\infty$.
%%%----------------------------------------------------------------------------
\begin{lemma}\label{le:SO3-lambda-is-dense-in-SO-lambda}
For every $\lambda\in\dR$, the set $SO_\lambda^3$ is dense in the
$C^*$-algebra $SO_\lambda$.
\end{lemma}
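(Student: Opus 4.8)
The plan is to reduce the statement to the single point $\lambda=0$ and there to regularize a given $f\in SO_\lambda$ by averaging it along the multiplicative group of dilations. First I would dispose of the other points by a change of variable. Translation $x\mapsto x-\lambda$ carries $SO_\lambda$ isometrically onto $SO_0$ and, since $(D_\lambda a)(x)=(x-\lambda)a'(x)$ becomes $(D_0 b)(y)=yb'(y)$ under $b(y)=a(y+\lambda)$, it also carries $SO_\lambda^3$ onto $SO_0^3$; this settles every $\lambda\in\R$ once $\lambda=0$ is done. For $\lambda=\infty$ I would use the inversion $J:x\mapsto 1/x$: the map $f\mapsto f\circ J$ is a sup-norm isometric $*$-isomorphism of $SO_\infty$ onto $SO_0$, and because $D_0(a\circ J)=-(D_\infty a)\circ J$, it takes $SO_\infty^3$ onto $SO_0^3$. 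As density is a topological notion preserved by these isomorphisms, it suffices to prove that $SO_0^3$ is dense in $SO_0$.

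Fix $\varphi\in C_0^\infty(\R)$ with $\varphi\ge 0$ and $\int_\R\varphi=1$, and for $\delta>0$ define the regularization
\[
(R_\delta f)(x):=\int_\R f(xe^{-\delta\tau})\,\varphi(\tau)\,d\tau,\qquad x\in\R\setminus\{0\}.
\]
Since $xe^{-\delta\tau}$ has the sign of $x$, $R_\delta$ acts separately on the two half-lines, and after the substitution $y=xe^{-\delta\tau}$ the kernel depends smoothly on $x$; hence $R_\delta f\in C^\infty(\R\setminus\{0\})$, with $\|R_\delta f\|_{L^\infty(\R)}\le\|f\|_{L^\infty(\R)}$. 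The point of working with dilations is the clean identity, obtained by repeatedly integrating by parts in $\tau$ and using $\int_\R\varphi^{(k)}=0$,
\[
(D_0^k R_\delta f)(x)=\delta^{-k}\int_\R\big(f(xe^{-\delta\tau})-f(x)\big)\varphi^{(k)}(\tau)\,d\tau,\qquad k=1,2,3.
\]
As $x\to 0$ the arguments $xe^{-\delta\tau}$, with $\tau$ in the fixed compact set $\operatorname{supp}\varphi$, fill a shrinking multiplicative neighbourhood of $x$ lying in one half-line; its oscillation tends to $0$ by slow oscillation of $f$ at $0$, so $(D_0^k R_\delta f)(x)\to 0$. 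Thus $R_\delta f$ already satisfies the derivative conditions defining $SO_0^3$.

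Next I would check the two remaining requirements. For the approximation, $|(R_\delta f)(x)-f(x)|\le\sup_{\tau\in\operatorname{supp}\varphi}|f(xe^{-\delta\tau})-f(x)|$; since slow oscillation at $0$ together with continuity at $\infty$ makes $s\mapsto f(e^{-s})$ and $s\mapsto f(-e^{-s})$ uniformly continuous on $\R$, this supremum tends to $0$ uniformly in $x$ as $\delta\to 0$, giving $\|R_\delta f-f\|_{L^\infty(\R)}\to 0$. It remains to see that $R_\delta f\in SO_0$: boundedness and continuity on $\R\setminus\{0\}$ are clear, and since $f(x)\to f(\infty)$ as $x\to\pm\infty$ the same holds for $R_\delta f$, so $R_\delta f$ is continuous at $\infty$.

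The step I expect to be the genuine obstacle is verifying that $R_\delta f$ slowly oscillates at $0$ in the required \emph{two-sided} sense, i.e.\ that $\operatorname{osc}\big(R_\delta f,[-x,-rx]\cup[rx,x]\big)\to 0$. The within-side oscillations are controlled exactly as above, but one must also bound $|(R_\delta f)(u)-(R_\delta f)(v)|$ for $u\in[rx,x]$ and $v\in[-x,-rx]$, where $R_\delta f$ is built from $f$ on \emph{different} half-lines. The resolution is that $(R_\delta f)(u)$ and $(R_\delta f)(v)$ are multiplicative averages of $f$ over intervals shrinking to $0$, hence differ from $f(u)$ and $f(v)$ by at most the oscillation of $f$ over short intervals near $0$; combined with the two-sided slow oscillation of $f$ itself, which forces $|f(u)-f(v)|\to 0$, this yields $|(R_\delta f)(u)-(R_\delta f)(v)|\to 0$. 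The crucial feature making this work is the multiplicative \emph{locality} of $R_\delta$: it never mixes the two half-lines, so the only coupling between them is inherited from $f$. With all three properties in hand, $R_\delta f\in SO_0^3$ and $R_\delta f\to f$ in $L^\infty(\R)$, which proves the density and hence the lemma.
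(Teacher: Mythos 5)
Your route is genuinely different from the paper's: the paper settles $\lambda=\infty$ by quoting the density of $SO_\infty^\infty$ in $SO_\infty$ from \cite[Lemma~2.3]{BBK04} and then transports that result to each finite $\lambda$ via the M\"obius map $\beta_\lambda(x)=(\lambda x-1)/(x+\lambda)$, checking by chain-rule computations that the pullbacks land in $SO_\lambda^3$; you instead give a self-contained construction via a mollifier along the dilation group. Most of your argument holds up: the translation reduction for $\lambda\in\R$ is correct, and at $\lambda=0$ the operator $R_\delta$ does everything you claim --- the identity $(D_0^kR_\delta f)(x)=\delta^{-k}\int_\R(f(xe^{-\delta\tau})-f(x))\varphi^{(k)}(\tau)\,d\tau$, the vanishing of $D_0^kR_\delta f$ at $0$, the uniform convergence $R_\delta f\to f$ via uniform continuity of $s\mapsto f(\pm e^{-s})$, and the two-sided oscillation bound $\operatorname{osc}(R_\delta f,E)\le\operatorname{osc}(f,E)+2\sup_{u\in E}|(R_\delta f)(u)-f(u)|$ are all sound. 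So every finite $\lambda$ is done, and this part is arguably more elementary than the paper's, since it does not invoke \cite{BBK04}.

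The gap is at $\lambda=\infty$: your claim that $f\mapsto f\circ J$ takes $SO_\infty^3$ \emph{onto} $SO_0^3$ is false, and it is exactly surjectivity (i.e., that the $J$-pullback of an $SO_0^3$ approximant lies in $SO_\infty^3$) that your reduction uses. Since $\R\setminus\{\infty\}=\R$, the paper's definition forces $SO_\infty^3\subset C^3(\R)$: membership requires three-fold differentiability at the finite point $0$, which is the $J$-image of $\infty$, whereas $b\in SO_0^3$ carries no smoothness condition whatsoever near $\infty$. Concretely, take $b\in SO_0^3$ with $b(y)=|y|^{-5/2}$ for $|y|\ge 1$, smoothly glued to a constant near $0$; then $(b\circ J)(x)=|x|^{5/2}$ near $x=0$ is $C^2$ but not $C^3$ at $0$, so $b\circ J\notin SO_\infty^3$. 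Running $R_\delta$ directly at $\infty$ hits the same wall: the multiplicative average degenerates at the origin ($(R_\delta f)(0)=f(0)$), so $R_\delta f$ need not be differentiable at $0$. The defect is reparable: your approximant $g$ (either $b_n\circ J$ or $R_\delta f$ applied to $f\in SO_\infty$) is continuous at $0$ and $C^\infty$ on $\R\setminus\{0\}$, so replace it near $0$ by an ordinary additive mollification glued in with a smooth cutoff supported in $[-2\eta,2\eta]$; by uniform continuity of $g$ near $0$ this changes $g$ by arbitrarily little in sup-norm and affects neither membership in $SO_\infty$ nor the conditions $\lim_{x\to\infty}(D_\infty^kg)(x)=0$, which concern only a neighborhood of $\infty$. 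With that repair inserted your proof is complete; as written, the case $\lambda=\infty$ does not go through.
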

%%%----------------------------------------------------------------------------
\begin{proof}
In view of \cite[Lemma~2.3]{BBK04}, the set
%%%
\begin{equation}\label{eq:SO3-lambda-is-dense-in-SO-lambda-0}
SO_\infty^\infty:=
\left\{f\in SO_\infty\cap C_b^\infty(\R)\ :\
\lim_{x\to\infty}(D_\infty^kf)(x)=0,\ k\in\N
\right\}
\end{equation}
%%%
is dense in the Banach algebra $SO_\infty$. Here $C_b^\infty(\R)$ denotes the
set of all infinitely differentiable functions $f:\R\to\C$, which are bounded
with all their derivatives. Note that $SO_\infty^\infty$ can be
equivalently defined by replacing $C_b^\infty$ in
\eqref{eq:SO3-lambda-is-dense-in-SO-lambda-0} by $C^\infty$,
because $f\in SO_\infty$ is bounded and its derivatives $f^{(k)}$ are bounded
for all $k\in\N$ in view of $\lim_{x\to\infty}(D_\infty^kf)(x)=0$.
Since $SO_\infty^\infty\subset SO_\infty^3$, this completes the proof in the
case $\lambda=\infty$.

If $\lambda\in\R$, then by \cite[Corollary~2.2]{KILH13a}, the mapping
$Ta=a\circ\beta_\lambda$, where $\beta_\lambda:\dR\to\dR$ is defined by
%%%
\begin{equation}\label{eq:SO3-lambda-is-dense-in-SO-lambda-1}
\beta_\lambda(x)=\frac{\lambda x-1}{x+\lambda},
\end{equation}
%%%
is an isometric isomorphism of the algebra $SO_\lambda$ onto the algebra
$SO_\infty$. Hence each function $a\in SO_\lambda$ can be approximated in
the norm of $SO_\lambda$ by functions $c_n=b_n\circ\beta_\lambda^{-1}$, where
$b_n\in SO_\infty^\infty$ for $n\in\N$ and
%%%
\begin{equation}\label{eq:SO3-lambda-is-dense-in-SO-lambda-2}
\beta_\lambda^{-1}(y)=\frac{\lambda y+1}{\lambda-y}=x,
\quad x,y\in\dR.
\end{equation}
%%%
It remains to show that $c_n\in SO_\lambda^3$. Taking into account
\eqref{eq:SO3-lambda-is-dense-in-SO-lambda-1}--%
\eqref{eq:SO3-lambda-is-dense-in-SO-lambda-2}, we obtain for
$y=\beta_\lambda(x)\in\R\setminus\{\lambda\}$ and
$x=\beta_\lambda^{-1}(y)\in\R$:
%%%
\begin{align}
(D_\lambda c_n)(y)
=&
b_n'\left(\beta_\lambda^{-1}(y)\right)\frac{\lambda^2+1}{y-\lambda}
=
-b_n'(x)(x+\lambda),
\label{eq:SO3-lambda-is-dense-in-SO-lambda-3}
\\
(D_\lambda^2c_n)(y)
=&
b_n''\left(\beta_\lambda^{-1}(y)\right)\frac{(\lambda^2+1)^2}{y-\lambda}
-
b_n'\left(\beta_\lambda^{-1}(y)\right)\frac{\lambda^2+1}{y-\lambda}
\nonumber
\\
=&
-b_n''(x)(x+\lambda)(\lambda^2+1)
+b_n'(x)(x+\lambda),
\label{eq:SO3-lambda-is-dense-in-SO-lambda-4}
\end{align}
\begin{align}
(D_\lambda^3c_n)(y)
=&
b_n'''\left(\beta_\lambda^{-1}(y)\right)\frac{(\lambda^2+1)^3}{y-\lambda}
-
2b_n''\left(\beta_\lambda^{-1}(y)\right)\frac{(\lambda^2+1)^2}{y-\lambda}
\nonumber
\\
&+
b_n'\left(\beta_\lambda^{-1}(y)\right)\frac{\lambda^2+1}{y-\lambda}
\nonumber
\\
=&
-b_n'''(x)(x+\lambda)(\lambda^2+1)^2
+2b_n''(x)(x+\lambda)(\lambda^2+1)
\nonumber
\\
&-b_n'(x)(x+\lambda).
\label{eq:SO3-lambda-is-dense-in-SO-lambda-5}
\end{align}
%%%
Since
\[
\lim_{x\to\infty}(D_\infty^kb_n)(x)=0
\quad\mbox{for}\quad
k\in\{1,2,3\},
\]
we see that
%%%
\begin{equation}\label{eq:SO3-lambda-is-dense-in-SO-lambda-6}
\lim_{x\to\infty}x^kb_n^{(k)}(x)=0
\quad\mbox{for}\quad
k\in\{1,2,3\}.
\end{equation}
%%%
It follows from \eqref{eq:SO3-lambda-is-dense-in-SO-lambda-3}--%
\eqref{eq:SO3-lambda-is-dense-in-SO-lambda-6} that
\[
\lim_{y\to\lambda}(D_\lambda^kc_n)(y)=0
\quad\mbox{for}\quad
k\in\{1,2,3\}.
\]
Hence $c_n\in SO_\lambda^3$ for all $n\in\N$, which completes the proof.
\end{proof}
%%%----------------------------------------------------------------------------
\subsection{Slowly oscillating Fourier multipliers}
\label{sec:SO-multipliers}
The following result leads us to the definition of slowly oscillating
Fourier multipliers.
%%%----------------------------------------------------------------------------
\begin{theorem}[{\cite[Theorem~2.5]{K15c}}]
\label{th:boundedness-convolution-SO}
Let $X(\R)$ be a separable Banach function space such that the
Hardy-Littlewood maximal operator $\cM$ is bounded on $X(\R)$ and on its
associate space $X'(\R)$. If $\lambda\in\dR$ and $a\in SO_\lambda^3$, then
the convolution operator $W^0(a)$ is bounded on the space $X(\R)$ and
%%%
\begin{equation}\label{eq:boundedness-convolution-SO}
\|W^0(a)\|_{\cB(X(\R))}
\le
c_{X}\|a\|_{SO_\lambda^3},
\end{equation}
%%%
where $c_{X}$ is a positive constant depending only on $X(\R)$.
\end{theorem}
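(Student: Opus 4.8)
The plan is to build $W^0(a)$ from pieces of finite total variation, to which Theorem~\ref{th:Stechkin} applies, and then to reassemble these pieces by a Littlewood--Paley argument that is available on $X(\R)$ precisely because $\cM$ is bounded on $X(\R)$ and on $X'(\R)$. Throughout it suffices to define the operators on the dense subset $\cS_0(\R)$ (see Theorem~\ref{th:density-S0}) and extend by continuity.

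First I would fix a dyadic decomposition adapted to the oscillation point $\lambda$. For $\lambda=\infty$ take the annuli $I_n^\pm=\pm[2^n,2^{n+1}]$, and for $\lambda\in\R$ the translated annuli $I_n^\pm=\lambda\pm[2^n,2^{n+1}]$, $n\in\mathbb{Z}$, together with a smooth partition of unity $\{\varphi_n\}$ subordinate to them, obtained by dilating (and translating) a single bump, so that $a=\sum_n a\varphi_n$ away from $\lambda$. The first key step is the elementary per-block variation estimate: on $I_n^\pm$ one has $\int|a'|=\int|(D_\lambda a)(x)|/|x-\lambda|\,dx\le\|D_\lambda a\|_{L^\infty(\R)}\log 2$ (with $x$ in place of $x-\lambda$ when $\lambda=\infty$), while the variation of the dilated bump is scale invariant; hence $\|a\varphi_n\|_V\lesssim\|a\|_{SO_\lambda^3}$ uniformly in $n$. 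By Theorem~\ref{th:Stechkin} each block is therefore a Fourier multiplier with $\|W^0(a\varphi_n)\|_{\cB(X(\R))}\lesssim c_X\|a\|_{SO_\lambda^3}$, uniformly in $n$.

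The harder step is to sum the blocks on the general space $X(\R)$: the per-block bounds are uniform but not summable, so the triangle inequality fails. I would instead pass to square functions. Since $a\varphi_n$ is supported in $I_n^\pm$, the block is frequency-localized and equals $W^0(a\varphi_n)\widetilde\Delta_n$ for a Littlewood--Paley projection $\widetilde\Delta_n$ onto a fixed dilate of $I_n^\pm$, so by duality it suffices to estimate $\big\|\big(\sum_n|W^0(a\varphi_n)f|^2\big)^{1/2}\big\|_{X(\R)}$ against $\big\|\big(\sum_n|\widetilde\Delta_n f|^2\big)^{1/2}\big\|_{X(\R)}$. Two facts then close the argument. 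First, three integrations by parts, using that the $D_\lambda$-derivatives of $a$ up to order $3$ are bounded by $\|a\|_{SO_\lambda^3}$, show that the convolution kernel of $W^0(a\varphi_n)$ is dominated by a dilate of a fixed $L^1$ kernel, whence a pointwise bound $|W^0(a\varphi_n)f|\lesssim\|a\|_{SO_\lambda^3}\,\cM f$ in the form needed for a Fefferman--Stein vector-valued maximal inequality. Second, both this vector-valued inequality and the Littlewood--Paley equivalence $\|g\|_{X(\R)}\approx\big\|\big(\sum_n|\widetilde\Delta_n g|^2\big)^{1/2}\big\|_{X(\R)}$ hold on $X(\R)$ because $\cM$ is bounded on $X(\R)$ and on $X'(\R)$. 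Chaining these yields $\|W^0(a)f\|_{X(\R)}\lesssim c_X\|a\|_{SO_\lambda^3}\|f\|_{X(\R)}$ for $f\in\cS_0(\R)$, and density of $\cS_0(\R)$ provides the bounded extension together with \eqref{eq:boundedness-convolution-SO}.

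The main obstacle is exactly this reassembly on a general Banach function space: unlike the Hilbert-space case $L^2(\R)$, there is no orthogonality with which to sum the dyadic blocks directly, and the summability must be extracted from the smoothness of the symbol instead. This is where every hypothesis is genuinely used -- the boundedness of $\cM$ on \emph{both} $X(\R)$ and $X'(\R)$ to run the Littlewood--Paley and Fefferman--Stein machinery, and the three derivatives in $\|\cdot\|_{SO_\lambda^3}$ to obtain the kernel decay that dominates each smooth block by the Hardy--Littlewood maximal operator.
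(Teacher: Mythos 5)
The paper does not actually prove this theorem: it is imported verbatim from \cite[Theorem~2.5]{K15c}, so there is no internal proof to compare yours against, and your blind reconstruction must be judged on its own terms. The skeleton you chose is the classical Marcinkiewicz-multiplier route, and its concrete estimates are correct: the per-block variation bound $\int_{I_n^\pm}|a'|\,dx=\int_{I_n^\pm}|(D_\lambda a)(x)|\,|x-\lambda|^{-1}dx\le\|D_\lambda a\|_{L^\infty(\R)}\ln 2$ is uniform in $n$; three integrations by parts (using that $x^j a^{(j)}$ is a bounded-coefficient combination of $D_\lambda a,\dots,D_\lambda^j a$, $j\le 3$) do give a kernel majorant $2^n(1+2^n|x-\cdot|)^{-3}$ up to a constant times $\|a\|_{SO_\lambda^3}$, whence the pointwise domination by $\cM f$; and for finite $\lambda$ you can even reduce to $\lambda=0$ by conjugating with the modulation $e_\lambda I$, which is isometric on any Banach function space because the norm depends only on $|f|$. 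Note, by the way, that once you have the pointwise maximal domination, your per-block Stechkin step (Theorem~\ref{th:Stechkin}) is redundant: the square-function summation never uses the uniform per-block operator norms, only the kernel estimate.

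The genuine gap is exactly the two ``facts'' on which the whole reassembly rests: the Littlewood--Paley equivalence $\|g\|_{X(\R)}\approx\big\|\big(\sum_n|\widetilde\Delta_n g|^2\big)^{1/2}\big\|_{X(\R)}$ and the Fefferman--Stein vector-valued maximal inequality on $X(\R)$. You assert both ``because $\cM$ is bounded on $X(\R)$ and on $X'(\R)$,'' but neither follows in any elementary way from that hypothesis: a general Banach function space has no interpolation (the paper itself stresses this), no Hilbert-space orthogonality, and no a priori square-function theory. These two statements \emph{are} the hard content of the theorem, and stating them without mechanism leaves the proof circular in spirit. They are true, and the standard way to legitimize them is Rubio de Francia extrapolation into Banach function spaces (Cruz-Uribe--Martell--P\'erez), whose hypothesis is precisely that $\cM$ is bounded on $X(\R)$ and on its associate space: one extrapolates Kurtz's weighted Littlewood--Paley theorem and the Andersen--John weighted Fefferman--Stein inequality from $L^2(w)$, $w\in A_2$. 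With those citations your argument closes; you should also say explicitly that the duality step uses $X''(\R)=X(\R)$ (Lorentz--Luxemburg), so that the hypotheses are symmetric in $X(\R)$ and $X'(\R)$ and the dual square-function bound is available on $X'(\R)$ even though $X'(\R)$ need not be separable, and that your one-sided blocks $I_n^\pm$ additionally require boundedness of the Riesz projections, i.e.\ of the Cauchy singular integral operator $S$ on $X(\R)$, which again holds under the same hypothesis.
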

%%%----------------------------------------------------------------------------
Let $SO_{\lambda,X(\R)}$ denote the closure of $SO_\lambda^3$ in the norm of
$\cM_{X(\R)}$. Further, let $SO_{X(\R)}^\diamond$ be the smallest Banach
subalgebra of $\cM_{X(\R)}$ that contains all the Banach algebras
$SO_{\lambda,X(\R)}$ for $\lambda\in\dR$. The functions in
$SO_{X(\R)}^\diamond$ will be called slowly oscillating Fourier multipliers.
%%%----------------------------------------------------------------------------
\begin{lemma}\label{le:SO-continuous-embedding}
Let $X(\R)$ be a separable Banach function space such that the
Hardy-Littlewood maximal operator $\cM$ is bounded on $X(\R)$ and on its
associate space $X'(\R)$. Then
\[
SO_{X(\R)}^\diamond\subset SO_{L^2(\R)}^\diamond=SO^\diamond.
\]
\end{lemma}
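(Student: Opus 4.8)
The plan is to prove the two assertions $SO_{X(\R)}^\diamond\subset SO^\diamond$ and $SO_{L^2(\R)}^\diamond=SO^\diamond$ separately, reducing everything to the elementary building blocks $SO_\lambda^3$. First I would establish the inclusion $SO_{X(\R)}^\diamond\subset SO^\diamond$. By construction, $SO_{X(\R)}^\diamond$ is the smallest Banach subalgebra of $\cM_{X(\R)}$ containing all the $SO_{\lambda,X(\R)}$, and each $SO_{\lambda,X(\R)}$ is the closure of $SO_\lambda^3$ in the multiplier norm $\|\cdot\|_{\cM_{X(\R)}}$. The key tool here is Theorem~\ref{th:continuous-embedding}: the inequality $\|a\|_{L^\infty(\R)}\le\|a\|_{\cM_{X(\R)}}$ tells me that $\cM_{X(\R)}$-convergence implies $L^\infty$-convergence. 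Since $SO_\lambda^3\subset SO_\lambda\subset SO^\diamond$ and $SO^\diamond$ is closed in $L^\infty(\R)$ (being a $C^*$-algebra), any $\|\cdot\|_{\cM_{X(\R)}}$-limit of functions in $SO_\lambda^3$ is an $L^\infty$-limit of functions in $SO^\diamond$, hence lies in $SO^\diamond$; thus $SO_{\lambda,X(\R)}\subset SO^\diamond$ for every $\lambda\in\dR$.

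Next I would upgrade this to the algebra generated by all the $SO_{\lambda,X(\R)}$. The point is that $SO^\diamond$ is a $C^*$-subalgebra of $L^\infty(\R)$ that contains every $SO_\lambda$, so it contains the algebra generated by the $SO_{\lambda,X(\R)}$ over all $\lambda$. To pass to the Banach-algebra closure defining $SO_{X(\R)}^\diamond$, I again invoke Theorem~\ref{th:continuous-embedding}: the identity map is continuous from $(\cM_{X(\R)},\|\cdot\|_{\cM_{X(\R)}})$ into $(L^\infty(\R),\|\cdot\|_{L^\infty(\R)})$, so the $\|\cdot\|_{\cM_{X(\R)}}$-closure of a set is contained in the $L^\infty$-closure. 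Since $SO^\diamond$ is $L^\infty$-closed, the $\|\cdot\|_{\cM_{X(\R)}}$-closure of the algebra generated by the $SO_{\lambda,X(\R)}$ — which is exactly $SO_{X(\R)}^\diamond$ — sits inside $SO^\diamond$. This yields the desired inclusion.

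For the equality $SO_{L^2(\R)}^\diamond=SO^\diamond$, I would specialize to $X(\R)=L^2(\R)$, where the multiplier norm coincides with the operator norm on $L^2$ and, by Plancherel, $\|a\|_{\cM_{L^2(\R)}}=\|W^0(a)\|_{\cB(L^2(\R))}=\|a\|_{L^\infty(\R)}$. Consequently the $\cM_{L^2(\R)}$-norm is just the sup-norm, so $SO_{\lambda,L^2(\R)}$ is the $L^\infty$-closure of $SO_\lambda^3$. By Lemma~\ref{le:SO3-lambda-is-dense-in-SO-lambda}, $SO_\lambda^3$ is dense in $SO_\lambda$ in the $L^\infty$-norm, whence $SO_{\lambda,L^2(\R)}=SO_\lambda$ for each $\lambda\in\dR$. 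Taking the closed algebra generated by these over all $\lambda\in\dR$, and noting that on $L^2(\R)$ the Banach-algebra closure is the $C^*$-closure in $L^\infty$, gives precisely the smallest $C^*$-subalgebra of $L^\infty(\R)$ containing all the $SO_\lambda$, namely $SO^\diamond$ by its definition in Section~\ref{sec:SO-functions}.

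The main obstacle I anticipate is not analytical depth but bookkeeping: I must be careful that the two notions of "smallest algebra containing the $SO_\lambda$-pieces" — one formed as a Banach subalgebra of $\cM_{X(\R)}$, the other as a $C^*$-subalgebra of $L^\infty(\R)$ — genuinely coincide in the $L^2$ case, and that taking closures commutes appropriately with generating subalgebras. The crucial identity $\|a\|_{\cM_{L^2(\R)}}=\|a\|_{L^\infty(\R)}$ makes the multiplier topology on $L^2$ agree with the $L^\infty$ topology, which collapses the distinction; verifying this identification cleanly, together with the fact that a Banach $*$-subalgebra of the commutative $C^*$-algebra $L^\infty(\R)$ that is norm-closed in the sup-norm is automatically a $C^*$-subalgebra, is the step most prone to subtle error.
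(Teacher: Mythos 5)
Your proposal is correct and follows essentially the same route as the paper's proof: the inclusion rests on the norm inequality of Theorem~\ref{th:continuous-embedding} (so $\cM_{X(\R)}$-closures sit inside $L^\infty$-closures and hence inside the $L^\infty$-closed algebra $SO^\diamond$), and the equality $SO_{L^2(\R)}^\diamond=SO^\diamond$ rests on Plancherel's identity $\|a\|_{\cM_{L^2(\R)}}=\|a\|_{L^\infty(\R)}$ together with the density of $SO_\lambda^3$ in $SO_\lambda$ from Lemma~\ref{le:SO3-lambda-is-dense-in-SO-lambda}. The only cosmetic difference is that you prove $SO_{X(\R)}^\diamond\subset SO^\diamond$ directly rather than first embedding into $SO_{L^2(\R)}^\diamond$, and you spell out the harmless $*$-closure bookkeeping (each generator $SO_\lambda$ is self-adjoint, so the generated closed subalgebra of $L^\infty(\R)$ is automatically a $C^*$-subalgebra) that the paper leaves implicit.
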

%%%----------------------------------------------------------------------------
\begin{proof}
The continuous embedding
$SO_{X(\R)}^\diamond\subset SO_{L^2(\R)}^\diamond$
(with the embedding constant one) follows immediately from
Theorem~\ref{th:continuous-embedding} and the definitions
of the Banach algebras $SO_{X(\R)}^\diamond$ and $SO_{L^2(\R)}^\diamond$.
It is clear that $SO_{L^2(\R)}^\diamond \subset SO^\diamond$. The embedding
$SO^\diamond\subset SO_{L^2(\R)}^\diamond$ follows from
Lemma~\ref{le:SO3-lambda-is-dense-in-SO-lambda}.
\end{proof}
%%%----------------------------------------------------------------------------
\section{Maximal ideal spaces of the algebras \boldmath{$SO^\diamond$} and
\boldmath{$SO_{X(\R)}^\diamond$}}
\label{sec:maximal-ideal-space}
\subsection{Extensions of multiplicative linear functionals on
\boldmath{$C^*$}-algebras}
For a $C^*$-algebra (or, more generally, a Banach algebra) $\fA$
with unit $e$ and an element $a\in\fA$, let $\spe_\fA(a)$ denote the spectrum
of $a$ in $\fA$. Recall that an element $a$ of a $C^*$-algebra $\fA$ is said
to be positive if it is self-adjoint and $\spe_\fA(a)\subset[0,\infty)$. A
linear functional $\phi$ on $\fA$ is said to be a state if $\phi(a)\ge 0$
for all positive elements $a\in\fA$ and $\phi(e)=1$. The set of all states
of $\fA$ is denoted by $\fS(\fA)$. The extreme points of $\fS(\fA)$ are called
pure states of $\fA$ (see, e.g., \cite[Section~4.3]{KR97}).

Following \cite[p.~304]{A79}, for a state $\phi$, let
\[
\mathcal{G}_\phi(\fA):=\{a\in\fA\ :\ |\phi(a)|=\|a\|_\fA=1\}
\]
and let $\mathcal{G}_\phi^+(\fA)$ denote the set of all positive elements of
$\mathcal{G}_\phi(\fA)$. Let $\fA$ and $\fB$ be $C^*$-algebras such that
$e\in\fB\subset\fA$. Let $\phi$ be a state of $\fB$. Following
\cite[p.~310]{A79}, we say that $\fA$ is $\fB$-compressible modulo $\phi$
if for each $x\in\fA$ and each $\eps>0$ there is $b\in\mathcal{G}_\phi^+(\fB)$
and $y\in\fB$ such that $\|bxb-y\|_\fA<\eps$.

Since a nonzero linear functional on a commutative $C^*$-algebra is a pure
state if and only if it is multiplicative (see, e.g.,
\cite[Proposition~4.4.1]{KR97}), we immediately get the following
lemma from \cite[Theorem~3.2]{A79}.
%%%----------------------------------------------------------------------------
\begin{lemma}\label{le:extension-characters}
Let $\fB$ be a $C^*$-subalgebra of a commutative $C^*$-algebra $\fA$.
A nonzero multiplicative linear functional $\phi$ on $\fB$ admits a unique
extension to a multiplicative linear functional $\phi'$ on $\fA$ if and
only if $\fA$ is $\fB$-compressible modulo $\phi$.
\end{lemma}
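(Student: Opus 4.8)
The plan is to obtain the lemma directly from Arveson's extension theorem \cite[Theorem~3.2]{A79}, using only the standard dictionary between multiplicative functionals and pure states that is available because $\fA$ (and hence its $C^*$-subalgebra $\fB$) is commutative. First I would recall, as already noted in the text preceding the lemma, that on a commutative $C^*$-algebra the nonzero multiplicative linear functionals are exactly the pure states \cite[Proposition~4.4.1]{KR97}. Thus the hypothesis that $\phi$ is a nonzero multiplicative functional on $\fB$ is the very same as the hypothesis that $\phi$ is a pure state of $\fB$, which is precisely the setting in which \cite[Theorem~3.2]{A79} applies.

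Next I would invoke \cite[Theorem~3.2]{A79}, which characterizes the unique extension property of a pure state: the pure state $\phi$ of $\fB$ has a unique extension to a state of $\fA$ if and only if $\fA$ is $\fB$-compressible modulo $\phi$, in the sense defined above via the sets $\mathcal{G}_\phi^+(\fB)$. The only remaining task is then to reconcile the conclusion of that theorem, which speaks of \emph{state} extensions, with the conclusion of the lemma, which speaks of \emph{multiplicative} extensions. This is the step where commutativity of $\fA$ is essential.

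The key point is the equivalence, in the commutative case, of ``$\phi$ has a unique state extension to $\fA$'' with ``$\phi$ has a unique multiplicative extension to $\fA$.'' I would establish this through Gelfand theory. Writing $\fA\cong C(M_\fA)$ and $\fB\cong C(M_\fB)$, the inclusion $\fB\subset\fA$ is a unital injective $*$-homomorphism and therefore induces a continuous \emph{surjection} $\pi\colon M_\fA\to M_\fB$ of the maximal ideal spaces. Regarding $\phi$ as a point of $M_\fB$, the multiplicative extensions of $\phi$ to $\fA$ correspond bijectively to the points of the fiber $\pi^{-1}(\phi)$, whereas the state extensions of $\phi$ correspond to the regular Borel probability measures on $M_\fA$ whose push-forward under $\pi$ is the point mass at $\phi$, i.e.\ to probability measures supported in $\pi^{-1}(\phi)$. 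Surjectivity of $\pi$ guarantees that the fiber is nonempty, so at least one multiplicative extension exists. Moreover $\phi$ has a unique multiplicative extension exactly when $\pi^{-1}(\phi)$ is a single point, and this is precisely the condition under which that fiber carries a unique probability measure, i.e.\ under which $\phi$ has a unique state extension; in that case the unique state extension is the point mass at the fiber point and is automatically multiplicative. Chaining these equivalences with \cite[Theorem~3.2]{A79} yields the lemma.

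The main obstacle, and in fact the only nonroutine point, is this last equivalence between uniqueness among state extensions and uniqueness among multiplicative extensions: Arveson's theorem is phrased for states, while the lemma concerns multiplicative functionals, and the fiber description above is what bridges the gap. Everything else—existence of at least one extension, and the purity (hence multiplicativity) of the unique state extension—falls out of the surjectivity of $\pi$ together with the Gelfand correspondence, so that the deduction is indeed essentially immediate once the commutative dictionary is in place.
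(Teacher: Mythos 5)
Your proposal is correct and takes essentially the same route as the paper: the paper likewise deduces the lemma ``immediately'' from \cite[Theorem~3.2]{A79} together with the identification of nonzero multiplicative linear functionals on a commutative $C^*$-algebra with its pure states \cite[Proposition~4.4.1]{KR97}. Your Gelfand-duality argument (the surjection $\pi\colon M_\fA\to M_\fB$, with state extensions of $\phi$ realized as probability measures concentrated on the fiber $\pi^{-1}(\phi)$ and multiplicative extensions as its points) simply makes explicit the bridging step---that unique state extension and unique multiplicative extension are equivalent in the commutative case---which the paper treats as immediate.
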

%%%----------------------------------------------------------------------------
\subsection{Family of positive elements}
For $t\in\dR$ and $\omega>0$, let $\psi_{t,\omega}$ be a real-valued
function in $C(\dR)$ such that $0\le \psi_{t,\omega}(x)\le 1$
for all $x\in\R$. Assume that for $t\in\R$,
\[
\psi_{t,\omega}(s)=1
\quad\mbox{if}\quad
s\in(t-\omega,t+\omega),
\quad
\psi_{t,\omega}(s)=0
\quad\mbox{if}\quad
s\in\R\setminus(t-2\omega,t+2\omega),
\]
and for $t=\infty$,
\[
\psi_{\infty,\omega}(s)=1
\quad\mbox{if}\quad
s\in\R\setminus(-2\omega,2\omega),
\quad
\psi_{\infty,\omega}(s)=0
\quad\mbox{if}\quad
s\in(-\omega,\omega).
\]

Let $M(\fA)$ denote the maximal ideal space of a commutative
Banach algebra $\fA$.
%%%----------------------------------------------------------------------------
\begin{lemma}\label{le:positive-elements}
For $t\in\dR$ and $\omega>0$, the function $\psi_{t,\omega}$ is a positive
element of the $C^*$-algebras $C(\dR)$, $SO_t$, and $SO^\diamond$.
\end{lemma}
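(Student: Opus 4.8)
The plan is to prove the statement in two stages: first settle the membership $\psi_{t,\omega}\in SO_t\subset SO^\diamond$ (membership in $C(\dR)$ being part of the hypotheses), and then obtain positivity in all three algebras at once by exhibiting $\psi_{t,\omega}$ as $b^*b$ for a square root $b$ that already lives in the smallest algebra $C(\dR)$. The observation driving both stages is that every function continuous on the compactification $\dR$ slowly oscillates at each point of $\dR$, so that $C(\dR)$ is a unital $C^*$-subalgebra of $SO_\lambda$ for every $\lambda\in\dR$. Note that the detailed plateau/support profile of $\psi_{t,\omega}$ plays no role here; only continuity on $\dR$ and the bound $0\le\psi_{t,\omega}\le 1$ will be used.

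First I would verify the inclusion $C(\dR)\subset SO_t$. For $t\in\R$, continuity of a function $f\in C(\dR)$ at $t$ yields, for each $\eps>0$, a $\delta>0$ with $|f(s)-f(t)|<\eps/2$ whenever $|s-t|<\delta$; hence $\operatorname{osc}\big(f,t+([-x,-rx]\cup[rx,x])\big)\le\eps$ for $0<x<\delta$, so the defining limit of slow oscillation at $t$ vanishes. For $t=\infty$, a function $f\in C(\dR)$ has a finite limit at $\infty$, and the same $\eps$-argument confined to $\{|s|>R\}$ shows slow oscillation at $\infty$. Since trivially $C(\dR)\subset C_b(\dR\setminus\{t\})$, we get $C(\dR)\subset SO_t$, and therefore $\psi_{t,\omega}\in SO_t\subset SO^\diamond$ by the definition of $SO^\diamond$ as the smallest $C^*$-algebra containing all the $SO_\lambda$.

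Next I would address positivity. Being real-valued, $\psi_{t,\omega}$ is self-adjoint in each of the three algebras. Because $0\le\psi_{t,\omega}\le 1$, its pointwise nonnegative square root $b:=\psi_{t,\omega}^{1/2}$ is again a real-valued function in $C(\dR)$ with $0\le b\le 1$, hence $b\in SO_t\subset SO^\diamond$ by the inclusion just proved. Then $\psi_{t,\omega}=b^2=b^*b$ in each algebra, and the standard $C^*$-algebra fact that every element of the form $b^*b$ is positive yields positivity of $\psi_{t,\omega}$ in $C(\dR)$, in $SO_t$, and in $SO^\diamond$ simultaneously. The only point requiring care is conceptual rather than computational: positivity is defined through the spectrum, which a priori depends on the ambient algebra, so one may not simply quote positivity in $C(\dR)$ and declare it in $SO^\diamond$. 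The $b^*b$ presentation sidesteps this precisely because the witness $b$ manifestly lies in every algebra of the chain $C(\dR)\subset SO_t\subset SO^\diamond$; alternatively, one computes $\spe_{C(\dR)}(\psi_{t,\omega})=\psi_{t,\omega}(\dR)\subset[0,1]$ and invokes spectral permanence for unital $C^*$-subalgebras to transport this spectrum unchanged into the larger algebras. I expect no genuine obstacle beyond stating this transfer cleanly.
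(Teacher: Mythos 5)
Your proof is correct, but it takes a different route from the paper's. The paper argues entirely through spectra: since $M(C(\dR))=\dR$, the Gelfand theorem gives $\spe_{C(\dR)}(\psi_{t,\omega})=[0,1]$, and then spectral permanence for unital $C^*$-subalgebras (\cite[Proposition~4.1.5]{KR97}) transports this spectrum unchanged into $SO_t$ and $SO^\diamond$ --- this is precisely the ``alternative'' you mention in passing at the end. Your primary argument instead exhibits the square-root witness $b=\psi_{t,\omega}^{1/2}\in C(\dR)$ and writes $\psi_{t,\omega}=b^*b$ in each algebra of the chain $C(\dR)\subset SO_t\subset SO^\diamond$; since $b$ is self-adjoint, one can even avoid the general $b^*b\ge 0$ theorem and just use the spectral mapping theorem, $\spe(b^2)=\{\lambda^2:\lambda\in\spe(b)\}\subset[0,\infty)$. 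This buys you a more elementary and self-contained proof: you never need spectral permanence (inverse-closedness of $C^*$-subalgebras), and you correctly identify that positivity is a priori ambient-dependent, which is the one conceptual point the lemma turns on. You also prove the inclusion $C(\dR)\subset SO_t$ via the oscillation definition, a step the paper asserts without proof. What the paper's approach buys in exchange is the exact spectrum $[0,1]$ of $\psi_{t,\omega}$ in all three algebras, slightly more information than positivity alone, though not needed for the lemma or its later use.
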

%%%----------------------------------------------------------------------------
\begin{proof}
Since $M(C(\dR))=\dR$, it follows from the Gelfand theorem (see, e.g.,
\cite[Theorem~2.1.3]{RSS11}) that $\spe_{C(\dR)}(\psi_{t,\omega})=[0,1]$
for all $t\in\dR$ and all $\omega>0$. Since
$C(\dR)\subset SO_t\subset SO^\diamond$, we conclude that
the functions $\psi_{t,\omega}$ for $t\in\dR$ and $\omega>0$ are positive
elements of the $C^*$-algebras $C(\dR)$, $SO_t$, and $SO^\diamond$
because their spectra in each of these algebras coincide with
$[0,1]$ in view of \cite[Proposition~4.1.5]{KR97}.
\end{proof}
%%%----------------------------------------------------------------------------
\subsection{Maximal ideal space of the \boldmath{$C^*$}-algebra
\boldmath{$SO^\diamond$}}
If $\fB$ is a Banach subalgebra of $\fA$ and
$\lambda\in M(\fB)$, then the set
\[
M_\lambda(\fA):=\{\xi\in M(\fA):\xi|_{\fB}=\lambda\}
\]
is called the fiber of $M(\fA)$ over $\lambda\in M(\fB)$.
Hence for every Banach algebra $\Phi\subset L^\infty(\R)$ with
$M(C(\dR)\cap\Phi)=\dR$ and every $t\in\dR$, the fiber $M_t(\Phi)$ is
the set of all multiplicative linear functionals (characters) on $\Phi$ that
annihilate the set $\{f\in C(\dR)\cap\Phi:f(t)=0\}$. As usual, for all
$a\in\Phi$ and all $\xi\in M(\Phi)$, we put $a(\xi):=\xi(a)$. We will
frequently identify the points $t\in\dR$ with the evaluation functionals
$\delta_t$ defined by
\[
\delta_t(f)=f(t)
\quad\mbox{for}\quad f\in C(\dR),\quad t\in\dR.
\]
%%%----------------------------------------------------------------------------
\begin{lemma}\label{le:fibers-SOt-SO-diamond}
For every point $t\in\dR$, the fibers $M_t(SO_t)$ and $M_t(SO^\diamond)$ can
be identified as sets:
%%%
\begin{equation}\label{eq:fibers-SOt-SO-diamond-1}
M_t(SO_t)=M_t(SO^\diamond).
\end{equation}
\end{lemma}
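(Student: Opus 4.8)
The plan is to realize the asserted identification through the restriction map and to obtain its bijectivity from Lemma~\ref{le:extension-characters}. Since $SO_t$ is one of the generating $C^*$-algebras of $SO^\diamond$, we have $C(\dR)\subset SO_t\subset SO^\diamond$, so every character $\xi\in M_t(SO^\diamond)$ restricts to a character $\xi|_{SO_t}$ that still agrees with $\delta_t$ on $C(\dR)$ and hence lies in $M_t(SO_t)$. This gives a well-defined map $r:M_t(SO^\diamond)\to M_t(SO_t)$, $\xi\mapsto\xi|_{SO_t}$, and the content of the lemma is that $r$ is a bijection. By Lemma~\ref{le:extension-characters} applied with $\fB=SO_t$ and $\fA=SO^\diamond$, both surjectivity (existence of an extension of each $\phi\in M_t(SO_t)$) and injectivity (uniqueness of that extension) follow at once, provided one verifies that $SO^\diamond$ is $SO_t$-compressible modulo $\phi$ for every $\phi\in M_t(SO_t)$.

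The compressions are produced by the cut-off functions $\psi_{t,\omega}$. First I would record that each $\psi_{t,\omega}$ lies in $\mathcal{G}_\phi^+(SO_t)$: it is a positive element of $SO_t$ by Lemma~\ref{le:positive-elements}, its norm in the $C^*$-algebra $SO_t$ equals $\|\psi_{t,\omega}\|_{L^\infty(\R)}=1$, and since $\psi_{t,\omega}\in C(\dR)$ with $\psi_{t,\omega}(t)=1$ while $\phi|_{C(\dR)}=\delta_t$, we get $\phi(\psi_{t,\omega})=1$. Thus $b:=\psi_{t,\omega}$ is an admissible compressor for every $\omega>0$, and because all functions involved commute, $bxb=\psi_{t,\omega}^2\,x$.

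The heart of the argument is a localization fact: if $x=\prod_{i=1}^k a_i$ is a finite product with $a_i\in SO_{\lambda_i}$, then $\psi_{t,\omega}^2\,x\in SO_t$ once $\omega$ is small enough (for $t\in\R$) or large enough (for $t=\infty$). Indeed, split $x=g\,h$ with $g=\prod_{\lambda_i=t}a_i\in SO_t$ and $h=\prod_{\lambda_i\ne t}a_i$. Each factor of $h$ lies in $C_b(\dR\setminus\{\lambda_i\})$ with $\lambda_i\ne t$, hence is continuous at $t$, so $h$ is continuous at $t$. Choosing $\omega$ so that the support of $\psi_{t,\omega}$ avoids the finitely many points $\lambda_i\ne t$, the function $h$ is continuous on that support, so $\psi_{t,\omega}h$ is continuous there and vanishes off it; consequently $\psi_{t,\omega}h\in C(\dR)\subset SO_t$. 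Writing $\psi_{t,\omega}^2\,x=(\psi_{t,\omega}g)(\psi_{t,\omega}h)$ then exhibits it as a product of two elements of $SO_t$. By linearity the same holds for every $x$ in the dense subalgebra of $SO^\diamond$ consisting of finite sums of such products.

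Passing to a general $x\in SO^\diamond$, I would approximate it by an element $x_0$ of this dense subalgebra with $\|x-x_0\|_{SO^\diamond}<\eps$, then fix $\omega$ so that $y:=\psi_{t,\omega}^2\,x_0\in SO_t$ as above, and estimate $\|\psi_{t,\omega}^2 x-y\|_{SO^\diamond}\le\|\psi_{t,\omega}^2\|_{L^\infty(\R)}\,\|x-x_0\|_{SO^\diamond}<\eps$. This verifies that $SO^\diamond$ is $SO_t$-compressible modulo $\phi$, so Lemma~\ref{le:extension-characters} yields that $r$ is bijective and the fibers are identified, proving \eqref{eq:fibers-SOt-SO-diamond-1}. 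The main obstacle is precisely the localization step: one must check that multiplying by $\psi_{t,\omega}$ simultaneously annihilates the discontinuities of the generators coming from points $\lambda\ne t$ and preserves slow oscillation at $t$, so that the compressed products land exactly in $SO_t$; the choice of $\omega$ depending on the finitely many $\lambda_i$ present in $x_0$ is what makes this possible.
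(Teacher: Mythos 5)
Your proof is correct and takes essentially the same route as the paper: one inclusion by restriction of characters, the other via Anderson's criterion (Lemma~\ref{le:extension-characters}) with compressibility witnessed by the cut-off functions $\psi_{t,\omega}$ from Lemma~\ref{le:positive-elements}. The only deviation is minor: the paper approximates $x\in SO^\diamond$ by finite sums $\sum_{\lambda\in F}x_\lambda$ with $x_\lambda\in SO_\lambda$ and checks directly that $b\bigl(\sum_{\lambda\in F}x_\lambda\bigr)b\in SO_t$, whereas you approximate by finite sums of products of generators (whose density is immediate from the definition of $SO^\diamond$) and land in $SO_t$ through the factorization $(\psi_{t,\omega}g)(\psi_{t,\omega}h)$, which in effect makes explicit the reduction of products to single-point singularities that the paper's density claim leaves implicit.
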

%%%----------------------------------------------------------------------------
\begin{proof}
Since $C(\dR)\subset SO_t\subset SO^\diamond$, by the restriction of a
multiplicative linear functional defined on a bigger algebra to a smaller
algebra, we have
%%%
\begin{equation}\label{eq:fibers-SOt-SO-diamond-2}
M(SO^\diamond)\subset M(SO_t)\subset M(C(\dR)),
\quad t\in\dR.
\end{equation}
%%%
Since
\[
M(\Phi)=\bigcup_{t\in\dR}M_t(\Phi)
\quad\mbox{for}\quad
\Phi\in\{SO^\diamond,SO_\lambda:\lambda\in\dR\},
\]
where
%%%
\begin{equation}\label{eq:fibers-SOt-SO-diamond-3}
M_t(\Phi)=\{\zeta\in M(\Phi)\ : \ \zeta|_{C(\dR)}=\delta_t\},
\quad t\in\dR,
\end{equation}
%%%
it follows from \eqref{eq:fibers-SOt-SO-diamond-2}
and \eqref{eq:fibers-SOt-SO-diamond-3} that
%%%
\begin{equation}\label{eq:fibers-SOt-SO-diamond-4}
M_t(SO^\diamond)\subset M_t(SO_t),
\quad
t\in\R.
\end{equation}

Now fix $t\in\dR$ and a multiplicative linear functional $\eta\in M_t(SO_t)$.
Let us show that the $C^*$-algebra $SO^\diamond$ is $SO_t$-compressible
modulo $\eta$. Take $\eps>0$. By the definition of $SO^\diamond$, for a
function $x\in SO^\diamond$, there are a finite set $F\in\dR$ and
a finite set $\{x_\lambda\in SO_\lambda:\ \lambda\in F\}$ such that
\[
\left\|x-\sum_{\lambda\in F}x_\lambda\right\|_{L^\infty(\R)}<\eps.
\]

If $t\ne\infty$, take $\omega$ such that
\[
0<\omega<\frac{1}{2}\min_{\lambda\in F\setminus\{t\}}|\lambda-t|
\]
and $b:=\psi_{t,\omega}$. Then
%%%
\begin{equation}\label{eq:fibers-SOt-SO-diamond-5}
y:=b\left(\sum_{\lambda\in F}x_\lambda\right)b
\end{equation}
%%%
is equal to zero outside the interval $(t-2\omega,t+2\omega)$. Therefore,
$y\in SO_t$.

If $t=\infty$, take $\omega$ such that
\[
\omega>\max_{\lambda\in F\setminus\{\infty\}}|\lambda|
\]
and $b:=\psi_{\infty,\omega}$. Then the function $y$ defined by
\eqref{eq:fibers-SOt-SO-diamond-5} is equal to zero on $(-\omega,\omega)$
and $y\in SO_\infty$.

For $t\in\dR$, we have
%%%
\[
\|bxb-y\|_{L^\infty(\R)}
=
\left\|b\left(x-\sum_{\lambda\in F}x_\lambda\right)b\right\|_{L^\infty(\R)}
\le
\left\|x-\sum_{\lambda\in F}x_\lambda\right\|_{L^\infty(\R)}<\eps.
\]
Since $b$ is a positive element of $SO_t$ in view of
Lemma~\ref{le:positive-elements}, we have $b\in\mathcal{G}_\eta^+(SO_t)$,
which completes the proof of the fact that $SO^\diamond$ is
$SO_t$-compressible modulo the multiplicative linear functional
$\eta\in M_t(SO_t)$.

In view of Lemma~\ref{le:extension-characters}, there exists a unique
extension $\eta'$ of the multiplicative linear functional $\eta$ to the whole
algebra $SO^\diamond$. By the definition of the fiber $M_t(SO^\diamond)$, we
have $\eta'\in M_t(SO^\diamond)$. Thus, we can identify $M_t(SO_t)$ with a
subset of $M_t(SO^\diamond)$:
%%%
\begin{equation}\label{eq:fibers-SOt-SO-diamond-6}
M_t(SO_t)\subset M_t(SO^\diamond).
\end{equation}
%%%
Combining \eqref{eq:fibers-SOt-SO-diamond-4} and
\eqref{eq:fibers-SOt-SO-diamond-6}, we arrive at
\eqref{eq:fibers-SOt-SO-diamond-1}.
\end{proof}
%%%----------------------------------------------------------------------------
\begin{corollary}
The maximal ideal space of the commutative $C^*$-algebra $SO^\diamond$
can be identified with the set
\[
\bigcup_{t\in\dR}M_t(SO_t).
\]
\end{corollary}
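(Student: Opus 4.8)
The plan is to derive the identification directly from the fiberwise equality established in Lemma~\ref{le:fibers-SOt-SO-diamond}, combined with the decomposition of $M(SO^\diamond)$ into fibers over the points of $\dR$.

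First I would recall that $C(\dR)$ is a unital $C^*$-subalgebra of $SO^\diamond$ with $M(C(\dR))=\dR$. Consequently, restricting each character of $SO^\diamond$ to $C(\dR)$ produces an evaluation functional $\delta_t$ for a uniquely determined point $t\in\dR$. This shows that every $\zeta\in M(SO^\diamond)$ lies in exactly one fiber $M_t(SO^\diamond)=\{\zeta\in M(SO^\diamond):\zeta|_{C(\dR)}=\delta_t\}$, which is precisely the decomposition already recorded in the proof of Lemma~\ref{le:fibers-SOt-SO-diamond}, namely
\[
M(SO^\diamond)=\bigcup_{t\in\dR}M_t(SO^\diamond),
\]
the union being disjoint.

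The second and final step is to invoke Lemma~\ref{le:fibers-SOt-SO-diamond}, which asserts that $M_t(SO^\diamond)=M_t(SO_t)$ for every $t\in\dR$. Substituting this fiberwise identification into the displayed decomposition yields
\[
M(SO^\diamond)=\bigcup_{t\in\dR}M_t(SO_t),
\]
as claimed.

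There is no genuinely difficult step here: all the substantive work, namely the $SO_t$-compressibility argument that extends each character in $M_t(SO_t)$ uniquely to $SO^\diamond$, has already been carried out in the proof of Lemma~\ref{le:fibers-SOt-SO-diamond}. The only point that requires care is the justification of the fiber decomposition, i.e. that the restriction map $M(SO^\diamond)\to M(C(\dR))=\dR$ is everywhere defined and that its fibers exhaust $M(SO^\diamond)$; this is immediate from the inclusion $C(\dR)\subset SO^\diamond$ of unital $C^*$-algebras together with the Gelfand-theoretic identification $M(C(\dR))=\dR$.
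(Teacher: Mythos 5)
Your proof is correct and matches the paper's (implicit) argument exactly: the paper states this corollary without proof precisely because it follows from the fiber decomposition $M(SO^\diamond)=\bigcup_{t\in\dR}M_t(SO^\diamond)$ already displayed in the proof of Lemma~\ref{le:fibers-SOt-SO-diamond}, combined with that lemma's identification $M_t(SO^\diamond)=M_t(SO_t)$. Your added justification that each character of $SO^\diamond$ restricts to a unique evaluation functional $\delta_t$ on $C(\dR)$, so the fibers are disjoint and exhaust $M(SO^\diamond)$, is a correct and welcome elaboration of the same route.
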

%%%----------------------------------------------------------------------------
\subsection{Extensions of multiplicative linear functionals on Banach algebras}
The following theorem in a slightly different form is contained in
\cite[Theorem~2.1.1]{S07} and \cite[Theorem~3.10]{SM86}. For the convenience
of readers, we give its proof here.
%%%----------------------------------------------------------------------------
\begin{theorem}\label{th:Simonenko}
Let $\fA,\fB,\fC$ be commutative unital Banach algebras with common unit
and homomorphic imbeddings $\fA\subset\fB\subset\fC$, where $\fA$ is dense
in $\fB$. If for each functional $\varphi\in M(\fA)$ there exists a unique
extension $\varphi'\in M(\fC)$, then for every functional $\psi\in M(\fB)$
there exists a unique extension $\psi'\in M(\fC)$.
\end{theorem}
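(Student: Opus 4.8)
The plan is to transport the extension property from $\fA$ up to $\fB$ by combining the restriction maps on maximal ideal spaces with the density of $\fA$ in $\fB$. Recall that every nonzero multiplicative linear functional on a commutative unital Banach algebra is automatically continuous (of norm one), so that restricting a character on a larger algebra to a smaller one always yields a character on the smaller one; indeed, if $\xi\in M(\fC)$ then $\xi|_\fB$ is multiplicative and sends the common unit to $1$, hence $\xi|_\fB\in M(\fB)$. Thus restriction induces maps $M(\fC)\to M(\fB)\to M(\fA)$, and the hypothesis asserts exactly that the composite $M(\fC)\to M(\fA)$ is a bijection. The goal is to prove that $M(\fC)\to M(\fB)$ is a bijection as well, which is precisely the existence and uniqueness of an extension $\psi'\in M(\fC)$ for every $\psi\in M(\fB)$.

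For the existence part, I would fix $\psi\in M(\fB)$ and set $\varphi:=\psi|_\fA$. Since $\psi$ is a character and $\fA$ contains the common unit, $\varphi$ is a nonzero multiplicative linear functional on $\fA$, hence $\varphi\in M(\fA)$. By hypothesis there is a unique $\varphi'\in M(\fC)$ with $\varphi'|_\fA=\varphi$. It remains to check that $\varphi'$ extends $\psi$, that is, that $\varphi'|_\fB=\psi$. Both $\varphi'|_\fB$ and $\psi$ are characters of $\fB$ and therefore continuous in the $\fB$-norm, and they coincide on $\fA$ because $\varphi'|_\fA=\varphi=\psi|_\fA$. As $\fA$ is dense in $\fB$, two continuous functionals agreeing on $\fA$ must agree on all of $\fB$, so $\varphi'|_\fB=\psi$ and $\psi':=\varphi'$ is the required extension.

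For the uniqueness part, suppose $\psi_1',\psi_2'\in M(\fC)$ both restrict to $\psi$ on $\fB$. Restricting once more to $\fA$ gives $\psi_1'|_\fA=\psi|_\fA=\psi_2'|_\fA=\varphi$, so $\psi_1'$ and $\psi_2'$ are two extensions of the same functional $\varphi\in M(\fA)$ to $\fC$. The uniqueness clause of the hypothesis then forces $\psi_1'=\psi_2'$, completing the argument.

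The only delicate point is the density step in the existence part: one needs both $\varphi'|_\fB$ and $\psi$ to be continuous on $\fB$ before invoking density. This is where the automatic continuity of characters on Banach algebras is essential, since the restriction $\varphi'|_\fB$ is manufactured from a character on $\fC$ and is not handed to us in advance as a bounded functional on $\fB$. Everything else is routine bookkeeping with the restriction maps, so I expect no further obstacle.
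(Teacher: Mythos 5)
Your proposal is correct and follows essentially the same route as the paper's proof: restrict $\psi$ to $\fA$, extend the restriction uniquely to $\fC$ by hypothesis, and use the automatic continuity of characters together with the density of $\fA$ in $\fB$ to verify that the extension agrees with $\psi$ on all of $\fB$. Your uniqueness argument (two extensions of $\psi$ restrict to the same character of $\fA$, so the uniqueness clause of the hypothesis forces them to coincide) is in fact spelled out more explicitly than the paper's brief ``unique by construction,'' but it is the same underlying reasoning.
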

%%%----------------------------------------------------------------------------
\begin{proof}
Let $\psi\in M(\fB)$. Then $\psi_1:=\psi|_{\fA}\in M(\fA)$. By the
hypotheses, there exists a unique extension $\psi_3:=(\psi_1)'\in M(\fC)$.
Then $\psi_1(a)=\psi(a)=\psi_3(a)$ for all $a\in\fA$. Let $\psi_2:=
\psi_3|_\fB\in M(\fB)$. Since $\fA\subset\fB$, it follows that
%%%
\begin{equation}\label{eq:Simonenko-1}
\psi(a)=\psi_2(a)\quad\mbox{for all}\quad a\in\fA.
\end{equation}
%%%
On the other hand, functionals $\psi,\psi_2\in M(\fB)$ are continuous on
$\fB$ (see, e.g., \cite[Lemma~2.1.5]{K09}). Since $\fA$ is dense in $\fB$,
for every $b\in\fB$ there exists a sequence $\{a_n\}_{n\in\N}\subset\fA$
such that $\|a_n-b\|_\fB\to 0$ as $n\to\infty$.
It follows from this observation and \eqref{eq:Simonenko-1} that for every
$b\in\fB$,
\[
\psi(b)=\lim_{n\to\infty}\psi(a_n)=\lim_{n\to\infty}\psi_2(a_n)=\psi_2(b)
=\psi_3(b).
\]
Thus $\psi_3\in M(\fC)$ is an extension of $\psi$. This extension is unique
by construction.
\end{proof}
%%%----------------------------------------------------------------------------
\subsection{Maximal ideal space of the Banach algebras
\boldmath{$SO_{t,X(\R)}$}}
We start with the following refinement of \cite[Lemma~3.4]{KILH13a}.
%%%----------------------------------------------------------------------------
\begin{lemma}\label{le:extension-from-SOt3-to-SOt}
Let $t\in\dR$. Then for each functional $\varphi\in M(SO_t^3)$ there exists
a unique extension $\varphi'\in M(SO_t)$.
\end{lemma}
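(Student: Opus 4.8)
The plan is to reduce everything to two ingredients: the density of $SO_t^3$ in the $C^*$-algebra $SO_t$ furnished by Lemma~\ref{le:SO3-lambda-is-dense-in-SO-lambda}, and the spectral invariance (inverse closedness) of $SO_t^3$ in $SO_t$. Note that the $C^*$-compressibility machinery of Lemma~\ref{le:extension-characters} is not directly available here, since $SO_t^3$ is only a Banach algebra; this is why I would argue through spectral invariance instead. Uniqueness is the easy half. Every nonzero multiplicative linear functional on a unital commutative Banach algebra is automatically continuous, so any two characters $\varphi_1',\varphi_2'\in M(SO_t)$ extending $\varphi$ are continuous with respect to the norm $\|\cdot\|_{L^\infty(\R)}$ of $SO_t$ and agree on $SO_t^3$. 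As $SO_t^3$ is dense in $SO_t$ by Lemma~\ref{le:SO3-lambda-is-dense-in-SO-lambda}, they agree on all of $SO_t$.

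For existence, the goal is to show that $\varphi$ is bounded in the weaker norm, that is, $|\varphi(a)|\le\|a\|_{L^\infty(\R)}$ for every $a\in SO_t^3$. Since $\varphi$ is a character, $a-\varphi(a)e$ lies in its kernel and cannot be invertible in $SO_t^3$, whence $\varphi(a)\in\spe_{SO_t^3}(a)$. Because $SO_t^3$ and $SO_t$ share the same unit, one always has $\spe_{SO_t}(a)\subset\spe_{SO_t^3}(a)$; if, in addition, $SO_t^3$ is inverse closed in $SO_t$, then the two spectra coincide, so $\varphi(a)\in\spe_{SO_t}(a)$. As $SO_t$ is a commutative $C^*$-algebra with norm $\|\cdot\|_{L^\infty(\R)}$, its Gelfand transform is isometric and $\max_{\xi\in M(SO_t)}|a(\xi)|=\|a\|_{L^\infty(\R)}$, giving the desired bound $|\varphi(a)|\le\|a\|_{L^\infty(\R)}$. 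With this estimate in hand, $\varphi$ extends by continuity and density to a bounded functional $\varphi'$ on $SO_t$; approximating $f,g\in SO_t$ by sequences in $SO_t^3$ in the norm $\|\cdot\|_{L^\infty(\R)}$ and passing to the limit in $\varphi'(f_ng_n)=\varphi(f_n)\varphi(g_n)$ shows that $\varphi'$ is multiplicative, and $\varphi'(e)=\varphi(e)=1$ shows it is nonzero, so $\varphi'\in M(SO_t)$.

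The main obstacle is therefore the inverse closedness of $SO_t^3$ in $SO_t$: if $a\in SO_t^3$ is invertible in $SO_t$, equivalently $a$ is bounded away from zero, I must verify that $1/a$ again belongs to $SO_t^3$. Membership $1/a\in SO_t$ and the $C^3$-smoothness of $1/a$ on $\R\setminus\{t\}$ are immediate, since $a$ is continuous, bounded away from zero, and three times continuously differentiable there. What remains is the decay $\lim_{x\to t}(D_t^k(1/a))(x)=0$ for $k\in\{1,2,3\}$ (with $x\to\infty$ in the case $t=\infty$). Because $D_t$ acts as a derivation, each $D_t^k(1/a)$ is a finite sum of products of the functions $D_t^1 a,\dots,D_t^k a$ divided by a positive power of $a$; invoking $\lim D_t^j a=0$ for $j=1,2,3$ together with the lower bound on $|a|$, every such term tends to $0$. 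This yields $1/a\in SO_t^3$ and closes the argument. I expect this derivative bookkeeping to be the only genuinely technical point, the remaining steps being formal consequences of density and of the $C^*$-structure of $SO_t$.
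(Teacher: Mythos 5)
Your proposal is correct, and it takes a genuinely more self-contained route than the paper. The paper does not prove existence at all: it appeals to the proof of \cite[Lemma~3.4]{KILH13a}, supplying only the density of $SO_t^3$ in $SO_t$ (Lemma~\ref{le:SO3-lambda-is-dense-in-SO-lambda}) which that cited argument needs, and it handles uniqueness by remarking that the maximal ideal spaces are Hausdorff and invoking the standard unique-continuous-extension fact \cite[Theorem~IV.2(b)]{RS80} --- in substance the same density-plus-continuity argument you give. Your existence argument is different and complete: you show $SO_t^3$ is inverse closed in $SO_t$, deduce $\spe_{SO_t^3}(a)=\spe_{SO_t}(a)$, and since the Gelfand transform on the commutative $C^*$-algebra $SO_t$ is isometric you get the contractivity bound $|\varphi(a)|\le\|a\|_{L^\infty(\R)}$, after which extension by density and passage to the limit in $\varphi(f_ng_n)=\varphi(f_n)\varphi(g_n)$ are routine. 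The technical core checks out: $D_t$ is a derivation, so by induction each $D_t^k(1/a)$, $k\in\{1,2,3\}$, is a finite sum of monomials in $D_ta,\dots,D_t^ka$ divided by powers of $a$, each monomial containing at least one factor $D_t^ja$ with $j\ge 1$; since $|a|$ is bounded away from zero and $D_t^ja\to 0$ as $x\to t$ while the $D_t^ja$ are globally bounded (finiteness of the $SO_t^3$-norm of $a$), each term vanishes at $t$ and is bounded, so $1/a$ genuinely has finite $SO_t^3$-norm --- a point you leave implicit but which follows from the same formulas, and which matters because membership in $SO_t^3$ tacitly requires it. Two micro-remarks, both in your favor: invertibility of $a$ in $SO_t$ is indeed equivalent to $\inf|a|>0$, since the $C^*$-algebra $SO_t$ is inverse closed in $L^\infty(\R)$ (this also gives $1/a\in SO_t$ for free, making your ``immediate'' claim genuinely immediate), and your containment $\varphi(a)\in\spe_{SO_t^3}(a)$ is purely algebraic, so no completeness of $SO_t^3$ is needed. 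What each approach buys: the paper's proof is two lines but outsourced to \cite{KILH13a}; yours is longer but fully self-contained and establishes, as a by-product, the spectral invariance of $SO_t^3$ in $SO_t$ --- equivalently the coincidence of the two spectra --- which is of independent use elsewhere in this circle of ideas.
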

%%%----------------------------------------------------------------------------
The density of $SO_t^3$ in the Banach algebra $SO_t$ essentially used in the
proof of \cite[Lemma~3.4]{KILH13a} is justified in
Lemma~\ref{le:SO3-lambda-is-dense-in-SO-lambda}. Note that the uniqueness of
an extension was not explicitly mentioned in \cite[Lemma~3.4]{KILH13a}.
However, since $M(SO_t^3)$ and $M(SO_t)$ are Hausdorff spaces (see, e.g.,
\cite[Theorem~2.2.3]{K09}), the uniqueness of an extension constructed in
the proof of \cite[Lemma~3.4]{KILH13a} is a consequence of a standard fact
from general topology (see, e.g,, \cite[Theorem~IV.2(b)]{RS80}).

The following lemma is analogous to \cite[Lemma~3.5]{KILH13a}.
%%%----------------------------------------------------------------------------
\begin{lemma}\label{le:equality-of-maximal-ideal-spaces-local}
Let $X(\R)$ be a separable Banach function space such that the Hardy-Littlewood
maximal operator $\cM$ is bounded on the space $X(\R)$ and on its associate
space $X'(\R)$. If $t\in\dR$, then the maximal ideal spaces of the
$C^*$-algebra $SO_t$ and the Banach algebra $SO_{t,X(\R)}$ can be identified
as sets:
%%%
\begin{equation}\label{eq:equality-of-maximal-ideal-spaces-local-1}
M(SO_t)=M(SO_{t,X(\R)}).
\end{equation}
\end{lemma}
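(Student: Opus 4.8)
The plan is to deduce the set identification \eqref{eq:equality-of-maximal-ideal-spaces-local-1} from the chain of commutative unital Banach algebras
$$
SO_t^3\subset SO_{t,X(\R)}\subset SO_t
$$
by combining Theorem~\ref{th:Simonenko} with Lemma~\ref{le:extension-from-SOt3-to-SOt}, and then checking that the resulting extension of characters is inverse to restriction.

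First I would justify that this chain satisfies the hypotheses of Theorem~\ref{th:Simonenko}. The constant function $1$ belongs to $SO_t^3$ and is the common unit of all three algebras. By Theorem~\ref{th:boundedness-convolution-SO}, the elements of $SO_t^3$ are Fourier multipliers on $X(\R)$, and $SO_{t,X(\R)}$ is by definition the closure of $SO_t^3$ in the norm of $\cM_{X(\R)}$; hence $SO_t^3$ is dense in $SO_{t,X(\R)}$ and the imbedding $SO_t^3\subset SO_{t,X(\R)}$ is a continuous homomorphism. For the inclusion $SO_{t,X(\R)}\subset SO_t$, I would note that if functions $a_n\in SO_t^3$ converge to $a$ in $\cM_{X(\R)}$, then by Theorem~\ref{th:continuous-embedding} they also converge to $a$ in $L^\infty(\R)$; since $SO_t$ is closed in $L^\infty(\R)$ and contains $SO_t^3$, we get $a\in SO_t$, so the imbedding $SO_{t,X(\R)}\subset SO_t$ is again a continuous homomorphism.

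Next I would apply Theorem~\ref{th:Simonenko} with $\fA=SO_t^3$, $\fB=SO_{t,X(\R)}$, and $\fC=SO_t$. Its only nontrivial hypothesis, that every $\varphi\in M(SO_t^3)$ admits a unique extension in $M(SO_t)$, is exactly Lemma~\ref{le:extension-from-SOt3-to-SOt}. The theorem therefore guarantees that every $\psi\in M(SO_{t,X(\R)})$ admits a unique extension $\psi'\in M(SO_t)$.

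Finally I would verify that the restriction map $R\colon M(SO_t)\to M(SO_{t,X(\R)})$, $\chi\mapsto\chi|_{SO_{t,X(\R)}}$, and the extension map $E\colon\psi\mapsto\psi'$ are mutually inverse. Indeed, $R(E(\psi))=\psi$ because $\psi'$ restricts to $\psi$, while for $\chi\in M(SO_t)$ the functional $\chi$ is itself an extension of $\chi|_{SO_{t,X(\R)}}$, so the uniqueness in Theorem~\ref{th:Simonenko} forces $E(R(\chi))=\chi$. This mutually inverse pair yields the bijection in \eqref{eq:equality-of-maximal-ideal-spaces-local-1}. I expect the argument to be essentially formal once the chain is set up: the genuine analytic input---the density of $SO_t^3$ in $SO_{t,X(\R)}$ and, above all, the unique extendability of characters from $SO_t^3$ to $SO_t$---has already been isolated in the cited results, so the main point to watch is that the \emph{uniqueness} assertion of Lemma~\ref{le:extension-from-SOt3-to-SOt} is genuinely available, since it is precisely what makes restriction injective and hence upgrades the extension into a bijection.
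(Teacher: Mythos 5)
Your proposal is correct and follows essentially the same route as the paper: the chain $SO_t^3\subset SO_{t,X(\R)}\subset SO_t$, density of $SO_t^3$ in $SO_{t,X(\R)}$ by definition, Theorem~\ref{th:Simonenko} combined with Lemma~\ref{le:extension-from-SOt3-to-SOt} for unique extension of characters, and restriction in the opposite direction. Your explicit verification that extension and restriction are mutually inverse (and your justification via Theorem~\ref{th:continuous-embedding} that $SO_{t,X(\R)}\subset SO_t$) merely spells out steps the paper leaves implicit.
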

%%%----------------------------------------------------------------------------
\begin{proof}
It follows from Theorem~\ref{th:continuous-embedding} that
$SO_t^3\subset SO_{t,X(\R)}\subset SO_t$, where the
imbeddings are homomorphic. By the definition of the algebra
$SO_{t,X(\R)}$, the algebra $SO_t^3$ is dense in $SO_{t,X(\R)}$ with
respect to the norm of $\cM_{X(\R)}$. Taking into account these observations
and Lemma~\ref{le:extension-from-SOt3-to-SOt}, we see that the commutative
Banach algebras
\[
\fA=SO_t^3,\quad \fB=SO_{t,X(\R)},\quad \fC=SO_t
\]
satisfy all the conditions of Theorem~\ref{th:Simonenko}. By this theorem,
every multiplicative linear functional on $SO_{t,X(\R)}$ admits a unique
extension to a multiplicative linear functional on $SO_t$. Hence
we can identify $M(SO_{t,X(\R)})$ with a subset of $M(SO_t)$:
%%%
\begin{equation}\label{eq:equality-of-maximal-ideal-spaces-local-2}
M(SO_{t,X(\R)})\subset M(SO_t).
\end{equation}
%%%
On the other hand, since $SO_{t,X(\R)}\subset SO_t$, by the restriction of a
multiplicative linear functional defined on a bigger algebra to a smaller
algebra, we have
%%%
\begin{equation}\label{eq:equality-of-maximal-ideal-spaces-local-3}
M(SO_t)\subset M(SO_{t,X(\R)}).
\end{equation}
%%%
Combining inclusions \eqref{eq:equality-of-maximal-ideal-spaces-local-2}
and \eqref{eq:equality-of-maximal-ideal-spaces-local-3},
we immediately arrive at
\eqref{eq:equality-of-maximal-ideal-spaces-local-1}.
\end{proof}
%%%----------------------------------------------------------------------------
The next lemma is analogous to Lemma~\ref{le:fibers-SOt-SO-diamond}.
%%%----------------------------------------------------------------------------
\begin{lemma}\label{le:fibers-multipliers-SOt-SO-diamond}
Let $X(\R)$ be a separable Banach function space such that the Hardy-Littlewood
maximal operator $\cM$ is bounded on the space $X(\R)$ and on its associate
space $X'(\R)$. Then, for every point $t\in\dR$, the fibers
$M_t(SO_{t,X(\R)})$ and $M_t(SO_{X(\R)}^\diamond)$ can be identified as sets:
%%%
\begin{equation}\label{eq:fibers-multipliers-SOt-SO-diamond-1}
M_t(SO_{t,X(\R)})=M_t(SO_{X(\R)}^\diamond).
\end{equation}
\end{lemma}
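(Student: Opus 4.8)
The plan is to show that the restriction map $r\colon M_t(SO_{X(\R)}^\diamond)\to M_t(SO_{t,X(\R)})$ induced by the inclusion $SO_{t,X(\R)}\subset SO_{X(\R)}^\diamond$ is a bijection; this is exactly the identification \eqref{eq:fibers-multipliers-SOt-SO-diamond-1}. The argument mirrors the proof of Lemma~\ref{le:fibers-SOt-SO-diamond}, but since $SO_{t,X(\R)}$ and $SO_{X(\R)}^\diamond$ are merely Banach algebras, the $C^*$-compression underlying Lemma~\ref{le:extension-characters} is unavailable. Instead I would route the surjectivity of $r$ through the $C^*$-level results already established, and prove injectivity by a direct localization. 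Throughout I use the homomorphic inclusions $SO_{t,X(\R)}\subset SO_t\subset SO^\diamond$ and $SO_{t,X(\R)}\subset SO_{X(\R)}^\diamond\subset SO^\diamond$ (the latter via Lemma~\ref{le:SO-continuous-embedding}), together with the observation that the cut-off functions $\psi_{\lambda,\omega}$ may be taken smooth, so that $D_\mu^k\psi_{\lambda,\omega}(x)\to0$ as $x\to\mu$ for $k=1,2,3$ and every $\mu\in\dR$; hence $\psi_{\lambda,\omega}\in SO_\mu^3\subset SO_{\mu,X(\R)}\subset SO_{X(\R)}^\diamond$.

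For surjectivity of $r$, take $\varphi\in M_t(SO_{t,X(\R)})$. By Lemma~\ref{le:equality-of-maximal-ideal-spaces-local} it extends uniquely to $\widetilde\varphi\in M(SO_t)$, and a short check using that the bump functions lie in $C(\dR)\cap SO_{t,X(\R)}$ and separate the points of $\dR$ shows $\widetilde\varphi\in M_t(SO_t)$. By Lemma~\ref{le:fibers-SOt-SO-diamond}, $\widetilde\varphi$ extends to $\widehat\varphi\in M_t(SO^\diamond)$. Restricting $\widehat\varphi$ to $SO_{X(\R)}^\diamond\subset SO^\diamond$ yields $\psi:=\widehat\varphi|_{SO_{X(\R)}^\diamond}$, which lies in $M_t(SO_{X(\R)}^\diamond)$ since $\widehat\varphi|_{C(\dR)}=\delta_t$, and which restricts to $\varphi$ on $SO_{t,X(\R)}$ by construction; thus $r(\psi)=\varphi$.

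The main obstacle is injectivity of $r$, that is, showing that a character $\psi\in M_t(SO_{X(\R)}^\diamond)$ is determined by $\psi|_{SO_{t,X(\R)}}$. The key claim is that $\psi(a)=a(t)$ for every $\lambda\in\dR\setminus\{t\}$ and every $a\in SO_{\lambda,X(\R)}$, the value $a(t)$ being well defined because $a$ is continuous at $t\ne\lambda$. To prove it, fix a bump $\psi_{\lambda,\omega}$ whose support avoids $t$ and split $a=\psi_{\lambda,\omega}a+(1-\psi_{\lambda,\omega})a$. The second summand lies in $C(\dR)\cap SO_{X(\R)}^\diamond$ and equals $a(t)$ at $t$, so $\psi\big((1-\psi_{\lambda,\omega})a\big)=a(t)$ by the fiber condition. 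For the first summand, choose a bump $b$ equal to $1$ near $t$ (namely $\psi_{t,\omega'}$ if $t\in\R$, or $\psi_{\infty,\omega'}$ if $t=\infty$) with support disjoint from that of $\psi_{\lambda,\omega}$; then $b\cdot(\psi_{\lambda,\omega}a)=0$ while $\psi(b)=b(t)=1$, so multiplicativity forces $\psi(\psi_{\lambda,\omega}a)=0$. Adding the two contributions gives $\psi(a)=a(t)$.

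Granting the claim, $\psi$ agrees with $\varphi$ on $SO_{t,X(\R)}$ and with $\delta_t$ on each $SO_{\lambda,X(\R)}$ with $\lambda\ne t$; since these subalgebras generate a dense subalgebra of $SO_{X(\R)}^\diamond$ and $\psi$ is continuous and multiplicative, $\psi$ is uniquely determined by $\varphi$. Hence $r$ is injective, and together with the surjectivity above it is a bijection, yielding \eqref{eq:fibers-multipliers-SOt-SO-diamond-1}. The delicate point throughout is that every auxiliary cut-off function must be verified to be a genuine Fourier multiplier lying in $SO_{X(\R)}^\diamond$, which is precisely what makes the localization legitimate in the Banach-algebra setting and replaces the $C^*$-compression used in Lemma~\ref{le:fibers-SOt-SO-diamond}.
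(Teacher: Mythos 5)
Your proof is correct, and it is worth comparing with the paper's, because the two halves play out differently. For the direction $M_t(SO_{t,X(\R)})\subset M_t(SO_{X(\R)}^\diamond)$ you follow the paper verbatim: extend $\varphi$ to $M(SO_t)$ via Lemma~\ref{le:equality-of-maximal-ideal-spaces-local}, check it lands in the fiber $M_t(SO_t)$, extend to $M_t(SO^\diamond)$ via Lemma~\ref{le:fibers-SOt-SO-diamond}, and restrict to $SO_{X(\R)}^\diamond$; this is exactly the paper's chain $\xi\mapsto\xi'\mapsto\xi''\mapsto\xi''|_{SO_{X(\R)}^\diamond}$, and it shows the restriction map $r$ is surjective with your extension map as a right inverse. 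Where you genuinely depart from the paper is the reverse direction: the paper simply restricts characters from the bigger algebra to the smaller one and writes $M_t(SO_{X(\R)}^\diamond)\subset M_t(SO_{t,X(\R)})$, treating the restriction as a set inclusion without verifying that it is injective, i.e., that a character in the fiber is determined by its values on $SO_{t,X(\R)}$. You prove this injectivity explicitly, by the localization argument showing $\psi(a)=a(t)$ for all $a\in SO_{\lambda,X(\R)}$ with $\lambda\ne t$, and your argument checks out: smooth cut-offs with compactly supported derivatives satisfy $\lim_{x\to\mu}(D_\mu^k\psi_{\lambda,\omega})(x)=0$ for $k=1,2,3$ and every $\mu\in\dR$ (the paper itself uses this observation for functions of compact support in the proof of Theorem~\ref{th:LO-convolution-SO}), so they lie in $SO_\mu^3\subset SO_{\mu,X(\R)}\subset SO_{X(\R)}^\diamond$ and the splitting $a=\psi_{\lambda,\omega}a+(1-\psi_{\lambda,\omega})a$ stays inside the multiplier algebra; the fiber condition and multiplicativity then pin down both summands, and density of the subalgebra generated by $\bigcup_{\lambda\in\dR}SO_{\lambda,X(\R)}$ together with automatic continuity of characters finishes the argument. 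What your extra work buys is a fully justified bijection $r$ between the two fibers, whereas the paper's proof, read strictly, only produces a surjection $r$ with an injective right inverse and relies on the customary identification convention for the missing half; the price is the need to verify that every auxiliary cut-off is a genuine element of $SO_{X(\R)}^\diamond$, which you correctly identify as the delicate point and which replaces the $C^*$-compression machinery of Lemma~\ref{le:extension-characters} that is unavailable at the Banach-algebra level.
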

%%%----------------------------------------------------------------------------
\begin{proof}
Since $SO_{t,X(\R)}\subset SO_{X(\R)}^\diamond$ for every $t\in\dR$, we
conclude by the restriction of a multiplicative linear functional defined on
the bigger algebra to the smaller algebra that
$M(SO_{X(\R)}^\diamond)\subset M(SO_{t,X(\R)})$. Hence
%%%
\begin{equation}\label{eq:fibers-multipliers-SOt-SO-diamond-2}
M_t(SO_{X(\R)}^\diamond)\subset M_t(SO_{t,X(\R)}).
\end{equation}
%%%
On the other hand, in view of
Lemma~\ref{le:equality-of-maximal-ideal-spaces-local}, any
multiplicative linear functional $\xi\in M_t(SO_{t,X(\R)})$ admits a
unique extension $\xi'\in M(SO_t)$. Moreover,
$\xi'$ belongs to $M_t(SO_t)$ as well.
By Lemma~\ref{le:fibers-SOt-SO-diamond}, the functional
$\xi'\in M_t(SO_t)$ admits a unique extension $\xi''\in M_t(SO^\diamond)$.
It is clear that the restriction of $\xi''$ to $SO_{X(\R)}^\diamond$
belongs to $M_t(SO_{X(\R)}^\diamond)$. Thus $M_t(SO_{t,X(\R)})$
can be identified with a subset of $M_t(SO_{X(\R)}^\diamond)$:
%%%
\begin{equation}\label{eq:fibers-multipliers-SOt-SO-diamond-3}
M_t(SO_{t,X(\R)})\subset M_t(SO_{X(\R)}^\diamond).
\end{equation}
%%%
Combining \eqref{eq:fibers-multipliers-SOt-SO-diamond-2} and
\eqref{eq:fibers-multipliers-SOt-SO-diamond-3}, we arrive at
\eqref{eq:fibers-multipliers-SOt-SO-diamond-1}.
\end{proof}
%%%----------------------------------------------------------------------------
\subsection{Maximal ideal space of the Banach algebra
\boldmath{$SO_{X(\R)}^\diamond$}}
Now we are in a position to prove that the maximal ideal spaces
of the commutative Banach algebra $SO_{X(\R)}^\diamond$ and
the $C^*$-algebra $SO^\diamond$ can be identified as sets.
%%%----------------------------------------------------------------------------
\begin{theorem}
Let $X(\R)$ be a separable Banach function space such that the Hardy-Littlewood
maximal operator $\cM$ is bounded on the space $X(\R)$ and on its associate
space $X'(\R)$. Then the maximal ideal space of the Banach algebra
$SO_{X(\R)}^\diamond$ can be identified with the maximal ideal space
of the $C^*$-algebra $SO^\diamond$:
\[
M(SO_{X(\R)}^\diamond)=M(SO^\diamond).
\]
\end{theorem}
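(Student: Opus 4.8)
The plan is to identify the two maximal ideal spaces fibre by fibre over $\dR$ and then to reassemble the fibres. For the $C^*$-algebra $SO^\diamond$ this fibration is already available: since $C(\dR)\subset SO^\diamond$ and $M(C(\dR))=\dR$, the corollary to Lemma~\ref{le:fibers-SOt-SO-diamond} together with that lemma gives $M(SO^\diamond)=\bigcup_{t\in\dR}M_t(SO^\diamond)$. The corresponding decomposition $M(SO_{X(\R)}^\diamond)=\bigcup_{t\in\dR}M_t(SO_{X(\R)}^\diamond)$, equivalently $M(C(\dR)\cap SO_{X(\R)}^\diamond)=\dR$, is the only substantial point and is treated last.

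Granting both decompositions, I would fix $t\in\dR$ and chain the three lemmas of this section. Lemma~\ref{le:fibers-multipliers-SOt-SO-diamond} gives $M_t(SO_{X(\R)}^\diamond)=M_t(SO_{t,X(\R)})$; the identification $M(SO_{t,X(\R)})=M(SO_t)$ of Lemma~\ref{le:equality-of-maximal-ideal-spaces-local}, being realised by restriction and unique extension of characters, preserves their values on the common subalgebra $C(\dR)$ and hence carries $M_t(SO_{t,X(\R)})$ onto $M_t(SO_t)$; finally Lemma~\ref{le:fibers-SOt-SO-diamond} gives $M_t(SO_t)=M_t(SO^\diamond)$. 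Thus $M_t(SO_{X(\R)}^\diamond)=M_t(SO^\diamond)$ for every $t$, and taking the union over $\dR$ yields the set equality $M(SO_{X(\R)}^\diamond)=M(SO^\diamond)$. To see that this is an identification of compact Hausdorff spaces, note that the continuous embedding $SO_{X(\R)}^\diamond\subset SO^\diamond$ of Lemma~\ref{le:SO-continuous-embedding} induces by restriction a continuous map $M(SO^\diamond)\to M(SO_{X(\R)}^\diamond)$; it is injective because $SO_{X(\R)}^\diamond$ is $\|\cdot\|_{L^\infty}$-dense in $SO^\diamond$ (its closure contains every $SO_\lambda$ by Lemma~\ref{le:SO3-lambda-is-dense-in-SO-lambda}) and characters of the $C^*$-algebra $SO^\diamond$ are $L^\infty$-continuous, while surjectivity follows from the fibrewise matching just described, since by uniqueness of the extensions the composite identification coincides with the restriction map. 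Hence it is a homeomorphism.

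The heart of the matter is therefore the covering statement $M(C(\dR)\cap SO_{X(\R)}^\diamond)=\dR$, and the main obstacle is a localization lemma: for $t_0\in\dR$ and any $\phi\in C_0^\infty(\R)$ supported near $t_0$, multiplication by $\phi$ maps $SO_{X(\R)}^\diamond$ into $SO_{t_0,X(\R)}$. I would first record the elementary fact that $C_0^\infty(\R)\subset SO_s^3$ for every $s\in\dR$, since for a compactly supported smooth $h$ one has $(D_s^k h)(x)=(x-s)^k h^{(k)}(x)\to 0$ as $x\to s$; in particular $C_0^\infty(\R)\subset C(\dR)\cap SO_{s,X(\R)}$ for all $s$. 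Consequently, given $\chi\in M(SO_{X(\R)}^\diamond)$, its restriction to any $SO_{s,X(\R)}$ lies, via $M(SO_{s,X(\R)})=M(SO_s)$, in some fibre over $\dR$, and evaluating on $C_0^\infty(\R)$ shows that this fibre index is one and the same point $t_0$ for all $s$; that is, $\chi|_{C_0^\infty(\R)}=\delta_{t_0}$. The localization lemma is proved by approximating each factor $h_i\in SO_{s_i,X(\R)}$ of a typical element of $SO_{X(\R)}^\diamond$ by functions in $SO_{s_i}^3$ and observing that $\phi$ times such a product is again a compactly supported $SO_{t_0}^3$ function (the smoothness being lost only at the excluded point $t_0$), while multiplication by $\phi$ is bounded on $\cM_{X(\R)}$ because $W^0(\phi a)=W^0(\phi)W^0(a)$. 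With the lemma in hand, for arbitrary $f\in C(\dR)\cap SO_{X(\R)}^\diamond$ and a cutoff $\phi\equiv 1$ near $t_0$ one writes $\chi(f)-f(t_0)=\chi\big(\phi(f-f(t_0))\big)$, using that $(1-\phi)(f-f(t_0))$ vanishes near $t_0$ and is therefore annihilated by $\chi$ (multiply by a further cutoff on which $\chi$ takes the value $1$). Since $\phi(f-f(t_0))\in C(\dR)\cap SO_{t_0,X(\R)}$ vanishes at $t_0$ and $\chi|_{SO_{t_0,X(\R)}}\in M_{t_0}(SO_{t_0})$ agrees with $\delta_{t_0}$ on $C(\dR)$, this last term is $0$. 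Hence $\chi=\delta_{t_0}$ on $C(\dR)\cap SO_{X(\R)}^\diamond$, which yields the covering and completes the argument.
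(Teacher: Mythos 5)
Your central chain is exactly the paper's proof: the paper deduces $M_t(SO_{X(\R)}^\diamond)=M_t(SO_{t,X(\R)})=M_t(SO_t)=M_t(SO^\diamond)$ for each $t\in\dR$ from Lemmas~\ref{le:fibers-multipliers-SOt-SO-diamond}, \ref{le:equality-of-maximal-ideal-spaces-local} and \ref{le:fibers-SOt-SO-diamond}, takes the union over $\dR$, and stops there, claiming only an identification of sets and leaving the covering $M(SO_{X(\R)}^\diamond)=\bigcup_{t\in\dR}M_t(SO_{X(\R)}^\diamond)$ implicit in the fiber conventions of Section~\ref{sec:maximal-ideal-space}. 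Within your extra material, the determination of a common fibre index $t_0$ by evaluating on $C_0^\infty(\R)\subset\bigcap_{s\in\dR}SO_s^3$ is sound (modulo a harmless slip: $D_s^k$ is an iterated operator, so $D_s^kh$ is a sum of terms $(x-s)^jh^{(j)}(x)$ with $j\ge 1$ rather than $(x-s)^kh^{(k)}(x)$; the limit conclusion survives), and the homeomorphism upgrade via a continuous bijection from a compact space onto a Hausdorff space would be fine \emph{if} the covering were in place.

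The genuine gap is your localization lemma, on which the covering --- what you yourself call the heart of the matter --- rests: multiplication by a fixed $\phi\in C_0^\infty(\R)$ supported near $t_0$ does \emph{not} map $SO_{X(\R)}^\diamond$ into $SO_{t_0,X(\R)}$. Pick $s\in\operatorname{supp}\phi\setminus\{t_0\}$ with $\phi(s)\neq 0$ and $b\in SO_s^3$ having no limit at $s$ (such functions exist; slow oscillation at $s$ does not force continuity there, and this is the whole point of the class $SO_s$). Then $\phi b\in SO_{X(\R)}^\diamond$ is discontinuous at $s$, whereas every element of $SO_{t_0,X(\R)}$ is continuous on $\dR\setminus\{t_0\}$: indeed $SO_{t_0,X(\R)}\subset SO_{t_0}\subset C_b(\dR\setminus\{t_0\})$, because $\|\cdot\|_{L^\infty(\R)}\le\|\cdot\|_{\cM_{X(\R)}}$ (Theorem~\ref{th:continuous-embedding}) and $SO_{t_0}$ is uniformly closed --- the paper records this inclusion in the proof of Lemma~\ref{le:equality-of-maximal-ideal-spaces-local}. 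The same phenomenon defeats your proposed proof of the lemma: the approximants $b_m$ of an element of $SO_{X(\R)}^\diamond$ have singular sets $F_m$ that you cannot assume avoid $\operatorname{supp}\phi\setminus\{t_0\}$, so $\phi b_m$ need not lie in $SO_{t_0}^3$ (nor even in $SO_{t_0}$), and no single fixed $\phi$ avoids all the $F_m$ simultaneously; your parenthetical ``the smoothness being lost only at the excluded point $t_0$'' is precisely what fails. Consequently the concluding identity $\chi\big(\phi(f-f(t_0))\big)=0$ is unsupported even for continuous $f\in C(\dR)\cap SO_{X(\R)}^\diamond$: continuity gives smallness of $f-f(t_0)$ near $t_0$ only in the sup-norm, while $\chi$ is bounded only in the stronger multiplier norm, so no approximation argument in $\|\cdot\|_{\cM_{X(\R)}}$ comes for free. (The cutoff trick showing $\chi\big((1-\phi)(f-f(t_0))\big)=0$ is fine; it is the complementary piece your argument does not control.) To match the paper you can simply present the fibrewise chain and the union, as it does; but as a self-contained justification of the covering, your localization step must be repaired or replaced.
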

%%%----------------------------------------------------------------------------
\begin{proof}
It follows from Lemmas~\ref{le:fibers-multipliers-SOt-SO-diamond},
\ref{le:equality-of-maximal-ideal-spaces-local} and
\ref{le:fibers-SOt-SO-diamond} that for every $t\in\dR$,
%%%
\[
M_t(SO_{X(\R)}^\diamond)=M_t(SO_{t,X(\R)})=M_t(SO_t)=M_t(SO^\diamond).
\]
Hence
\[
M(SO_{X(\R)}^\diamond)
=
\bigcup_{t\in\dR}M_t(SO_{X(\R)}^\diamond)
=
\bigcup_{t\in\dR}M_t(SO^\diamond)
=M(SO^\diamond),
\]
which completes the proof.
\end{proof}
%%%----------------------------------------------------------------------------
\section{Maximal ideal spaces of the Calkin images of the Banach algebras
\boldmath{$\mathcal{MO}(\Phi)$} and \boldmath{$\mathcal{CO}(\Psi)$}}
\label{sec:maximal-ideal-spaces-Calkin-images}
\subsection{Maximal ideal space of the algebra
\boldmath{$\mathcal{MO}^\pi(\Phi)$}}
We start with the following known result \cite[Theorem~2.4]{HKK06}
(see also \cite[Theorem~3.1]{FKK-FS12}).
%%%----------------------------------------------------------------------------
\begin{theorem}\label{th:noncompactness-multiplication}
Let $X(\R)$ be a separable Banach function space and $a\in L^\infty(\R)$. Then
the  multiplication operator $aI$ is compact on the space $X(\R)$ if and only
if $a=0$ almost everywhere on $\R$.
\end{theorem}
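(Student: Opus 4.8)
The reverse implication is trivial: if $a=0$ almost everywhere, then $aI$ is the zero operator and hence compact. All the content lies in the forward implication, which I would establish by contraposition. The plan is to assume that $a$ does not vanish almost everywhere and to produce a bounded sequence in $X(\R)$ whose image under $aI$ has no norm-convergent subsequence, thereby contradicting compactness.

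First I would pass to a set on which $|a|$ is bounded below. Writing $\{a\ne 0\}=\bigcup_{n}\{|a|\ge 1/n\}$, the assumption $|\{a\ne 0\}|>0$ forces $|\{|a|\ge\delta\}|>0$ for some $\delta>0$; intersecting with a sufficiently large interval, I obtain a set $E$ with $|a|\ge\delta$ on $E$ and $0<|E|<\infty$. Since Lebesgue measure is non-atomic, the map $t\mapsto|E\cap(-\infty,t)|$ is continuous and increasing from $0$ to $|E|$, so $E$ splits into pairwise disjoint measurable pieces $E_n$ ($n\in\N$) of positive measure. By axioms (A1) and (A4), each $\chi_{E_n}$ is a nonzero element of $X(\R)$ with $0<\|\chi_{E_n}\|_{X(\R)}<\infty$, and I set $f_n:=\chi_{E_n}/\|\chi_{E_n}\|_{X(\R)}$, so that $\|f_n\|_{X(\R)}=1$.

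The decisive step is a uniform lower bound on $\|aIf_n-aIf_m\|_{X(\R)}$ for $n\ne m$. Because $E_n$ and $E_m$ are disjoint, the functions $af_n$ and $af_m$ have disjoint supports, so pointwise $|aIf_n-aIf_m|\ge(|a|/\|\chi_{E_n}\|_{X(\R)})\chi_{E_n}\ge(\delta/\|\chi_{E_n}\|_{X(\R)})\chi_{E_n}$; two applications of the lattice property (A2) then give $\|aIf_n-aIf_m\|_{X(\R)}\ge\delta$. Hence $\{aIf_n\}$ is $\delta$-separated and can contain no Cauchy subsequence, while $\{f_n\}$ is bounded; this contradicts the relative compactness of $\{aIf_n\}$ that compactness of $aI$ would guarantee. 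The argument is short, and the only points demanding care are the atomless splitting of $E$ and the bookkeeping of the norm estimate through (A2); I would remark that separability of $X(\R)$ is not in fact used in this direction, the reasoning relying only on the Banach-function-norm axioms (A1), (A2) and (A4).
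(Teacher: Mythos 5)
Your proof is correct. Note, however, that the paper does not prove this statement at all: it is quoted as a known result, with the proof delegated to \cite[Theorem~2.4]{HKK06} and \cite[Theorem~3.1]{FKK-FS12}. Your argument is the standard disjointification proof that underlies those references: pass to a set $E$ of finite positive measure with $|a|\ge\delta$ on $E$, split $E$ into countably many disjoint pieces of positive measure using the non-atomicity of Lebesgue measure (the continuity of $t\mapsto|E\cap(-\infty,t)|$ does this cleanly), and observe via (A1), (A2), (A4) that the normalized characteristic functions $f_n$ form a bounded sequence whose images $af_n$ have pairwise disjoint supports and are $\delta$-separated in norm, contradicting compactness. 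The key estimate is sound: from $|af_n-af_m|\ge(\delta/\|\chi_{E_n}\|_{X(\R)})\chi_{E_n}$ a.e., the lattice property yields $\|af_n-af_m\|_{X(\R)}\ge\delta$. Your closing remark is also accurate and worth making explicit: separability of $X(\R)$ plays no role in either direction, so the theorem holds for arbitrary Banach function spaces; the separability hypothesis in the paper's formulation simply matches the standing assumptions used elsewhere in that text. What your self-contained argument buys over the paper's citation is transparency about exactly which axioms of a Banach function norm are used --- only (A1), (A2), (A4), with neither the Fatou property (A3) nor (A5) entering.
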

%%%----------------------------------------------------------------------------
The next theorem says that one can identify the maximal ideal spaces
of the algebras $\mathcal{MO}^\pi(\Phi)$ and $\Phi$ for an arbitrary
unital $C^*$-subalgebra of $L^\infty(\R)$.
%%%----------------------------------------------------------------------------
\begin{theorem}\label{th:maximal-ideal-space-quotient-algebra-MO}
Let $X(\R)$ be a separable Banach function space.
If $\Phi$ is a unital $C^*$-subalagebra of $L^\infty(\R)$, then
the maximal ideal spaces of the commutative Banach algebra
$\mathcal{MO}^\pi(\Phi)$ and the commutative $C^*$-algebra $\Phi$
are homeomorphic:
\[
M(\mathcal{MO}^\pi(\Phi))=M(\Phi).
\]
\end{theorem}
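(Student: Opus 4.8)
The plan is to realize $\mathcal{MO}^\pi(\Phi)$ as a topologically isomorphic copy of the $C^*$-algebra $\Phi$ and then to invoke the standard contravariant correspondence between unital commutative Banach algebras and their maximal ideal spaces. First I would introduce the map $\gamma\colon\Phi\to\mathcal{MO}^\pi(\Phi)$ defined by $\gamma(a)=[aI]^\pi$. Since $(ab)I=(aI)(bI)$ and $(c+d)I=cI+dI$, this $\gamma$ is a unital algebra homomorphism, and it is surjective by the very definition of $\mathcal{MO}^\pi(\Phi)$. It is bounded: the lattice property (A2) of the Banach function norm yields $\|af\|_{X(\R)}\le\|a\|_{L^\infty(\R)}\|f\|_{X(\R)}$, so that $\|\gamma(a)\|_{\cB^\pi(X(\R))}\le\|aI\|_{\cB(X(\R))}\le\|a\|_{L^\infty(\R)}=\|a\|_\Phi$.

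The crucial step is injectivity. If $\gamma(a)=0$, then $aI\in\cK(X(\R))$, and Theorem~\ref{th:noncompactness-multiplication} forces $a=0$ almost everywhere, i.e. $a=0$ in $\Phi$. Hence $\gamma$ is a bounded bijective homomorphism between the unital commutative Banach algebras $\Phi$ and $\mathcal{MO}^\pi(\Phi)$, the latter being a Banach algebra as recorded in the introduction. Next I would apply the open mapping theorem: a bounded linear bijection between Banach spaces has a bounded inverse, so $\gamma$ is a topological isomorphism of Banach algebras.

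It then remains to transport this isomorphism to the maximal ideal spaces. I would consider the transpose $\widehat\gamma\colon M(\mathcal{MO}^\pi(\Phi))\to M(\Phi)$, $\chi\mapsto\chi\circ\gamma$. It is well defined, since $\chi\circ\gamma$ is a nonzero multiplicative linear functional on $\Phi$ (here $\gamma$ is a surjective unital homomorphism); it is injective because $\gamma$ is surjective, so two characters agreeing on the range of $\gamma$ agree everywhere; and it is surjective because any $\psi\in M(\Phi)$ is the image of $\psi\circ\gamma^{-1}$, which is a character of $\mathcal{MO}^\pi(\Phi)$, the inverse $\gamma^{-1}$ being a bounded homomorphism. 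Both $\widehat\gamma$ and its inverse $\psi\mapsto\psi\circ\gamma^{-1}$ are continuous for the Gelfand (weak-$*$) topologies, and as continuous bijections between compact Hausdorff spaces they are homeomorphisms; this gives $M(\mathcal{MO}^\pi(\Phi))=M(\Phi)$.

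The genuine mathematical content lies entirely in the injectivity step, that is, in Theorem~\ref{th:noncompactness-multiplication}: it is precisely the non-compactness of every nonzero multiplication operator that prevents the Calkin passage from collapsing $\Phi$, everything else being soft functional analysis. The one point I expect to deserve care is the completeness of $\mathcal{MO}^\pi(\Phi)$ tacitly used when invoking the open mapping theorem and the compactness of its maximal ideal space; this completeness is equivalent to a lower estimate $\|[aI]^\pi\|_{\cB^\pi(X(\R))}\ge c\,\|a\|_{L^\infty(\R)}$ (in fact equality with $c=1$, the essential norm of a multiplication operator coinciding with its sup-norm), which is exactly what makes the range of $\gamma$ closed and underlies the introduction's remark that $\mathcal{MO}^\pi(\Phi)$ is a Banach subalgebra of $\cB^\pi(X(\R))$.
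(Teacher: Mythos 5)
Your proposal is correct and follows essentially the same route as the paper: the paper uses the same map $a\mapsto[aI]^\pi$, surjective by definition and injective by Theorem~\ref{th:noncompactness-multiplication}, and then simply invokes \cite[Lemma~2.2.12]{K09} to pass from the algebraic isomorphism to the homeomorphism $M(\mathcal{MO}^\pi(\Phi))=M(\Phi)$ --- the step you carry out explicitly via the transpose map. Your open-mapping-theorem detour (and hence the extra worry about completeness of $\mathcal{MO}^\pi(\Phi)$) is not actually needed for that step, since characters of commutative Banach algebras are automatically continuous, so the purely algebraic isomorphism already makes both transposes weak-$*$ continuous, which is exactly what the cited lemma exploits.
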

%%%----------------------------------------------------------------------------
\begin{proof}
Consider the mapping $F:\Phi\to\mathcal{MO}^\pi(\Phi)$ defined
by $F(a)=[aI]^\pi$
for every $a\in\Phi$. It is clear that this mapping is surjective. If
$[aI]^\pi=[bI]^\pi$ for some $a,b\in\Phi$, then
$(a-b)I\in\cK(X(\R))$. It follows from
Theorem~\ref{th:noncompactness-multiplication} that $a=b$ a.e. on $\R$.
This implies that the mapping $F$ is injective. Thus,
$F:\Phi\to\mathcal{MO}^\pi(\Phi)$
is an algebraic isomorphism of commutative Banach algebras. It follows from
\cite[Lemma~2.2.12]{K09} that the maximal ideal spaces
$M(\mathcal{MO}^\pi(\Phi))$ and $M(\Phi)$ are homeomorphic.
\end{proof}
%%%----------------------------------------------------------------------------
\subsection{Maximal ideal space of the algebra
\boldmath{$\mathcal{CO}^\pi(\Psi)$}}
The following analogue of Theorem~\ref{th:noncompactness-multiplication}
for Fourier convolution operators was obtained recently by the authors
\cite[Theorem~1.1]{FKK-AFA}.
%%%----------------------------------------------------------------------------
\begin{theorem}
\label{th:noncompactness-convolution}
Let $X(\R)$ be a separable Banach function space such that the
Hardy-Littlewood maximal operator $\cM$ is bounded on $X(\R)$ and on its
associate space $X'(\R)$. Suppose that $b\in\cM_{X(\R)}$. Then the Fourier
convolution operator $W^0(a)$ is compact on the space $X(\R)$ if and only
if $b=0$ almost everywhere on $\R$.
\end{theorem}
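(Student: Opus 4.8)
The elementary implication is immediate: if $b=0$ almost everywhere, then $W^0(b)=0$, which is compact. The whole content lies in the converse, which I would package through limit operators. The guiding observation is that a Fourier convolution operator is invariant under spatial translations: writing $U_h f:=f(\cdot-h)$, the identity $U_{-h}W^0(b)U_h=W^0(b)$ holds for every $h\in\R$, because convolution commutes with translation. Consequently $W^0(b)$ should coincide with each of its own limit operators (the strong limits of $U_{-h_n}W^0(b)U_{h_n}$ along sequences $h_n\to\infty$). Granting the standard fact, recalled for Banach function spaces in Section~\ref{sec:proofs}, that every limit operator of a compact operator is the zero operator, compactness of $W^0(b)$ would force $W^0(b)=0$. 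Finally $W^0(b)=0$ yields $b=0$ almost everywhere: for every $f\in\cS_0(\R)$ one has $\cF^{-1}(b\,\cF f)=0$, so $b\,\widehat f=0$ almost everywhere, and as $\widehat f$ runs over a dense family of smooth compactly supported functions this gives $b=0$; here I use the density of $\cS_0(\R)$ in $X(\R)$ from Theorem~\ref{th:density-S0} and the embedding $\cM_{X(\R)}\subset L^\infty(\R)$ from Theorem~\ref{th:continuous-embedding} to place everything in $L^\infty(\R)$.

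To justify that limit operators of a compact operator vanish, I would argue that a compact operator maps bounded weakly null sequences to norm null sequences, and that $U_{h_n}f\rightharpoonup 0$ weakly in $X(\R)$ as $h_n\to\infty$. Separability of $X(\R)$ is crucial here: it guarantees that the norm of $X(\R)$ is absolutely continuous, hence that the dual $[X(\R)]^*$ coincides with the associate space $X'(\R)$, so weak convergence may be tested against $X'(\R)$ and reduces, via the K\"othe duality pairing $f\overline{g}\in L^1(\R)$, to a Riemann--Lebesgue type statement. For this reason I would in fact prefer to run the limit-operator argument with \emph{modulations} $E_\lambda f:=e^{i\lambda\,\cdot}f$ in place of spatial translations: since $|E_\lambda f|=|f|$ pointwise, each $E_\lambda$ is a genuine isometry of $X(\R)$, and $E_\lambda f\rightharpoonup 0$ as $\lambda\to\infty$ is exactly Riemann--Lebesgue applied to $f\overline{g}\in L^1(\R)$.

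The main obstacle is precisely the interplay between these two symmetries. Spatial translations $U_h$ reproduce $W^0(b)$ (the symbol is unchanged) but are \emph{not} uniformly bounded on a general Banach function space: on a weighted space $L^p(\R,w)$ with an $A_p$ weight growing at infinity one has $\sup_h\|U_h\|_{\cB(X(\R))}=\infty$, so the translation-based limit-operator theory does not apply verbatim. Modulations $E_\lambda$, on the other hand, are isometries but conjugate $W^0(b)$ into $W^0\big(b(\cdot-\lambda)\big)$, so they only probe the symbol near $\pm\infty$ and say nothing about the behaviour of $b$ at a finite frequency where it may be nonzero. Reconciling the two---constructing a bounded, weakly null singular sequence that both isolates a frequency at which $b\neq 0$ (via a modulation bringing that frequency into play) and escapes to spatial infinity (to secure weak nullity), while keeping the $X(\R)$ norms of the sequence and of its image comparably bounded below---is the technical heart of the proof. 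This is exactly where the boundedness of the maximal operator $\cM$ on both $X(\R)$ and on $X'(\R)$ must be exploited, to compare local averages of the test function and of its image and thereby control the otherwise uncontrolled growth of spatial translates.
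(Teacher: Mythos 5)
Note first that the paper does not prove this theorem at all: it is quoted from \cite[Theorem~1.1]{FKK-AFA} (``obtained recently by the authors''), so the benchmark is the proof in that earlier paper, not anything in the present one. Within your proposal, the things you actually prove are correct: the trivial direction; the fact that a compact operator on any Banach space maps weakly null sequences to norm null ones; the identification $[X(\R)]^*=X'(\R)$ via absolute continuity of the norm on a separable Banach function space; and the vanishing of \emph{modulation} limit operators of compact operators via Riemann--Lebesgue applied to $fg\in L^1(\R)$ --- this last item is literally Lemma~\ref{le:LO-compact}, i.e.\ \cite[Lemma~3.2]{FKK-FS12}, and your preference for the isometries $e_\lambda I$ over translations, which are not uniformly bounded on a general (e.g.\ weighted) Banach function space, is exactly the right instinct.

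The genuine gap is that you stop precisely where the theorem begins: your final paragraph announces that one must construct a bounded weakly null sequence that simultaneously isolates a finite frequency where $b\neq 0$ and escapes to spatial infinity, and that the boundedness of $\cM$ on $X(\R)$ and $X'(\R)$ ``must be exploited'' there --- but no construction is given, so the nontrivial implication is unproved. Worse, the framework you commit to provably cannot deliver it. Take $b\in C_0^\infty(\R)$, $b\not\equiv 0$. For every sequence $h_n\to+\infty$ and every $f\in\cS_0(\R)$ one has $e_{h_n}W^0(b)e_{h_n}^{-1}f=\cF^{-1}\big[b(\cdot+h_n)\widehat{f}\,\big]=0$ for all large $n$, since the supports of $b(\cdot+h_n)$ and $\widehat{f}$ eventually separate; as $\|e_{h_n}W^0(b)e_{h_n}^{-1}\|_{\cB(X(\R))}=\|W^0(b)\|_{\cB(X(\R))}$, the density of $\cS_0(\R)$ (Theorem~\ref{th:density-S0}) together with \cite[Lemma~1.4.1(ii)]{RSS11} gives $\slim e_{h_n}W^0(b)e_{h_n}^{-1}I=0$. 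Thus \emph{every} modulation limit operator of $W^0(b)$ vanishes although $b\not\equiv 0$ (and $W^0(b)$ is in fact noncompact on $X(\R)$, by the very theorem under discussion). Hence any argument that uses compactness only through the vanishing of modulation limit operators --- which is all your proposal extracts from it --- cannot conclude $b=0$: these limit operators are structurally blind to the symbol at finite points, which is exactly the case the converse must handle. This is consistent with how the present paper deploys limit operators: they detect the value $b(\xi)$ only at fibers over infinity, $\xi\in M_\infty(SO^\diamond)$, and only for slowly oscillating symbols (Theorem~\ref{th:LO-convolution-SO}), while the noncompactness statement for arbitrary $b\in\cM_{X(\R)}$ is delegated to \cite{FKK-AFA}, whose proof is not a limit-operator argument but a direct construction of normalized test functions with compactly supported Fourier transform, the hypotheses on $\cM$ entering through concrete norm and pointwise estimates for such band-limited functions (together with the inequality of Theorem~\ref{th:continuous-embedding}). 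In short, your diagnosis of the obstruction is accurate and well informed, but the proposal contains no proof of the hard implication, and the route it sets up cannot be completed to one.
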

%%%----------------------------------------------------------------------------
The next theorem is an analogue of
Theorem~\ref{th:maximal-ideal-space-quotient-algebra-MO}
for Fourier multipliers.
%%%----------------------------------------------------------------------------
\begin{theorem}\label{th:maximal-ideal-space-quotient-algebra-CO}
Let $X(\R)$ be a separable Banach function space such that the Hardy-Littlewood
maximal operator $\cM$ is bounded on $X(\R)$ and on its associate space
$X'(\R)$. If $\Psi$ is a unital Banach subalagebra of
$\cM_{X(\R)}$, then the maximal ideal spaces of the commutative
Banach algebras $\mathcal{CO}^\pi(\Psi)$ and $\Psi$ are
homeomorphic:
\[
M(\mathcal{CO}^\pi(\Psi))=M(\Psi).
\]
\end{theorem}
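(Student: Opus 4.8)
The plan is to follow the proof of Theorem~\ref{th:maximal-ideal-space-quotient-algebra-MO} almost verbatim, replacing the multiplication operators $aI$ by the convolution operators $W^0(b)$ and the compactness criterion of Theorem~\ref{th:noncompactness-multiplication} by its convolution analogue, Theorem~\ref{th:noncompactness-convolution}. Concretely, I would introduce the map $G:\Psi\to\mathcal{CO}^\pi(\Psi)$ given by $G(b)=[W^0(b)]^\pi$ and show that it is an algebraic isomorphism of commutative unital Banach algebras; the homeomorphism of the maximal ideal spaces will then follow from \cite[Lemma~2.2.12]{K09}, exactly as in the multiplication case.

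First I would record that $G$ is a unital algebra homomorphism. Surjectivity is immediate from the definition of $\mathcal{CO}^\pi(\Psi)$. Additivity and homogeneity follow from the linearity of the map $b\mapsto W^0(b)$, while multiplicativity follows from the identity $W^0(bc)=W^0(b)W^0(c)$, which is built into the fact (the Corollary following Theorem~\ref{th:continuous-embedding}) that $\cM_{X(\R)}$ is a Banach algebra with norm $\|b\|_{\cM_{X(\R)}}=\|W^0(b)\|_{\cB(X(\R))}$; passing to Calkin cosets then gives $G(bc)=[W^0(b)W^0(c)]^\pi=G(b)G(c)$. Finally, $G$ sends the unit of $\Psi$ to $[W^0(1)]^\pi=I^\pi$, the unit of $\mathcal{CO}^\pi(\Psi)$.

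The only step requiring genuine input, and hence the main obstacle, is injectivity. Suppose $G(b)=G(c)$ for some $b,c\in\Psi$. Then $[W^0(b-c)]^\pi=0$, that is, $W^0(b-c)\in\cK(X(\R))$. Since $\Psi$ is a subalgebra of $\cM_{X(\R)}$, we have $b-c\in\cM_{X(\R)}$, so Theorem~\ref{th:noncompactness-convolution} applies and yields $b-c=0$ almost everywhere; thus $b=c$ and $G$ is injective. This is precisely the point at which the hypotheses on $X(\R)$ (separability together with the boundedness of $\cM$ on $X(\R)$ and on $X'(\R)$) enter, through Theorem~\ref{th:noncompactness-convolution}; everything else is purely formal.

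Having established that $G:\Psi\to\mathcal{CO}^\pi(\Psi)$ is an algebraic isomorphism of commutative unital Banach algebras, I would conclude by invoking \cite[Lemma~2.2.12]{K09} to transfer it to a homeomorphism of the associated maximal ideal spaces, obtaining $M(\mathcal{CO}^\pi(\Psi))=M(\Psi)$. As all steps beyond injectivity are routine, no further difficulties are anticipated.
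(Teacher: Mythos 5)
Your proposal is correct and coincides with the paper's own proof: the same map $b\mapsto[W^0(b)]^\pi$, injectivity via the noncompactness criterion (Theorem~\ref{th:noncompactness-convolution}), and the transfer to maximal ideal spaces via \cite[Lemma~2.2.12]{K09}. Your only addition is the explicit verification that the map is a unital homomorphism, which the paper leaves implicit.
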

%%%----------------------------------------------------------------------------
\begin{proof}
The proof is analogous to the proof of
Theorem~\ref{th:maximal-ideal-space-quotient-algebra-MO}.
Consider the mapping $F:\Psi\to\mathcal{CO}^\pi(\Psi)$ defined
by $F(a)=[W^0(a)]^\pi$ for every $a\in\Psi$. It is obvious that this mapping
is surjective. If
$[W^0(a)]^\pi=[W^0(b)]^\pi$ for some $a,b\in\Psi$, then
$W^0(a-b)=W^0(a)-W^0(b)\in\cK(X(\R))$. By
Theorem~\ref{th:noncompactness-convolution},
we conclude that $a=b$ a.e. on $\R$. Therefore, the mapping $F$ is injective.
Thus, $F:\Psi\to\mathcal{CO}^\pi(\Psi)$ is an algebraic
isomorphism of commutative
Banach algebras. In this case it follows from \cite[Lemma~2.2.12]{K09}
that the maximal ideal spaces $M(\mathcal{CO}^\pi(\Psi))$ and
$M(\Psi)$ are homeomorphic.
\end{proof}
%%%----------------------------------------------------------------------------
\section{Applications of the method of limit operators}
\label{sec:proofs}
\subsection{Known {result} about limit operators on Banach
function spaces}
Let $X(\R)$ be a Banach function space. For a sequence of operators
$\{A_n\}_{n\in\N}\subset\cB(X(\R))$, let
\[
\slim A_n
\]
denote the strong limit of this sequence, if it exists.
For $\lambda,x\in\R$, consider the function
\[
e_\lambda(x):=e^{i\lambda x}.
\]

Let $T\in\cB(X(\R))$ and let $h=\{h_n\}_{n\in\N}$ be a sequence of numbers
$h_n>0$ such that $h_n\to +\infty$ as $n\to\infty$. The strong limit
\[
T_{h}:=\operatornamewithlimits{s-\lim}_{n\to\infty}
e_{h_n}Te_{h_n}^{-1}I
\]
is called the limit operator of $T$ related to the sequence
$h=\{h_n\}_{n\in\N}$, if it exists.

{
In our previous paper \cite{FKK-FS12} we calculated the limit
operators for all compact operators.
}
%%%----------------------------------------------------------------------------
\begin{lemma}[{\cite[Lemma~3.2]{FKK-FS12}}]
\label{le:LO-compact}
Let $X(\R)$ be a separable Banach function space and $K$ be a compact operator
on $X(\R)$. Then for every sequence $\{h_n\}_{n\in\N}$ of positive numbers
satisfying $h_n\to+\infty$ as $n\to\infty$, one has
\[
\slim e_{h_n}Ke_{h_n}^{-1}I=0
\]
on the space $X(\R)$.
\end{lemma}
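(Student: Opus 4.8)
The plan is to exploit two facts: that multiplication by the unimodular function $e_{h_n}$ is isometric on $X(\R)$, and that compact operators are completely continuous. Writing $A_n:=e_{h_n}Ke_{h_n}^{-1}I=(e_{h_n}I)K(e_{-h_n}I)$ and using that the norm on a Banach function space depends only on the modulus of a function, one has $\|A_nf\|_{X(\R)}=\|K(e_{-h_n}f)\|_{X(\R)}$ for every $f\in X(\R)$. Hence it suffices to prove that $K(e_{-h_n}f)\to 0$ in the norm of $X(\R)$ for each fixed $f\in X(\R)$.

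First I would show that $e_{-h_n}f\rightharpoonup 0$ weakly in $X(\R)$ for every $f\in X(\R)$. Since $X(\R)$ is separable, its norm is absolutely continuous, and therefore the dual $[X(\R)]^*$ is isometrically identified with the associate space $X'(\R)$ (see, e.g., \cite{BS88}). Thus weak convergence amounts to testing against functions $g\in X'(\R)$. By the H\"older inequality for Banach function spaces, $fg\in L^1(\R)$ whenever $f\in X(\R)$ and $g\in X'(\R)$, so the Riemann--Lebesgue lemma gives
\[
\int_\R e^{-ih_nx}f(x)g(x)\,dx\to 0,\qquad h_n\to+\infty,
\]
for every such $g$. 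This is exactly the statement that $e_{-h_n}f\rightharpoonup 0$.

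To finish, I would use that a compact operator maps weakly convergent sequences to norm convergent ones: the sequence $\{e_{-h_n}f\}$ is bounded (being isometric images of $f$), so $\{K(e_{-h_n}f)\}$ is relatively compact, while boundedness of $K$ forces $K(e_{-h_n}f)\rightharpoonup 0$; consequently every norm-convergent subsequence must have limit $0$, and hence the whole sequence converges to $0$ in norm. Combined with the reduction above this yields $\slim A_n=0$. The only genuinely nontrivial ingredient is the identification $[X(\R)]^*=X'(\R)$, which is what converts the elementary Riemann--Lebesgue computation against associate-space functions into honest weak convergence in $X(\R)$; once this is in hand, everything else is a routine invocation of the complete continuity of compact operators.
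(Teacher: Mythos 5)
Your proposal is correct and takes essentially the same route as the proof of this lemma in the cited source \cite[Lemma~3.2]{FKK-FS12}: reduce via the isometry of modulation operators to showing $K(e_{-h_n}f)\to 0$ in norm, identify $[X(\R)]^*$ with $X'(\R)$ (legitimate, since separability of a Banach function space over $(\R,dx)$ implies absolute continuity of the norm, see \cite[Chap.~1]{BS88}), deduce $e_{-h_n}f\rightharpoonup 0$ from H\"older's inequality and the Riemann--Lebesgue lemma, and conclude by the complete continuity of compact operators. All steps, including the subsequence argument for complete continuity, are sound, so there is nothing to correct.
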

%%%----------------------------------------------------------------------------
\subsection{Limit operators for Fourier convolution operators with symbols
in the algebra \boldmath{$SO_{X(\R)}^\diamond$}}
Now we will calculate the limit operators for the Fourier convolution
operator with a slowly oscillating symbol.
%%%----------------------------------------------------------------------------
\begin{theorem}\label{th:LO-convolution-SO}
Let $X(\R)$ be a separable Banach function space such that the Hardy-Littlewood
maximal operator $\cM$ is bounded on the space $X(\R)$ and on its associate
space $X'(\R)$. If $b\in SO_{X(\R)}^\diamond$, then for every
$\xi\in M_\infty(SO^\diamond)$ there exists a sequence
$\{h_n\}_{n\in\N}$ of positive numbers such that
$h_n\to+\infty$ as $n\to\infty$ and
%%%
\begin{equation}\label{eq:LO-convolution-SO-1}
\slim e_{h_n}W^0(b)e_{h_n}^{-1}I=b(\xi)I
\end{equation}
%%%
on the space $X(\R)$.
\end{theorem}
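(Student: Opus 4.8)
The plan is to reduce the computation of the limit operator to an asymptotic analysis of the translated symbols $b_{h_n}(x):=b(x+h_n)$, and to realize the character $\xi$ along a single sequence tending to $+\infty$. First I would record the modulation identity $\cF e_{h_n}I=\tau_{h_n}\cF$, where $(\tau_c g)(x):=g(x+c)$, which is immediate from the definition of $\cF$. A short computation then shows, for every sequence $\{h_n\}$ of positive numbers, that $e_{h_n}W^0(b)e_{h_n}^{-1}I=W^0(b_{h_n})$ with $b_{h_n}(x)=b(x+h_n)$, first on $\cS_0(\R)$ (or $L^2(\R)\cap X(\R)$) and then, by boundedness, on all of $X(\R)$. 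Since multiplication by the unimodular function $e_{h_n}$ is an isometric isomorphism of $X(\R)$ (the norm depends only on the modulus), one gets $b_{h_n}\in\cM_{X(\R)}$ and $\|W^0(b_{h_n})\|_{\cB(X(\R))}=\|W^0(b)\|_{\cB(X(\R))}$ for all $n$. Thus $\{W^0(b_{h_n})\}$ is uniformly bounded, and since $W^0(b(\xi))=b(\xi)I$, it suffices by Theorem~\ref{th:density-S0} to prove $\|W^0(b_{h_n}-b(\xi))f\|_{X(\R)}\to 0$ for $f$ in the dense set $\cS_0(\R)$.

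For the choice of sequence, I use the identification $\xi\in M_\infty(SO^\diamond)=M_\infty(SO_\infty)$ from Lemma~\ref{le:fibers-SOt-SO-diamond}. Fixing $b$, I would select a countable family $\{b^{(m)}\}$ of finite sums of finite products of functions from $\bigcup_\lambda SO_\lambda^3$ with $\|b-b^{(m)}\|_{\cM_{X(\R)}}\to 0$ (possible by the very definition of $SO_{X(\R)}^\diamond$), and let $\fD$ be the separable commutative $C^*$-subalgebra of $SO^\diamond$ generated by $C(\dR)$, by $b$, and by all the countably many smooth generators occurring in the $b^{(m)}$. Then $M(\fD)$ is metrizable, and the task becomes to show that $\xi|_\fD$ lies in the closure of the point evaluations $\{\delta_x:x>N\}$ for every $N$. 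Granting this, metrizability yields one sequence $h_n\to+\infty$ with $a(h_n)\to\xi(a)$ for every $a\in\fD$; putting all the smooth approximants of $b$ into the single algebra $\fD$ is exactly what circumvents the diagonal problem of letting the sequence depend on the approximation.

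With this sequence fixed, I would establish single-generator convergence $W^0(g_{h_n})\to\xi(g)I$ strongly for each generator $g\in SO_\lambda^3$. Localizing by a fixed $v\in V(\R)$ equal to $1$ on $\operatorname{supp}\widehat f\subset[-R,R]$ and compactly supported, one may replace the symbol by $(g_{h_n}-\xi(g))v\in V(\R)$ and invoke the Stechkin inequality (Theorem~\ref{th:Stechkin}); here $\|(g_{h_n}-\xi(g))v\|_V\to 0$ because its sup-norm on $\operatorname{supp}v$ is dominated by $\operatorname{osc}(g,[h_n-2R,h_n+2R])+|g(h_n)-\xi(g)|\to 0$, while its variation equals $V(g;[h_n-2R,h_n+2R])=O(\|D_\lambda g\|_{L^\infty(\R)}/h_n)\to 0$, using $g'(t)=O(1/t)$ at infinity coming from boundedness of $D_\lambda g$. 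For finite $\lambda$ one has $\xi(g)=g(\infty)$ and $g(h_n)\to g(\infty)$ automatically; for $\lambda=\infty$ the relation $g(h_n)\to\xi(g)$ is precisely what the sequence provides. Since $W^0$ is multiplicative on multipliers and products of uniformly bounded strongly convergent sequences converge strongly, $W^0(b^{(m)}_{h_n})\to b^{(m)}(\xi)I$ strongly for each $m$. A three-term estimate using $\|W^0(b_{h_n})-W^0(b^{(m)}_{h_n})\|_{\cB(X(\R))}=\|b-b^{(m)}\|_{\cM_{X(\R)}}$ and $|b(\xi)-b^{(m)}(\xi)|\le\|b-b^{(m)}\|_{L^\infty(\R)}\le\|b-b^{(m)}\|_{\cM_{X(\R)}}$ (Theorem~\ref{th:continuous-embedding}) then gives $\limsup_n\|W^0(b_{h_n})f-b(\xi)f\|_{X(\R)}\le 2\|b-b^{(m)}\|_{\cM_{X(\R)}}\|f\|_{X(\R)}\to 0$.

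I expect the main obstacle to be the selection step, and in particular the fact that $\xi$ must be approachable from $+\infty$ at all: since $\dR$ is the one-point compactification, a priori a character in the fiber over $\infty$ could be a limit only of evaluations $\delta_x$ with $x\to-\infty$, whereas the theorem demands $h_n\to+\infty$. This is resolved by the definition of slow oscillation at $\infty$, which controls the oscillation over $[-x,-rx]\cup[rx,x]$ and therefore forces $|g(x)-g(-x)|\to 0$ as $x\to+\infty$ for every $g\in SO_\infty$. Consequently any character in $M_\infty(\fD)$ obtained as a limit of evaluations from $-\infty$ is equally a limit from $+\infty$, so indeed $\xi|_\fD\in\overline{\{\delta_x:x>N\}}$ for all $N$, which is exactly what the metrizability argument needs.
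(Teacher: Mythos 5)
Your proof is correct, and while it shares the paper's overall skeleton (conjugate to obtain $W^0(b(\cdot+h_n))$, pick one sequence realizing $\xi$, approximate $b$ by smooth local symbols, localize by a fixed compactly supported cutoff equal to $1$ on $\operatorname{supp}\cF f$, and finish by uniform boundedness plus density of $\cS_0(\R)$), it replaces both technical pillars by genuinely different arguments. First, the selection step: the paper simply cites \cite[Corollary~3.3]{BBK04} or \cite[Proposition~3.1]{KILH13a} for the existence of one sequence $h_n\to+\infty$ with $b_{m,\infty}(h_n)\to\xi(b_{m,\infty})$ for the at most countable family of $SO_\infty^3$-components of the approximants, handling the components with $\lambda\ne\infty$ by continuity at $\infty$ exactly as you do; your separable subalgebra $\fD$, the metrizability of $M(\fD)$, the density of point evaluations in the fiber, and the symmetric-oscillation fact $|g(x)-g(-x)|\to 0$ together constitute a self-contained proof of that cited fact, and your diagnosis of the $\pm\infty$ issue pinpoints exactly why it holds (the union $[-x,-rx]\cup[rx,x]$ in the definition of slow oscillation at $\infty$). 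One small expository point: you state the symmetric property only for $g\in SO_\infty$, but you need it for every element of $\fD$; it does hold there, since each generator has it (the finite-$\lambda$ generators and $C(\dR)$ being continuous at $\infty$) and the property is stable under sums, products of bounded functions, and uniform limits. Second, the norm estimate: you localize by $v\in V(\R)$ and invoke the Stechkin-type inequality (Theorem~\ref{th:Stechkin}), which requires only sup-norm and first-order variation control, via $|g'(t)|=O(1/t)$ from the boundedness of $D_\lambda g$; the paper instead estimates the $SO_\infty^3$-norm of $[b_m(\cdot+h_n)-b_m(h_n)]\psi$ through Theorem~\ref{th:boundedness-convolution-SO}, which costs the third-order Leibniz computations \eqref{eq:LO-convolution-SO-12}--\eqref{eq:LO-convolution-SO-19}; your route is shorter and loses nothing, since the localized symbol is smooth with compact support and hence lies in $V(\R)$. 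Finally, you approximate $b$ by sums of \emph{products} of generators and handle these by multiplicativity of $W^0$ together with strong convergence of products of uniformly bounded strongly convergent sequences; this matches the literal definition of $SO_{X(\R)}^\diamond$ as the generated Banach algebra, whereas the paper writes the approximants as plain sums $\sum_{\lambda\in F_m}b_{m,\lambda}$, implicitly relying on the reduction of products to such sums, so your version is the more robust of the two.
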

%%%----------------------------------------------------------------------------
\begin{proof}
This statement is proved by analogy with \cite[Lemma~5.1]{KILH13a}.
In view of Lemma~\ref{le:SO-continuous-embedding},
$SO_{X(\R)}^\diamond\subset SO^\diamond$. Therefore every
$\xi\in M_\infty(SO^\diamond)$ is a multiplicative linear functional
on $SO_{X(\R)}^\diamond$, that is, $b(\xi)$ is well defined. By the definition
of $SO_{X(\R)}^\diamond$, if $b\in SO_{X(\R)}^\diamond$, then there is a
sequence
%%%
\[
b_m=\sum_{\lambda\in F_m}b_{m,\lambda},\quad m\in\N,
\]
%%%
where $F_m\subset\dR$ are finite sets and $b_{m,\lambda}\in SO_\lambda^3$
for $\lambda\in F_m$ and all $m\in\N$, such that
%%%
\begin{equation}\label{eq:LO-convolution-SO-2}
\lim_{m\to\infty}\|b_m-b\|_{\cM_{X(\R)}}=0.
\end{equation}
%%%
By Lemma~\ref{le:fibers-SOt-SO-diamond},
$M_\infty(SO^\diamond)=M_\infty(SO_\infty)$. Fix
$\xi\in M_\infty(SO^\diamond)=M_\infty(SO_\infty)$. Assume first that the set
\[
B_\infty:=\{b_{m,\infty}\in SO_\infty^3\ :\ m\in\N\}
\]
is not empty. Since the set $B_\infty$ is at most countable, it follows from
\cite[Corollary~3.3]{BBK04} or \cite[Proposition~3.1]{KILH13a} that there
exists a sequence $\{h_n\}_{n\in\N}$ such that $h_n\to+\infty$ as $n\to\infty$
and
%%%
\begin{equation}\label{eq:LO-convolution-SO-3}
\xi(b_{m,\infty})=\lim_{n\to\infty} b_{m,\infty}(h_n)
\quad\mbox{for all}\quad b_{m,\infty}\in B_\infty.
\end{equation}
%%%
As the functions $b_{m,\lambda}$ are continuous at $\infty$ if
$\lambda\ne\infty$, we see that
\begin{equation}\label{eq:LO-convolution-SO-4}
\xi(b_{m,\lambda})=b_{m,\lambda}(\infty)=\lim_{n\to\infty} b_{m,\lambda}(h_n)
\quad\mbox{for all}\quad \lambda\in\bigcup_{m\in\N}F_m\setminus\{\infty\}.
\end{equation}
%%%
Combining \eqref{eq:LO-convolution-SO-3} and \eqref{eq:LO-convolution-SO-4},
for every $m\in\N$, we get
%%%
\begin{align}\label{eq:LO-convolution-SO-5}
\xi(b_m)
&=
\sum_{\lambda\in F_m} \xi(b_{m,\lambda})
=
\sum_{\lambda\in F_m}
\lim_{n\to\infty}b_{m,\lambda}(h_n)
\nonumber\\
&=
\lim_{n\to\infty}\sum_{\lambda\in F_m}b_{m,\lambda}(h_n)
=
\lim_{n\to\infty}b_m(h_n).
\end{align}
%%%
If the set $B_\infty$ is empty, we can take an arbitrary sequence
$\{h_n\}_{n\in\N}$ such that $h_n\to+\infty$ as $n\to\infty$.

Let $f\in\cS_0(\R)$. Then, by a smooth version of Urysohn's lemma
(see, e.g., \cite[Proposition~6.5]{F09}), there is a function
$\psi\in C_0^\infty(\R)$ such that $0\le\psi\le 1$,
$\operatorname{supp}\cF f\subset\operatorname{supp}\psi$
and $\psi|_{\operatorname{supp}\cF f}=1$. Therefore, for all $n\in\N$,
%%%
\begin{align*}
e_{h_n}W^0(b)e_{h_n}^{-1}f-b(\xi)f
&=
W^0[b(\cdot+h_n)]f-\xi(b)f
\\
&=
\cF^{-1}[b(\cdot+h_n)-\xi(b)]\psi\cF f
\end{align*}
%%%
and
%%%
\begin{equation}\label{eq:LO-convolution-SO-6}
\big\|\big(e_{h_n}W^0(b)e_{h_n}^{-1}-b(\xi)\big)f\big\|_{{X(\R)}}
\le
\big\|\big[b(\cdot+h_n)-\xi(b)\big]\psi\|_{\cM_{X(\R)}}\|f\|_{X(\R)}.
\end{equation}
%%%
Since $\cM_{X(\R)}$ is translation-invariant and $\xi\in M_\infty(SO^\diamond)$
is a multiplicative linear functional on $SO_{X(\R)}^\diamond$, we infer for
all $m,n\in\N$ that
%%%
\begin{align}
\big\|\big[b(\cdot+h_n)-\xi(b)\big]\psi\|_{\cM_{X(\R)}}
\le&
\big\|\big[b(\cdot+h_n)-b_m(\cdot+h_n)\big]\psi\|_{\cM_{X(\R)}}
\nonumber\\
&+
\big\|\big[b_m(\cdot+h_n)-\xi(b_m)\big]\psi\|_{\cM_{X(\R)}}
\nonumber\\
&+
\big\|\xi(b_m)-\xi(b)\big]\psi\|_{\cM_{X(\R)}}
\nonumber\\
\le &
2\|b-b_m\|_{\cM_{X(\R)}}\|\psi\|_{\cM_{X(\R)}}
\nonumber\\
&+
\big\|\big[b_m(\cdot+h_n)-\xi(b_m)\big]\psi\|_{\cM_{X(\R)}}.
\label{eq:LO-convolution-SO-7}
\end{align}

Fix $\eps>0$.
By Theorem~\ref{th:Stechkin}, $\|\psi\|_{\cM_{X(\R)}}<\infty$.
It follows from \eqref{eq:LO-convolution-SO-2}
that there exists a sufficiently large number $m\in\N$
(which we fix until the end of the proof)
such that
%%%
\begin{equation}\label{eq:LO-convolution-SO-8}
2\|b-b_m\|_{\cM_{X(\R)}}\|\psi\|_{\cM_{X(\R)}}<\eps/2.
\end{equation}

Let
\[
\Lambda:=\begin{cases}
\max\limits_{\lambda\in F_m\setminus\{\infty\}}|\lambda|
&\mbox{if}\quad F_m\setminus\{\infty\}\ne\emptyset,
\\
0 &\mbox{if}\quad F_m\setminus\{\infty\}=\emptyset,
\end{cases}
\]
let $K:=\operatorname{supp}\psi$ and
\[
k:=\max\{-\inf K,\sup K\}\in[0,\infty).
\]
For $x\in K$ and $n\in\N$, let $I_n(x)$ be the segment with the endpoints $h_n$
and $x+h_n$. Then $I_n(x)\subset [h_n-k,h_n+k]$.
Since $h_n\to+\infty$ as $n\to\infty$, there exists $N_1\in\N$ such that for
all $n>N_1$, one has
\[
I_n(x)\subset[h_n-k,h_n+k]\subset(\Lambda,\infty).
\]
For all $n>N_1$, we have
%%%
\begin{align}
\left\|
\big[b_m(\cdot+h_n)-\xi(b_m)\big]\psi
\right\|_{\cM_{X(\R)}}
\le&
\left\|\big[b_m(\cdot+h_n)-b_m(h_n)\big]\psi
\right\|_{\cM_{X(\R)}}
\nonumber\\
&+
\big|b_m(h_n)-\xi(b_m)\big|\|\psi\|_{\cM_{X(\R)}},
\label{eq:LO-convolution-SO-9}
\end{align}
%%%
where the functions
$\big[b_m(\cdot+h_n)-b_m(h_n)\big]\psi$ for $n>N_1$
belong to $SO_\infty^3$ because they are three times continuously
differentiable functions of compact support.

By \eqref{eq:LO-convolution-SO-5}, there exists $N_2\in\N$ such that
$N_2\ge N_1$ and for all $n>N_2$,
%%%
\begin{equation}\label{eq:LO-convolution-SO-10}
\big|b_m(h_n)-\xi(b_m)\big|\|\psi\|_{\cM_{X(\R)}}<\eps/4.
\end{equation}
%%%
On the other hand, since
$\big[b_m(\cdot+h_n)-b_m(h_n)\big]\psi\in SO_\infty^3$
for all $n>N_2$, it follows from Theorem~\ref{th:boundedness-convolution-SO}
that there exists a constant $c_X>0$ depending only on the space $X(\R)$
such that for all $n>N_2$,
%%%
\begin{align}
&\left\|\big[b_m(\cdot+h_n)-b_m(h_n)\big]\psi
\right\|_{\cM_{X(\R)}}
\nonumber
\\
&\quad\quad\le c_X
\left\|
\big[b_m(\cdot+h_n)-b_m(h_n)\big]\psi
\right\|_{SO_\infty^3}
\nonumber
\\
&\quad\quad
=c_X
\sum_{j=0}^3\frac{1}{j!}
\left\|
D_\infty^j\left(
\big[b_m(\cdot+h_n)-b_m(h_n)\big]\psi
\right)
\right\|_{L^\infty(\R)}.
\label{eq:LO-convolution-SO-11}
\end{align}
%%%
For all $j\in\{0,1,2,3\}$, we have
%%%
\begin{align}
&
D_\infty^j\left(
\big[b_m(\cdot+h_n)-b_m(h_n)\big]\psi
\right)
\nonumber
\\
&\quad\quad
=\sum_{\nu=0}^j \binom{j}{\nu}
\left(D_\infty^\nu
\big[b_m(\cdot+h_n)-b_m(h_n)\big]
\right)
\left(D_\infty^{j-\nu}\psi\right)\!.
\label{eq:LO-convolution-SO-12}
\end{align}
%%%
It follows from the mean value theorem that
%%%
\begin{align}
\left\|
\big[b_m(\cdot+h_n)-b_m(h_n)\big]\chi_K
\right\|_{L^\infty(\R)}
&
=
\sup_{x\in K}\left|\int_{h_n}^{x+h_n}b_m'(t)\,dt\right|
\nonumber
\\
&
=
\sup_{x\in K}\left|
\int_{h_n}^{x+h_n}tb_m'(t)\,\frac{dt}{t}
\right|
\nonumber
\\
&
\le
\sup_{x\in K}\int_{I_n(x)}
\big|(D_\infty b_m)(t)\big|\,\frac{dt}{t}
\nonumber
\\
&
\le
\sup_{t\in[h_n-k,h_n+k]}\big|(D_\infty b_m)(t)\big|
\int_{h_n-k}^{h_n+k}\frac{dt}{t}
\nonumber
\\
&
\le
\ln\frac{h_n+k}{h_n-k}
\left\|D_\infty b_m\right\|_{L^\infty(\R)}.
\label{eq:LO-convolution-SO-13}
\end{align}
%%%
It is easy to see that for $x\in K$,
%%%
\begin{align}
&
\left(
D_\infty\big[b_m(\cdot+h_n)-b_m(h_n)\big]
\right)(x)
=
\frac{x}{x+h_n}
(D_\infty b_m)(x+h_n),
\label{eq:LO-convolution-SO-14}
\\
&\big(
D_\infty^2\big[b_m(\cdot+h_n)-b_m(h_n)\big]
\big)(x)
\nonumber
\\
&
\quad\quad=
\frac{x^2}{(x+h_n)^2}
(D_\infty^2 b_m)(x+h_n)
+
\frac{xh_n}{(x+h_n)^2}(D_\infty b_m)(x+h_n),
\label{eq:LO-convolution-SO-15}
\end{align}
%%%
and
\begin{align}
&\big(
D_\infty^3\big[b_m(\cdot+h_n)-b_m(h_n)\big]
\big)(x)
\nonumber\\
&\quad\quad=
\frac{x^3}{(x+h_n)^3}
(D_\infty^3 b_m)(x+h_n)
+
\frac{3x^2h_n}{(x+h_n)^3}
(D_\infty^2 b_m)(x+h_n)
\nonumber\\
&\quad\quad\quad
+
\frac{xh_n^2-x^2h_n}{(x+h_n)^3}
(D_\infty b_m)(x+h_n).
\label{eq:LO-convolution-SO-16}
\end{align}
%%%
It follows from \eqref{eq:LO-convolution-SO-14}--%
\eqref{eq:LO-convolution-SO-16}
that for all $n>N_2$,
%%%
\begin{align}
&
\left\|\big(
D_\infty\big[b_m(\cdot+h_n)-b_m(h_n)\big]
\big)\chi_K\right\|_{L^\infty(\R)}
\le
\frac{k}{h_n-k}
\|D_\infty b_m\|_{L^\infty(\R)},
\label{eq:LO-convolution-SO-17}
\\
&
\left\|\big(
D_\infty^2\big[b_m(\cdot+h_n)-b_m(h_n)\big]
\big)\chi_K\right\|_{L^\infty(\R)}
\nonumber
\\
&\quad\quad
\le
\frac{k^2}{(h_n-k)^2}
\left\|D_\infty^2 b_m\right\|_{L^\infty(\R)}
+
\frac{kh_n}{(h_n-k)^2}
\left\|D_\infty b_m\right\|_{L^\infty(\R)},
\label{eq:LO-convolution-SO-18}
\end{align}
%%%
and
%%%
\begin{align}
&
\left\|\big(
D_\infty^3\big[b_m(\cdot+h_n)-b_m(h_n)\big]
\big)\chi_K\right\|_{L^\infty(\R)}
\nonumber\\
&\quad\quad
\le
\frac{k^3}{(h_n-k)^3}
\left\|D_\infty^3 b_m\right\|_{L^\infty(\R)}
+
\frac{3k^2h_n}{(h_n-k)^3}
\left\|D_\infty^2 b_m\right\|_{L^\infty(\R)}
\nonumber\\
&\quad\quad\quad
+
\frac{kh_n^2+k^2h_n}{(h_n-k)^3}
\left\|D_\infty b_m\right\|_{L^\infty(\R)}.
\label{eq:LO-convolution-SO-19}
\end{align}
%%%
Since
\[
\max_{j\in\{0,1,2,3\}}
\left\|D_\infty^j\psi\right\|_{L^\infty(\R)}<\infty,
\]
it follows from
\eqref{eq:LO-convolution-SO-12}--\eqref{eq:LO-convolution-SO-13} and
\eqref{eq:LO-convolution-SO-17}--\eqref{eq:LO-convolution-SO-19} that
for all $j\in\{0,1,2,3\}$,
%%%
\begin{equation}\label{eq:LO-convolution-SO-20}
\lim_{n\to\infty}
\left\|D_\infty^j
\big(\big[b_m(\cdot+h_n)-b_m(h_n)\big]\psi\big)
\right\|_{L^\infty(\R)}=0.
\end{equation}
%%%
We deduce from \eqref{eq:LO-convolution-SO-11} and
\eqref{eq:LO-convolution-SO-20} that there exists $N_3\in\N$ such that
$N_3\ge N_2$ and for all $n>N_3$,
%%%
\begin{equation}\label{eq:LO-convolution-SO-21}
\left\|
\big[b_m(\cdot+h_n)-b_m(h_n)\big]\psi\right\|_{\cM_{X(\R)}}<\eps/4.
\end{equation}
%%%
Combining \eqref{eq:LO-convolution-SO-7}--\eqref{eq:LO-convolution-SO-10}
and \eqref{eq:LO-convolution-SO-21}, we see that for every
$f\in\cS_0(\R)$ and every $\eps>0$ there exists $N_3\in\N$ such that
for all $n>N_3$,
\[
\left\|\big(e_{h_n}W^0(b)e_{h_n}^{-1}-b(\xi)\big)f\right\|_{X(\R)}
<
\eps\|f\|_{X(\R)},
\]
whence for all $f\in\cS_0(\R)$,
\[
\lim_{n\to\infty}\left\|\big(
e_{h_n}W^0(b)e_{h_n}^{-1}I-b(\xi)I
\big)f\right\|_{X(\R)}=0.
\]
Since $\cS_0(\R)$ is dense in $X(\R)$ (see Theorem~\ref{th:density-S0}),
this equality immediately implies \eqref{eq:LO-convolution-SO-1}
in view of \cite[Lemma~1.4.1(ii)]{RSS11}, which completes the proof.
\end{proof}
%%%----------------------------------------------------------------------------
\subsection{Proof of Theorem~\ref{th:intersection-quotient-algebras}}
{Since} the function $e_0\equiv 1$ belongs to $\Phi$ and
$\Psi_{SO_{X(\R)}^\diamond}$, we see that the set of all constant
functions is contained in $\Phi$ and in
{$SO_{X(\R)}^\diamond$.}
Therefore
%%%
\begin{equation}\label{eq:intersection-quotient-algebras-2}
\mathcal{MO}^\pi(\C)\subset
\mathcal{MO}^\pi(\Phi)
\cap
\mathcal{CO}^\pi
{(SO_{X(\R)}^\diamond).}
\end{equation}
%%%

Let $A^\pi\in \mathcal{MO}^\pi(\Phi)\cap
\mathcal{CO}^\pi{(SO_{X(\R)}^\diamond)}$. Then
$A^\pi=[aI]^\pi=[W^0(b)]^\pi$, where $a\in\Phi$
{and $b\in SO_{X(\R)}^\diamond$}.
Therefore, there is an operator $K\in\cK(X(\R))$ such that
%%%
\begin{equation}\label{eq:intersection-quotient-algebras-3}
aI=W^0(b)+K.
\end{equation}
%%%
By Theorem~\ref{th:LO-convolution-SO}, for every $\xi\in M_\infty(SO^\diamond)$
there exists a sequence $\{h_n\}_{n\in\N}$ of positive numbers such that
$h_n\to+\infty$ as $n\to\infty$ and
%%%
\begin{equation}\label{eq:intersection-quotient-algebras-4}
\slim e_{h_n}W^0({b})e_{h_n}^{-1}I={b}(\xi)I.
\end{equation}
%%%
Equalities \eqref{eq:intersection-quotient-algebras-3}--%
\eqref{eq:intersection-quotient-algebras-4} and Lemma~\ref{le:LO-compact}
imply that
\[
aI
=
\slim e_{h_n}(aI)e_{h_n}^{-1}I
=
\slim e_{h_n}(W^0(b)+K)e_{h_n}^{-1}I
{=b(\xi)I.}
\]
Hence $[aI]^\pi=[{b}(\xi)I]^\pi\in\mathcal{MO}^\pi(\C)$ and
%%%
\begin{equation}\label{eq:intersection-quotient-algebras-5}
\mathcal{MO}^\pi(\Phi)
\cap
\mathcal{CO}^\pi
{(SO_{X(\R)}^\diamond)}
\subset\mathcal{MO}^\pi(\C).
\end{equation}
%%%
Combining \eqref{eq:intersection-quotient-algebras-2} and
\eqref{eq:intersection-quotient-algebras-5}, we arrive at
\eqref{eq:intersection-quotient-algebras-1}.
\qed
%%%----------------------------------------------------------------------------
{
\subsection*{Acknowledgment}
We would like to thank the anonymous referee for pointing out a gap
in the original version of the paper. To fill in this gap, we 
strengthened the hypotheses in the main result.
}
%%%----------------------------------------------------------------------------


\begin{thebibliography}{XX}
\bibitem{A79}
J. Anderson,
\textit{Extensions, restrictions, and representations of states on
$C^*$-algebras},
Trans. Amer. Math. Soc. \textbf{249} (1979), 303--329.

\bibitem{BBK04}
M. A. Bastos, A. Bravo, and Yu. I. Karlovich,
\textit{Convolution type operators with symbols generated by slowly
oscillating and piecewise continuous matrix functions},
%In: ``Operator Theoretical Methods and Applications to Mathematical Physics.
%The Erhard Meister Memorial Volume."
Oper. Theory Adv. Appl. \textbf{147} (2004), 151--174.

\bibitem{BFK06}
M. A. Bastos, C. A. Fernandes, and Yu. I. Karlovich,
\textit{$C^*$-algebras of integral operators with piecewise slowly
oscillating coefficients and shifts acting freely},
Integr. Equ. Oper. Theory \textbf{55} (2006), 19--67.

\bibitem{BS88}
C. Bennett and R. Sharpley,
\textit{Interpolation of Operators},
Academic Press, Boston, 1988.

\bibitem{BKS02}
A. B\"ottcher, Yu. I. Karlovich, and I. M. Spitkovsky,
\textit{Convolution Operators and Factorization of Almost Periodic Matrix Functions},
Birkh\"auser, Basel, 2002.

\bibitem{CF13}
D. Cruz-Uribe and A. Fiorenza,
\textit{Variable Lebesgue Spaces},
Birkh\"auser/Sprin\-ger, New York, 2013.

%\bibitem{DKILH14}
%I. De la Cruz-Rodr\'iguez, Yu. I. Karlovich, and I. Loreto-Hern\'andez,
%\textit{Commutators of convolution type operators with piecewise
%quasicontinuous data},
%Communications in Mathematical Analysis \textbf{17} (2014), 131--150.

\bibitem{D79}
R. V. Duduchava,
\textit{Integral Equations with Fixed Singularities},
Teubner, Leipzig, 1979.

\bibitem{FKK-AFA}
C. A. Fernandes, A. Yu. Karlovich, and Yu. I. Karlovich,
\textit{Noncompactness of Fourier convolution operators on Banach function
spaces},
Ann. Funct. Anal. AFA
{\textbf{10} (2019), 553--561.}

\bibitem{FKK-FS12}
C. A. Fernandes, A. Yu. Karlovich, and Yu. I. Karlovich,
\textit{Algebra of convolution type operators with continuous data on Banach
function spaces},
{Banach Center Publications \textbf{119} (2019), 157--171.}

\bibitem{FKK-ISAAC19}
C. A. Fernandes, A. Yu. Karlovich, and Yu. I. Karlovich,
\textit{Fourier convolution operators with symbols equivalent to zero
at infinity on Banach function spaces},
{Proceedings of ISAAC 2019, to appear.}
Preprint is available at arXiv:1909.13538 [math.FA].

\bibitem{F09}
G. B. Folland,
\textit{A Guide to Advanced Real Analysis},
The Mathematical Association of America,
Washington, DC, 2009.

\bibitem{G14}
L. Grafakos,
\textit{Classical Fourier Analysis}, 3rd ed.,
Springer, New York, 2014.

\bibitem{GK92}
I. Gohberg and N. Krupnik,
\textit{One-Dimensional Linear Singular Integral Equations}. Vol. II,
Birkh\"auser, Basel, 1992.

\bibitem{HKK06}
H. Hudzik, R. Kumar, and R. Kumar,
\textit{Matrix multiplication operators on Banach function spaces},
Proc. Indian Acad. Sci., Math. Sci. \textbf{116} (2006), 71--81.

\bibitem{KR97}
R. V. Kadison and J. R Ringrose,
\textit{Fundamentals of the Theory of Operator Algebras.
Vol. I: Elementary Theory}, 2nd ed.
American Mathematical Society, Providence, RI, 1997.

\bibitem{K09}
E. Kaniuth,
\textit{A Course in Commutative Banach Algebras},
Springer, New York, 2009.

\bibitem{K15a}
A. Yu. Karlovich,
\textit{Maximally modulated singular integral operators and their applications
to pseudodifferential operators on Banach function spaces},
%In: ``Function spaces in analysis. Proceedings of the 7th conference on
%function spaces, Southern Illinois University at Edwardsville, IL, USA,
%May 20--24, 2014".
Contemp. Math. \textbf{645} (2015), 165--178.

\bibitem{K15b}
A. Yu. Karlovich,
\textit{Banach algebra of the Fourier multipliers on weighted Banach function spaces},
Concr. Oper. \textbf{2} (2015), 27--36.

\bibitem{K15c}
A. Yu. Karlovich,
\textit{Commutators of convolution type operators on some Banach function spaces},
Ann. Funct. Anal. AFA \textbf{6} (2015), 191--205.

\bibitem{K15d}
A. Yu. Karlovich,
\textit{The Stechkin inequality for Fourier multipliers on variable Lebesgue spaces},
Math. Inequal. Appl. \textbf{18} (2015), 1473--1481.

\bibitem{KS19}
A. Karlovich and E. Shargorodsky,
\textit{When does the norm of a Fourier multiplier dominate its $L^\infty$ norm?}
Proc. London Math. Soc. \textbf{118} (2019), 901--941.

\bibitem{KS14}
A. Yu. Karlovich and I. M. Spitkovsky,
\textit{The Cauchy singular integral operator on weighted variable Lebesgue spaces},
Oper. Theory Adv. Appl. \textbf{236} (2014), 275--291.

\bibitem{K17}
Yu. I. Karlovich,
\textit{Algebras of convolution-type operators with piecewise slowly oscillating
data on weighted Lebesgue spaces},
Mediterr. J. Math. \textbf{14} (2017), paper no. 182, 20 p.

\bibitem{KILH12}
Yu. I. Karlovich and I. Loreto Hern\'andez,
\textit{Algebras of convolution type operators with piecewise
slowly oscillating data. I: Local and structural study},
Integr. Equ. Oper. Theory \textbf{74} (2012), 377--415.

\bibitem{KILH13a}
Yu. I. Karlovich and I. Loreto Hern\'andez,
\textit{On convolution type operators with piecewise slowly oscillating data},
%In: ``Operator Theory, Pseudo-Differential Equations, and
%Mathematical Physics. The Vladimir Rabinovich Anniversary Volume",
Oper. Theory Adv. Appl. \textbf{228} (2013), 185--207.

\bibitem{RRS04}
V. Rabinovich, S. Roch, and B. Silbermann,
\textit{Limit Operators and Their Applications in Operator Theory},
Birkh\"auser, Basel, 2004.

\bibitem{RS80}
M. Reed and B. Simon,
\textit{Methods of Modern Mathematical Physics. I: Functional Analysis},
Academic Press, New York, 1980.

\bibitem{RSS11}
S. Roch, P. A. Santos, and B. Silbermann,
\textit{Non-Commutative Gelfand Theories. A Tool-kit for Operator Theorists
and Numerical Analysts},
Springer, Berlin, 2011.

\bibitem{S07}
I. B. Simonenko,
\textit{Local Method in the Theory of Shift Invariant Operators and Their
Envelopes},
Rostov Univ. Press, Rostov on Don, 2007 (in Russian).

\bibitem{SM86}
I. B. Simonenko and Chin Ngok Min,
\textit{Local Method in the Theory of
One-Dimensional Singular Integral Equations with Piecewise Continuous
Coefficients. Noetherity},
Rostov Univ. Press, Rostov on Don, 1986 (in Russian).
\end{thebibliography}
\end{document}